\documentclass{amsart}
\usepackage{amssymb,amsmath,latexsym,times,xcolor,hyperref,tikz}

\hypersetup{colorlinks=true, linkcolor=blue, citecolor=blue}

\numberwithin{equation}{section}

\theoremstyle{plain}
\newtheorem{thm}{Theorem}[section]

\newtheorem{cor}[thm]{Corollary}

\newtheorem{lemma}[thm]{Lemma}

\newtheorem*{lem*}{Lemma}
\newtheorem*{thm*}{Theorem}
\newtheorem*{cor*}{Corollary}

\theoremstyle{definition}
\newtheorem{example}{Example}

\newcommand{\meet}{\land}
\newcommand{\join}{\lor}
\DeclareMathOperator{\cl}{cl}

\newcommand{\frp}{\mathbin{\Box}}

\title[Free products and extremal matroids]{Free products, extremal
  matroids, and a generalization of perfect matroid designs}
\author[J.~Bonin]{Joseph E.~Bonin} \address
{Department of Mathematics\\ The George Washington University\\
  Washington, D.C.\ 20052, USA} \email {jbonin@gwu.edu} \date{\today}
\subjclass{Primary: 05B35}

\keywords{Matroid, free product of matroids, polytope of matroids,
  extremal matroid}

\begin{document}

\begin{abstract}
  Extremal matroids are those that yield vertices in the polytope of
  matroids.  We show how to get the point, in the appropriate polytope
  of matroids, for the free product $M\frp M'$ of matroids $M$ and
  $M'$ from the points for $M$ and $M'$ in their polytopes of
  matroids.  With this we show that if $M\frp M'$ is extremal, then
  $M$ and $M'$ are extremal.  The converse is false.  We identify a
  large class of matroids that includes perfect matroid designs and
  sparse paving matroids, and we find sufficient conditions under
  which free products of extremal matroids in this class are extremal.
\end{abstract}

\maketitle

\section{Introduction}\label{sec:intro}

For a matroid $M$, its \emph{cyclic flats} are the flats $F$ that are
unions of circuits; equivalently, the restriction $M|F$ has no
coloops.  The lattice $\mathcal{Z}(M)$ of cyclic flats of $M$, ordered
by inclusion, plays many important roles.  For instance, the structure
of the chains in $\mathcal{Z}(M)$ that include the least and greatest
cyclic flats of $M$ determines all valuative invariants of $M$.  A
matroid $N$ is \emph{nested} (also called a Schubert matroid) if
$\mathcal{Z}(N)$ is a chain. Derksen and Fink \cite{DerksenFink}
showed that, in terms of the indicator functions of matroid base
polytopes, the matroid base polytope of any matroid can be uniquely
written as a linear combination, with integer coefficients, of the
matroid base polytopes of nested matroids of the same rank on the same
set.

The indicator function of the matroid base polytope is not a matroid
invariant: it is different on different matroids, even if they are
isomorphic.  The importance of matroid invariants motivates studying
the symmetrized indicator function $\mathbb{I}(\mathcal{P}(M))$ of the
matroid base polytope $\mathcal{P}(M)$ of a matroid $M$.  We can apply
this matroid invariant to a matroid or to an unlabeled matroid (i.e.,
an isomorphism class of matroids).  Let $N_1,N_2,\ldots,N_k$ be the
unlabeled rank-$r$ nested matroids on $n$ elements.  Derksen and
Fink's result extends to $\mathbb{I}(\mathcal{P}(M))$: for any
(labeled or unlabeled) rank-$r$ matroid $M$ on $n$ elements, there is
a unique vector $\boldsymbol{v}_M=(a_1,a_2,\ldots,a_k)$ of integers
for which
$$\mathbb{I}(\mathcal{P}(M))=
a_1\,\mathbb{I}(\mathcal{P}(N_1))+
a_2\,\mathbb{I}(\mathcal{P}(N_2))+\cdots
+a_k\,\mathbb{I}(\mathcal{P}(N_k)).
$$
The \emph{polytope $\Omega_{r,n}$ of unlabeled matroids}, which
Ferroni and Fink \cite{LuisAlex} introduced, is the convex hull of the
set of vectors $\boldsymbol{v}_M$ as $M$ ranges over all rank-$r$
(labeled or unlabeled) matroids on $n$ elements.  They showed that
$\boldsymbol{v}_M$ is a vertex of $\Omega_{r,n}$ if and only if the
set of matroids $M'$ for which $\boldsymbol{v}_{M'} =\boldsymbol{v}_M$
can be characterized by maximizing a sequence of valuative invariants
(see Theorem \ref{thm:vertbyinv}).  Such matroids are called
\emph{extremal matroids}.

Let $M$ and $M'$ be matroids on disjoint ground sets.  Ferroni and
Fink \cite{LuisAlex} showed that if the direct sum $M\oplus M'$ is
extremal, then $M$ and $M'$ are extremal, and they conjectured that the
converse holds.  The direct sum $M\oplus M'$ is one of many matroids
$K$ on $E(M)\cup E(M')$ for which $K|E(M)=M$ and $K/E(M)=M'$.  Among
such matroids, $M\oplus M'$ is least in the weak order.  (Recall that
for matroids $M_1$ and $M_2$ on a set $E$, we have $M_1\leq M_2$ in
the \emph{weak order}, or $M_2$ is \emph{freer} than $M_1$, if all
independent sets of $M_1$ are independent in $M_2$.)  There is also a
freest such matroid $K$: it is the \emph{free product} $M\frp M'$ of
$M$ and $M'$, which was introduced by Crapo and Schmitt
\cite{fpm}. The independent sets of $M\frp M'$ are the sets $I\cup J$
where $I$ is an independent set of $M$, $J\subseteq E(M')$, and
$|I\cup J|\leq r(M)+r_{M'}(J)$.
  
The fact that free products of nested matroids are nested makes it
natural to explore the interplay between the free product and the
polytope of matroids, and especially extremal matroids.  That is the
theme of this paper.

If we know the expressions for the symmetrized indicator functions of
the matroid base polytopes of $M$ and $M'$ in terms of those of nested
matroids, expressing the counterpart for the direct sum $M\oplus M'$
is fairly complicated.  In contrast, it is strikingly simple for the
free product $M\frp M'$.  In our first main result, Theorem
\ref{thm:freeproductindicator}, we show that if
$$\mathbb{I}(\mathcal{P}(M))=\sum_{i\in S}a_i\,
\mathbb{I}(\mathcal{P}(N_i)) \qquad \text{ and } \qquad
\mathbb{I}(\mathcal{P}(M'))=\sum_{j\in T}b_j\,
\mathbb{I}(\mathcal{P}(N'_j))$$ then
$$\mathbb{I}(\mathcal{P}(M\frp M'))=\sum_{i\in S,j\in T}
a_ib_j\, \mathbb{I}(\mathcal{P}(N_i\frp N'_j)),$$ where $N_i$ and
$N'_j$ range over the unlabeled nested matroids whose ranks and sizes
match $M$ and $M'$, respectively.

Theorem \ref{thm:freeproductindicator} is the key to our second main
result, Theorem \ref{thm:freeextremalcomponents}: if $M\frp M'$ is
extremal, then so are $M$ and $M'$.  This is a counterpart of the
result of Ferroni and Fink that we mentioned above.

The counterpart, for free products, of Ferroni and Fink's conjecture
for direct sums is false by the following observation.  Uniform
matroids are extremal, and iterated free products of uniform matroids
yield all nested matroids, but while some nested matroids are
extremal, not all are.  Thus, free products of extremal matroids can
be, but are not always, extremal.  In Section \ref{sec:SD}, we
identify sufficient conditions under which free products of extremal
matroids are extremal.  These results concern a large class of
matroids, semi-designs, that we introduce and to which we turn.

Semi-designs properly generalize perfect matroid designs (matroids in
which all flats of the same rank have the same size).  For a matroid
$M$, let $R_{\mathcal{Z}(M)}$ (or $R_{\mathcal{Z}}$ if there is no
ambiguity) be $\{r(A)\,:\,A\in\mathcal{Z}(M)\}$.  Instead of requiring
that all flats of the same rank have the same size (as in perfect
matroid designs), property (SD1) below imposes this size requirement
only on cyclic flats.  Property (SD2), which says that cyclic flats of
larger rank have larger nullity, allows, but does not require,
containment among any cyclic flats of different ranks.  We say that a
matroid $M$ is a \emph{semi-design} if there are integers $c_i$, for
each $i\in R_{\mathcal{Z}}$, such that
\begin{itemize}
\item[(SD1)] $|A|=c_{r(A)}$ for all $A\in\mathcal{Z}(M)$ and
\item[(SD2)] if $i,j\in R_{\mathcal{Z}}$ and $i>j$, then
  $c_i-i> c_j-j$.
\end{itemize}
Perfect matroid designs are semi-designs, as are all sparse paving
matroids.  Free products and duals of semi-designs are semi-designs.
The third main result of the paper, Theorem \ref{thm:frpbeyondPMD},
gives sufficient conditions for free products of extremal semi-designs
to be extremal.  Theorem \ref{thm:frpbeyondPMD} is applied to free
products of affine and projective planes in Corollary \ref{cor:pps}
and to nested matroids in Corollary \ref{cor:nested}.  In the other
direction, Theorem \ref{thm:notext} shows that a nested matroid is not
extremal if it has simultaneous violations of the inequalities in
Corollary \ref{cor:nested}.  Section \ref{sec:examples} treats three
infinite families of extremal semi-designs that satisfy the conditions
in Theorem \ref{thm:frpbeyondPMD} and are not derived from perfect
matroid designs.

The necessary background is reviewed in the following section.

\section{Background}\label{sec:background}

For general matroid background, we refer to \cite{oxley}; we follow
the notation in that source.  Recall that for a matroid $M$ and set
$X\subseteq E(M)$, the \emph{nullity} $\eta(X)$ of $X$ is $|X|-r(X)$.
The nullity $\eta(M)$ of $M$ is $\eta(E(M))$.  From the submodular
inequality for the rank function, we immediately get the supermodular
inequality for nullity, that is, for all $X,Y\subseteq E(M)$,
$$\eta(X\cup Y)+\eta(X\cap Y)\geq \eta(X)+\eta(Y).$$
If $\eta(M)>0$, the \emph{girth} $g(M)$ of $M$ is the smallest size of
a circuit of $M$.  If $r(M)>0$, the \emph{cogirth} $g(M^*)$ of $M$ is
the smallest size of a cocircuit of $M$.

A matroid is labeled: the ground set is crucial, as is, for instance,
knowing which of its subsets are independent.  When we care about
matroids only up to isomorphism, we may call an isomorphism class of
matroids an \emph{unlabeled matroid}.

We will use the M\"obius function $\mu:L\times L\to \mathbb{Z}$ of a
lattice $L$.  Knowing the recursive definition suffices: (i)
$\mu(a,b)=0$ if $a\not\leq b$, (ii) $\mu(a,a)=1$ for all $a$, and
(iii) if $a<b$, then $\mu(a,b)=-\sum_{x\,:a<x\leq b}\mu(x,b)$.  Those
desiring more information might consult, e.g., Rota \cite{rota} or
Stanley \cite{ec1}.

We use $\mathbb{N}$ for the set of positive integers, $[n]$ for the
set $\{1,2,\ldots,n\}$, and $[0,n]$ for $[n]\cup\{0\}$.

\subsection{Cyclic flats}\label{subsec:cycfl}

As stated in Section \ref{sec:intro}, a \emph{cyclic flat} of a
matroid $M$ is a flat $F$ that is a (possibly empty) union of
circuits, or, equivalently, $M|F$ has no coloops.  The set
$\mathcal{Z}(M)$ of cyclic flats of $M$ is a lattice under inclusion:
for $A,B\in \mathcal{Z}(M)$, the join $A\join B$ is $\cl(A\cup B)$ and
the meet $A\meet B$ is $(A\cap B)-U$ where $U$ is the set of coloops
of $M|(A\cap B)$.  It is straightforward to show that, for all
$X\subseteq E(M)$,
\begin{equation}\label{eq:cftorank}
  r(X)=\min\{r(F)+|X-F|\,:\,F\in\mathcal{Z}(M)\},
\end{equation}
so $M$ is determined by its ground set $E(M)$ along with the cyclic
flats of $M$ and their ranks, that is, by the set
$\{E(M)\}\cup\{(F,r(F))\,:\,F\in \mathcal{Z}(M)\}$.  We will use the
following result from \cite{sims, cycflats}, which characterizes
matroids from the perspective of cyclic flats and their ranks.

\begin{thm}\label{thm:axioms}
  For a collection $\mathcal{Z}$ of subsets of a set $E$ and a
  function $r:\mathcal{Z}\to \mathbb{Z}$, there is a matroid $M$ on
  $E$ with $\mathcal{Z}(M)=\mathcal{Z}$ and $r_M(X) =r(X)$ for all
  $X\in\mathcal{Z}$ if and only if
  \begin{itemize}
  \item[(Z0)] $(\mathcal{Z},\subseteq)$ is a lattice,
  \item[(Z1)] $r(0_{\mathcal{Z}})=0$, where $0_{\mathcal{Z}}$ is the
    least set in $\mathcal{Z}$,
  \item[(Z2)] $0<r(Y)-r(X)<|Y-X|$ for all sets $X,Y$ in $\mathcal{Z}$
    with $X\subsetneq Y$, and
  \item[(Z3)] for all pairs of sets $X,Y$ in $\mathcal{Z}$ (or,
    equivalently, just incomparable sets in $\mathcal{Z}$),
    $$r(X\join Y) + r(X\meet Y) + |(X\cap Y) - (X\meet Y)|\leq
    r(X)+r(Y).$$
  \end{itemize}
\end{thm}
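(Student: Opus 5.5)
Necessity is routine; sufficiency is the real content. The plan is to define a rank function on all of $2^E$ by the formula suggested by \eqref{eq:cftorank},
$$\rho(X)=\min\{r(F)+|X-F|\,:\,F\in\mathcal{Z}\},$$
show that $\rho$ satisfies the rank axioms, and then show that the cyclic flats of the resulting matroid are exactly the sets in $\mathcal{Z}$, with $\rho=r$ on them.

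For necessity, assume $M$ is a matroid with $\mathcal{Z}(M)=\mathcal{Z}$. Then (Z0) is the lattice structure recalled in Section~\ref{subsec:cycfl}. For (Z1), the least cyclic flat is the closure of the empty set, which has rank $0$. For (Z2), take $X\subsetneq Y$ cyclic flats. Then $r(X)<r(Y)$ because $X$ is a flat properly contained in $Y$. Also $r(Y)-r(X)<|Y-X|$, since $r(Y)-r(X)=|Y-X|$ would make every element of $Y-X$ a coloop of $M|Y$. For (Z3), write $X\meet Y=(X\cap Y)-U$, where $U$ is the set of coloops of $M|(X\cap Y)$. Then $r(X\cap Y)=r(X\meet Y)+|U|$, and submodularity applied to $X$ and $Y$, together with $r(X\join Y)=r(X\cup Y)$, gives (Z3).

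For sufficiency, define $\rho$ as above. Normalization and monotonicity are easy: $\rho(\emptyset)=0$ by taking $F=0_{\mathcal{Z}}$ and using (Z1), and $\rho$ is monotone with $\rho(X)\le\rho(X\cup e)\le\rho(X)+1$. The key step is submodularity. Take $X,Y\subseteq E$, and let $F$ and $G$ attain the minima for $X$ and $Y$. We use $F\join G$ as a witness for $X\cup Y$ and $F\meet G$ as a witness for $X\cap Y$, so it suffices to prove
$$r(F\join G)+|(X\cup Y)-(F\join G)|+r(F\meet G)+|(X\cap Y)-(F\meet G)|\le r(F)+|X-F|+r(G)+|Y-G|.$$
We handle this by (Z3) plus counting. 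Since $F\cup G\subseteq F\join G$, we have $|(X\cup Y)-(F\join G)|\le|(X\cup Y)-(F\cup G)|$. We also have $|(X\cap Y)-(F\meet G)|\le |(X\cap Y)-(F\cap G)|+|(F\cap G)-(F\meet G)|$. The elementary identity $|(X\cup Y)-(F\cup G)|+|(X\cap Y)-(F\cap G)|\le |X-F|+|Y-G|$ then combines with (Z3) to finish. If $F$ and $G$ are comparable, (Z3) is trivial, so the equivalence noted in (Z3) causes no trouble.

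The main obstacle is identifying the cyclic flats of $M=(E,\rho)$. First, $\rho(Z)=r(Z)$ for $Z\in\mathcal{Z}$. For $F\in\mathcal{Z}$ we need $r(F)+|Z-F|\ge r(Z)$. The case $F\supseteq Z$ follows from (Z2), and the case $F\subseteq Z$ from (Z2) as well. For general $F$, replace $F$ by $F\meet Z$ and use (Z3) with $F\join Z\supseteq Z$ together with monotonicity from (Z2). Next, each $Z\in\mathcal{Z}$ is closed: adding $e\notin Z$ cannot be absorbed by any $F$ with smaller value, by the strict inequality $r(Y)-r(X)>0$ in (Z2). Each $Z$ is also cyclic: deleting $e\in Z$ does not lower the rank, by the strict inequality $r(Y)-r(X)<|Y-X|$ applied to the witnesses below $Z$. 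Conversely, let $C$ be a cyclic flat of $M$ and let $F$ attain $\rho(C)$. We aim to show $C=F$. If $C-F$ were nonempty, its elements would be coloops of $M|C$. If $F\not\subseteq C$, closedness of $C$ forces $F\subseteq C$, since $\rho(C\cup F)\le r(F)+|C-F|=\rho(C)$. This last implication is the step we would check most carefully.
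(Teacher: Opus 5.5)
Your proof is correct. You take Equation~\eqref{eq:cftorank} as the definition of $\rho$, derive submodularity from (Z3) applied to optimal witnesses $F,G$, and then use the same (Z3) reduction to $F\join Z$ and $F\meet Z$ together with the two strict inequalities in (Z2); that reduction in fact shows $Z$ is the unique minimizer for $\rho(Z)$, which gives closedness and the absence of coloops at once. All of this checks out, and so does the step you flagged: since $|(C\cup F)-F|=|C-F|$, you get $F\subseteq\cl(C)=C$.

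The paper itself gives no proof of this theorem; it quotes it from \cite{sims, cycflats}. Your rank-function route is the one suggested by Equation~\eqref{eq:cftorank}. The only detail to add is that $r\geq 0$ on $\mathcal{Z}$, which you need for $\rho(\emptyset)=0$, follows from (Z1) and (Z2).
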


From Equation (\ref{eq:cftorank}) we see that, for matroids $M$ and
$N$, a function $\phi:E(M)\to E(N)$ is an isomorphism of $M$ onto $N$
if and only if $\phi$ is a bijection that induces a bijection from
$\mathcal{Z}(M)$ onto $\mathcal{Z}(N)$, and $r_M(A)=r_N(\phi(A))$ for
all $A\in\mathcal{Z}(M)$.

Remarks at the end of \cite[Section 3]{cycflats} point the way to
using Theorem \ref{thm:axioms} to generalize the basic operation of
circuit-hyperplane relaxation.  The next result develops this further.
To be clear, in this result, for $\mathcal{Z}$ to be a sublattice of
$\mathcal{Z}(M)$, the meet and join operations must be the same:
$X\meet_{\mathcal{Z}} Y=X\meet_{\mathcal{Z}(M)} Y$ and
$X\join_{\mathcal{Z}} Y=X\join_{\mathcal{Z}(M)} Y$ for all
$X,Y\in \mathcal{Z}$.

\begin{cor}\label{cor:generalrelax}
  For a matroid $M$, let $\mathcal{Z}$ be a sublattice of
  $\mathcal{Z}(M)$ that includes the least cyclic flat of $M$.  Then
  the pair $(\mathcal{Z},r)$, where $r(A)=r_M(A)$ for all
  $A\in\mathcal{Z}$, satisfies properties \emph{(Z0)--(Z3)} in Theorem
  \ref{thm:axioms} and so defines a matroid on $E(M)$.
\end{cor}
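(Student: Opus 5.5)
We check (Z0)--(Z3) one at a time for $(\mathcal{Z},r)$, using Theorem \ref{thm:axioms} applied to $M$ itself, which tells us that $(\mathcal{Z}(M),r_M)$ already satisfies all four properties. The key observation is that each property either transfers directly to a subcollection or involves only lattice operations, and these are the same in $\mathcal{Z}$ as in $\mathcal{Z}(M)$ because $\mathcal{Z}$ is a sublattice.

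For (Z0), $\mathcal{Z}$ is a lattice under inclusion, since it is a sublattice of $\mathcal{Z}(M)$. Its order is the restriction of inclusion, and its meets and joins are those of $\mathcal{Z}(M)$. For (Z1), the least cyclic flat $0_{\mathcal{Z}(M)}$ of $M$ lies in $\mathcal{Z}$ by hypothesis, and it is below every member of $\mathcal{Z}$. Hence $0_{\mathcal{Z}}=0_{\mathcal{Z}(M)}$, and $r(0_{\mathcal{Z}})=r_M(0_{\mathcal{Z}(M)})=0$, because (Z1) holds for $M$ (equivalently, the least cyclic flat is the closure of the empty set). For (Z2), any $X\subsetneq Y$ in $\mathcal{Z}$ are also in $\mathcal{Z}(M)$. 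Since $r=r_M$ on $\mathcal{Z}$, the inequalities $0<r(Y)-r(X)<|Y-X|$ are exactly those that (Z2) gives for $M$.

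The step needing care is (Z3). Its left side involves $X\join Y$, $X\meet Y$ and the set $(X\cap Y)-(X\meet Y)$, and we must make sure these are computed in $\mathcal{Z}$. Here the sublattice hypothesis is essential. By the convention stated just before the corollary, $X\join_{\mathcal{Z}}Y=X\join_{\mathcal{Z}(M)}Y$ and $X\meet_{\mathcal{Z}}Y=X\meet_{\mathcal{Z}(M)}Y$. Both lie in $\mathcal{Z}$, so $r$ is defined on them and agrees with $r_M$. The inequality for $X,Y\in\mathcal{Z}$ is therefore literally the (Z3) inequality for the pair $X,Y$ in $\mathcal{Z}(M)$, which holds. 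Without the sublattice hypothesis, the meet in $\mathcal{Z}$ could be a smaller set, which would change the term $|(X\cap Y)-(X\meet Y)|$. This is why the hypothesis cannot be weakened to $\mathcal{Z}$ merely being a lattice under inclusion.

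Since (Z0)--(Z3) hold, Theorem \ref{thm:axioms} gives a matroid on $E(M)$ whose lattice of cyclic flats is $\mathcal{Z}$ and whose rank function agrees with $r_M$ on $\mathcal{Z}$.
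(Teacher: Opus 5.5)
Your verification is correct and is essentially the argument the paper has in mind: the paper gives no separate proof and treats the corollary as immediate from Theorem \ref{thm:axioms} applied to $M$. The stated convention that $\mathcal{Z}$ has the same meets and joins as $\mathcal{Z}(M)$ is exactly what lets (Z3) carry over unchanged, with (Z0)--(Z2) (including $0_{\mathcal{Z}}=0_{\mathcal{Z}(M)}$ for (Z1)) transferring for the reasons you give.
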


The next lemma follows from the fact that a set is a union of circuits
in a matroid $M$ if and only if its complement is an intersection of
hyperplanes in the dual matroid $M^*$.

\begin{lemma}\label{lem:cfdual}
  For a matroid $M$, we have
  $\mathcal{Z}(M^*) =\{E(M)-A\,:\,A\in\mathcal{Z}(M)\}$.
\end{lemma}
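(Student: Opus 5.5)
We will prove Lemma \ref{lem:cfdual} by reducing it to two standard duality facts. A set $A\subseteq E(M)$ is a cyclic flat of $M$ exactly when $A$ is both a flat and a union of circuits. We will check that complementation swaps these two properties between $M$ and $M^*$. Setting $E=E(M)$, the claim will follow once we show that, for every $A\subseteq E$:
\begin{itemize}
\item[(i)] $A$ is a union of circuits of $M$ if and only if $E-A$ is a flat of $M^*$; and
\item[(ii)] $A$ is a flat of $M$ if and only if $E-A$ is a union of circuits of $M^*$.
\end{itemize}
Applying (i) and (ii) to $A$ gives $A\in\mathcal{Z}(M)$ if and only if $E-A\in\mathcal{Z}(M^*)$. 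Since $A\mapsto E-A$ is an involution on subsets of $E$, this yields $\mathcal{Z}(M^*)=\{E-A\,:\,A\in\mathcal{Z}(M)\}$.

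For (i), we use the characterization that $A$ is a union of circuits of $M$ if and only if $M|A$ has no coloops. We then use $(M|A)^*=M^*/(E-A)$. An element $e\in A$ is a coloop of $M|A$ exactly when $e$ is a loop of $M^*/(E-A)$, that is, when $e\in\cl_{M^*}(E-A)$. So $M|A$ has no coloops if and only if $\cl_{M^*}(E-A)=E-A$, which says $E-A$ is a flat of $M^*$.

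Statement (ii) is (i) applied to the matroid $M^*$ and the set $E-A$, using $(M^*)^*=M$. Alternatively, one can argue directly: flats are intersections of hyperplanes, and hyperplanes of $M$ are complements of circuits of $M^*$. This matches the remark preceding the lemma.

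We expect the only point needing care to be the identification of coloops of $M|A$ with loops of $M^*/(E-A)$. If preferred, this can be checked by a rank computation. The element $e$ is a coloop of $M|A$ iff $r(A-e)=r(A)-1$. Using $r^*(X)=|X|-r(M)+r(E-X)$, this is equivalent to $r^*((E-A)\cup e)=r^*(E-A)$. The edge cases $A=\emptyset$ and $A=E$ are covered by these equivalences without separate treatment.
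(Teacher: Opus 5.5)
Your proof is correct and takes essentially the paper's approach: the paper derives the lemma directly from your fact (i), namely that a set is a union of circuits of $M$ exactly when its complement is an intersection of hyperplanes (that is, a flat) of $M^*$, applied to both $M$ and $M^*$. The paper takes (i) from the circuit--hyperplane complementation between $M$ and $M^*$. Your justification instead identifies coloops of $M|A$ with loops of $M^*/(E-A)$ and checks this by a rank computation, which is equally valid.
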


Thus, $\mathcal{Z}(M^*)$ is isomorphic to the order dual of
$\mathcal{Z}(M)$.  The next lemma is well known and easy to prove.

\begin{lemma}\label{lem:cyclicflatsminors}
  Let $X$ and $Y$ be cyclic flats of a matroid $M$ for which
  $X\subset Y$. A subset $F$ of $Y-X$ is a cyclic flat of the minor
  $M|Y/X$ if and only if $F\cup X$ is a cyclic flat of $M$.
\end{lemma}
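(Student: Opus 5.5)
The key observation is that every cyclic flat of $M$ lying between $X$ and $Y$ is a cyclic flat of $M|Y$, so the question reduces to relating cyclic flats of a restriction and of a contraction. I will prove the lemma in two steps: first for the restriction $M|Y$, then for the contraction $(M|Y)/X$. Set $N=M|Y$.

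\emph{Step 1 (restriction).} I will show that a subset $G$ of $Y$ is a cyclic flat of $N$ if and only if it is a cyclic flat of $M$. Being a union of circuits is the same in $N$ and in $M$, since the circuits of $N$ are exactly the circuits of $M$ contained in $Y$. For flatness, suppose $G\subseteq Y$ is a flat of $N$ and a union of circuits. Then $\cl_M(G)\subseteq \cl_M(Y)=Y$, because $Y$ is a flat of $M$. Hence $\cl_M(G)=\cl_N(G)=G$. The converse direction is immediate.

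\emph{Step 2 (contraction).} Now $X$ is a cyclic flat of $N$. For $F\subseteq Y-X$, I will check the two defining properties of a cyclic flat of $N/X$ separately.

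Flatness: $F$ is a flat of $N/X$ if and only if $F\cup X$ is a flat of $N$. This is standard, since $\cl_{N/X}(F)=\cl_N(F\cup X)-X$.

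Cyclicity: I will use the characterization that a set is a union of circuits exactly when it contains no coloop of the restriction to it. An element $e\in F$ is a coloop of $(N/X)|F=(N|(F\cup X))/X$ if and only if $r_N(F\cup X-e)<r_N(F\cup X)$. That is precisely the condition for $e$ to be a coloop of $N|(F\cup X)$. The elements of $X$ cause no trouble: $X$ is a union of circuits of $N$, and those circuits lie inside $F\cup X$, so no element of $X$ is a coloop of $N|(F\cup X)$. Therefore $N|(F\cup X)$ has no coloops if and only if $(N/X)|F$ has no coloops.

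Combining the two properties with Step 1 yields the lemma.

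There is no real obstacle here. The only point needing care is the coloop comparison in Step 2, in particular remembering that elements of $X$ must be excluded as coloops, which uses that $X$ is cyclic. An alternative route for the cyclic part would be to apply Lemma \ref{lem:cfdual} to the flat part after dualizing, but the direct rank argument is simpler.
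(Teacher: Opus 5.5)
Your proof is correct and complete; the paper gives no proof of this lemma, only calling it well known and easy, and your argument is the standard verification it alludes to. You first pass to $N=M|Y$ using that $Y$ is a flat, then use $\cl_{N/X}(F)=\cl_N(F\cup X)-X$ for flatness and compare coloops of $(N/X)|F=(N|(F\cup X))/X$ with those of $N|(F\cup X)$ for cyclicity. As a minor observation, your argument only uses that $Y$ is a flat and $X$ is a union of circuits; neither the flatness of $X$ nor the cyclicity of $Y$ is needed.
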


\subsection{Principal extensions and transversal matroids}\label{subsec:petran}
Principal extension is the operation that is used to extend a matroid
by adding a point freely to the flat that is spanned by a particular
set.  For a matroid $M$, a subset $X$ of $E(M)$, and an element
$e\notin E(M)$, define $r':2^{E(M)\cup \{e\}}\to \mathbb{Z}$ by, for
all $Y\subseteq E(M)$, setting $r'(Y)=r_M(Y)$ and
$$r'(Y\cup \{e\})=
\begin{cases}
  r_M(Y), & \text{if } X\subseteq\cl_M(Y),\\
  r_M(Y)+1, & \text{otherwise.}
\end{cases}$$
It is easy to check that $r'$ is the rank function of a
matroid on $E(M)\cup \{e\}$.  This matroid, which is denoted $M+_X e$,  is
the \emph{principal extension} of $M$ in which $e$ has been
\emph{added freely} to $X$.  Observe that $M+_X e = M+_{\cl(X)} e$ and
$\cl(X)\cup \{e\}$ is a flat of $M+_X e$.  Also, for $X,Y\subseteq E(M)$
and $e,f\not\in E(M)$, we have $(M+_X e)+_Y f=(M+_Y f)+_X e$, so the
order in which we apply a sequence of principal extensions to subsets of
$E(M)$ does not matter.  The \emph{free extension} of $M$ is
$M+_{E(M)} e$.

A \emph{set system} on a set $E$ is an indexed family
$\mathcal{A}=(A_1,A_2,\ldots,A_r)$ of (not necessarily different)
subsets of $E$.  A \emph{partial transversal} of $\mathcal{A}$ is a
subset $I$ of $E$ for which there is an injection
$\phi:I\rightarrow [r]$ with $x\in A_{\phi(x)}$ for all $x\in I$.  As
Edmonds and Fulkerson~\cite{ef} proved, the partial transversals of a
set system $\mathcal{A}$ on $E$ are the independent sets of a matroid
on $E$; we call $\mathcal{A}$ a \emph{presentation} of this
\emph{transversal matroid} $M[\mathcal{A}]$.  A transversal matroid is
\emph{fundamental} (or \emph{principal}) if for some presentation
$(A_1,A_2,\ldots,A_r)$ and each $i\in [r]$, some element in $A_i$ is
in no $A_j$ with $j\in [r]-\{i\}$.  The lemma below is well known.

\begin{lemma}\label{lem:transcfspan}
  If $F$ is a cyclic flat of $M[(A_1,A_2,\ldots,A_r)]$, then
  $r(F)=|\{i\,:\,F\cap A_i\ne \emptyset\}|$.
\end{lemma}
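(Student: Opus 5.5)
We must prove Lemma \ref{lem:transcfspan}: if $F$ is a cyclic flat of $M=M[(A_1,\ldots,A_r)]$, then $r(F)=|\{i : F\cap A_i\neq\emptyset\}|$. Write $S=\{i : F\cap A_i\ne\emptyset\}$. The plan is to prove the two inequalities $r(F)\le |S|$ and $r(F)\ge |S|$ separately.

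The upper bound is immediate. Any partial transversal $I\subseteq F$ has a matching $\phi:I\to[r]$ with $x\in A_{\phi(x)}$. Since every $x\in I$ lies in $F$, we get $\phi(x)\in S$, and $\phi$ is injective, so $|I|\le|S|$. Hence $r(F)\le|S|$.

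For the lower bound we use that $F$ is a flat and has no coloops in $M|F$. Let $B$ be a basis of $M|F$ with matching $\phi:B\to[r]$, chosen among all bases of $M|F$ and all their matchings so that $|\phi(B)\cap S|$ is maximal. Note that $\phi(B)\subseteq S$ automatically, so it suffices to show $\phi(B)=S$. Suppose instead that some $i\in S-\phi(B)$ exists, and pick $y\in F\cap A_i$.

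\begin{itemize}
\item If $y\notin B$, then $B\cup\{y\}$ is a partial transversal, obtained by extending $\phi$ with $y\mapsto i$. This contradicts that $B$ is a basis of $M|F$.
\item If $y\in B$, we use that $y$ is not a coloop of $M|F$. There is a circuit $C\subseteq F$ with $y\in C$, so some basis of $M|F$ avoids $y$. By basis exchange, some $z\in F-B$ makes $B'=(B-\{y\})\cup\{z\}$ a basis of $M|F$. Now $B-\{y\}$ is matched by $\phi$ avoiding both $\phi(y)$ and $i$, so $B-\{y\}$ together with $y\mapsto i$ is a matching of $B$ whose image misses $\phi(y)$.
\end{itemize}

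The main obstacle is finishing the second case: turning this into an augmenting-path contradiction, so that one more index of $S$ is covered, with no circularity. Rather than a plain exchange, I would phrase it through Hall's theorem, which is cleaner. The rank of $F$ is
\[
\min_{T\subseteq [r]}\bigl(|[r]-T| + |\{x\in F : x\in \textstyle\bigcup_{j\in T}A_j \text{ only}\}|\bigr),
\]
the deficiency form of K\H{o}nig's theorem. Take a minimizing cover: a set $X\subseteq F$ of elements together with a set $J\subseteq S$ of indices such that every $x\in F$ either lies in $X$ or meets some $A_j$ with $j\in J$, and $r(F)=|X|+|J|$.

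The elements of $X$ whose sets all lie outside $J$ are then coloops of $M|F$: removing such an element drops the rank, because the cover certifies rank at most $|X|-1+|J|$ on the rest. Since $M|F$ has no coloops, we may take $X=\emptyset$, so $J$ covers $F$. That forces $J\supseteq S$, and hence $r(F)=|J|\ge|S|$, which completes the proof.
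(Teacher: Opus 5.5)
Your upper bound $r(F)\le|S|$ is correct, and a K\H{o}nig-type argument is the right way to get the lower bound. The paper gives no proof to compare against; it calls the lemma well known. However, the cover you use is the wrong notion, and with it the lower-bound steps fail. In a vertex cover $(X,J)$ of the bipartite graph joining $x\in F$ to $j$ whenever $x\in A_j$, each $x\in F-X$ must lie in \emph{no} $A_j$ with $j\notin J$. It is not enough that $x$ meets some $A_j$ with $j\in J$. Your displayed formula has the same defect: it adds $|[r]-T|$ to the number of $x\in F$ lying only in sets indexed by $T$, and that is not the deficiency form of K\H{o}nig's theorem.

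Concretely, present $U_{2,3}$ on $F=E=\{a,b,c\}$ by $(A_1,A_2)=(E,E)$. Then $F$ is a cyclic flat of rank $2$ with $S=\{1,2\}$, yet $X=\emptyset$, $J=\{1\}$ is a cover in your sense, and likewise $T=\{1\}$ gives the value $1$ in your formula. So the minimum is not $r(F)$. Such a cover certifies no upper bound on the rank of $F$ minus an element, so your coloop step is unjustified. Also, $J$ ``covers'' $F$ without containing $S$, so the step ``that forces $J\supseteq S$'' fails.

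The repair is short. K\H{o}nig's theorem gives $r(F)=\min_{J\subseteq[r]}\bigl(|J|+|\{x\in F: x\in A_j \text{ for some } j\notin J\}|\bigr)$, the minimum of $|X|+|J|$ over genuine vertex covers. Take a minimum one. For \emph{every} $x\in X$, not just those whose sets all avoid $J$, the pair $(X-\{x\},J)$ is a vertex cover for $F-\{x\}$. Hence $r(F-\{x\})\le r(F)-1$, so $x$ is a coloop of $M|F$. Therefore $X=\emptyset$, so every $A_j$ that meets $F$ has $j\in J$, which gives $S\subseteq J$ and $r(F)=|J|\ge|S|$.

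Equivalently, use $r(F)=\min_{Z\subseteq F}\bigl(|F-Z|+|\{j: A_j\cap Z\ne\emptyset\}|\bigr)$. For a minimizing $Z$, each element of $F-Z$ is a coloop of $M|F$, so $Z=F$ and $r(F)=|S|$ directly. This uses only that $M|F$ has no coloops, not that $F$ is a flat. The unfinished exchange argument in your second bullet can simply be dropped.
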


Brylawski \cite{aff} gave a useful way to view a transversal matroid
$M$ given a presentation $(A_1,A_2,\ldots,A_r)$ of $M$.  Let the set
$V=\{v_1,v_2,\ldots,v_r\}$ be disjoint from $E(M)$.  View the free
matroid $U_{r,r}$ on $V$ as having the elements of $V$ put at the
vertices of an $r$-vertex simplex via a bijection.  For each
$e\in E(M)$, by principal extension, put $e$ freely in the face of the
simplex that has the vertex set $\{v_i\,:\,e\in A_i\}$.  After placing
all elements of $E(M)$, delete $V$; the result is a geometric
representation of $M$.  By Lemma \ref{lem:transcfspan}, any cyclic
flat $F$ of $M$ spans a face of the simplex with $r(F)$ vertices.  A
fundamental transversal matroid can be represented in this way so
that, for each vertex of the simplex, at least one element of the
matroid is placed there.  It follows that a matroid $M$ is a
fundamental transversal matroid if and only if it has a basis $B$ for
which, for every cyclic flat $F$ of $M$, the set $F\cap B$ is a basis
of $F$.

\subsection{The free product and nested matroids}\label{subsec:freenested}

Recall from Section \ref{sec:intro} that, for matroids $M$ and $N$ on
disjoint sets, their \emph{free product} $M\frp N$ is the matroid on
$E(M)\cup E(N)$ in which the independent sets are the sets $I\cup J$
where $I$ is an independent set of $M$, $J\subseteq E(N)$, and
$|I\cup J|\leq r(M)+r_N(J)$.  Two examples are shown in Figure
\ref{fig:frpex}.  This operation was introduced by Crapo and Schmitt
\cite{fpm}.  In \cite{freeUniqFact}, they gave a variety of equivalent
formulations of the free product as well as many important properties
of this operation.  Two key properties are that this operation is
associative and that $(M\frp N)^*=N^*\frp M^*$.  This operation is not
commutative.  Two free products are well known: $M\frp U_{0,1}$ is the
free extension $M+_{E(M)}e$ of $M$; the dual operation, free
coextension of $M$, is $U_{1,1}\frp M$.  Thus, with the associative
property, when forming $M\frp N$, there is no loss of generality in
assuming that $M$ has no coloops and $N$ has no loops.  For our
purposes, the most relevant formulation of the free product is in
terms of cyclic flats: if $M$ has no coloops and $N$ has no loops,
then $M\frp N$ is the matroid on $E(M)\cup E(N)$ for which
$$\mathcal{Z}(M\frp N) = \mathcal{Z}(M)\cup \{E(M)\cup
A\,:\,A\in\mathcal{Z}(N)\}$$ and $r_{M\frp N}(A) = r_M(A)$ for all
$A\in\mathcal{Z}(M)$ while $r_{M\frp N}(E(M)\cup A) = r(M)+r_N(A)$ for
all $A\in\mathcal{Z}(N)$.  From either formulation, one easily checks
that $(M\frp N)|E(M)=M$ and $(M\frp N)/E(M)=N$.

All cyclic flats of $M\frp N$ are either subsets or supersets of
$E(M)$.  We will use \cite[Theorem 6.3]{freeUniqFact} which we state
next and which treats the converse. In the cases of interest to us,
the set $X$ below will be a cyclic flat.

\begin{figure}
  \centering
  \begin{tikzpicture}[scale=0.7]
    \draw[thick] (-6.5,-0.3)--(-3.5,-0.3);%

    \filldraw (-6.5,-0.3) circle (4pt);%
    \filldraw (-5.5,-0.3) circle (4pt);%
    \filldraw (-4.5,-0.3) circle (4pt);%
    \filldraw (-3.5,-0.3) circle (4pt);%

    \node at (-5,-1.4) {$M$};%

     \draw[thick] (-6,2.6)--(-4,2.6);%

     \filldraw (-6,2.75) circle (4pt);%
     \filldraw (-6,2.45) circle (4pt);%
     \filldraw (-6.23,2.6) circle (4pt);%
     
     \filldraw (-5,2.75) circle (4pt);%
     \filldraw (-5,2.45) circle (4pt);%
     
     \filldraw (-4,2.75) circle (4pt);%
     \filldraw (-4,2.45) circle (4pt);%

    \node at (-5,1.4) {$N$};%

    \draw[thick, black!30] (-2,0)--(0,0)--(1.5,1)--(1.5,4)--(0,3)
    --(-2,3)--(-2,0);%
    \draw[thick, black!30] (0,0)--(0,3);%

    \draw[thick, black!30, dashed] (0,0)--(1.3,-1)--(1.3,2)--(0,3);%
    \draw[thick, black!30] (0,0)--(1.3,-1)--(1.3,0.7);%

    \filldraw (0,0.6) circle (4pt);%
    \filldraw (0,1.2) circle (4pt);%
    \filldraw (0,1.8) circle (4pt);%
    \filldraw (0,2.4) circle (4pt);%
    
    \filldraw (-1,1) circle (4pt);%
    \filldraw (-1,2) circle (4pt);%
    \filldraw (-1.3,1.5) circle (4pt);%

    \filldraw (1.2,2.6) circle (4pt);%
    \filldraw (1.1,3.2) circle (4pt);%
   
    \filldraw (0.97,0.2) circle (4pt);%
    \filldraw (0.89,-0.3) circle (4pt);%

   \node at (0,-1.4) {$M\frp N$};%
  \end{tikzpicture}
  \hspace{1cm}
  \begin{tikzpicture}[scale=0.55]
    \draw[thick,
    black!30](16,-0.5)--(19,0.3)--(19,5.3)--(16,4.5)--(12,5.5)
    --(12,0.5)--(16,-0.5);%
    \draw[thick, black!30](16,-0.5)--(16,4.5);%

    \filldraw (17.5,1.5) circle (5pt);%
    \filldraw (17.5,3.5) circle (5pt);%
    \filldraw (13,2.5) circle (5pt);%
    \filldraw (13,2.8) circle (5pt);%
    \filldraw (14,1) circle (5pt);%
    \filldraw (14,4) circle (5pt);%
    \filldraw (15,3) circle (5pt);%
    \filldraw (15,2) circle (5pt);%
    \node at (15.6,-1.5) {$U_{1,2}\frp U_{2,4}\frp U_{1,2}$};%
  \end{tikzpicture}
  \caption{The free product $M\frp N$ of the rank-$2$ matroids $M$ and
    $N$ has rank $4$.  The nested matroid
    $U_{1,2}\frp U_{2,4}\frp U_{1,2}$ also has rank $4$.}
  \label{fig:frpex}
\end{figure}

\begin{thm}\label{thm:pinchpoint}
  For a matroid $K$, if $X\subseteq E(K)$ and, for each cyclic flat
  $F$ of $K$, either $X\subseteq F$ or $F\subseteq X$, then
  $K=(K|X)\frp (K/X)$.
\end{thm}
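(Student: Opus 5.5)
Set $M=K|X$ and $N=K/X$, with ground sets $X$ and $E(K)-X$. The plan is to show that $K$ and $M\frp N$ have the same independent sets. Recall that the independent sets of $M\frp N$ are the sets $I\cup J$ with $I$ independent in $M$, $J\subseteq E(K)-X$, and $|I\cup J|\le r(M)+r_N(J)$. Here $r(M)=r_K(X)$ and $r_N(J)=r_K(J\cup X)-r_K(X)$, so this bound is simply
$$|I\cup J|\le r_K(X\cup J).$$
Both matroids restrict to $M$ on $X$ and contract to $N$ on the complement. Since $M\frp N$ is the freest such matroid, every $K$-independent set is independent in $M\frp N$. This containment I would also verify directly: if $I\cup J$ is $K$-independent, then $I$ is independent in $M$, and $|I\cup J|=r_K(I\cup J)\le r_K(X\cup J)$.

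The substance is the reverse containment. Take $I$ independent in $M$ and $J\subseteq E(K)-X$ with $|I\cup J|\le r_K(X\cup J)$. I will show that $r_K(I\cup J)=|I\cup J|$ using Equation~(\ref{eq:cftorank}). That is, for every cyclic flat $F$ of $K$ I need
$$r_K(F)+|(I\cup J)-F|\ge |I\cup J|,$$
equivalently $|(I\cup J)\cap F|\le r_K(F)$. The hypothesis gives two cases.

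\emph{Case $F\subseteq X$.} Then $(I\cup J)\cap F=I\cap F$. Since $I$ is independent in $K|X$, we get $|I\cap F|\le r_K(F)$.

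\emph{Case $X\subseteq F$.} Then $(I\cup J)\cap F=I\cup (J\cap F)$. I apply the hypothesis inequality together with submodularity. Since $X\subseteq F$, the sets $F$ and $X\cup J$ have intersection $X\cup(J\cap F)$ and union $F\cup J$. Submodularity gives
$$r_K(X\cup J)\le r_K(F\cup J)-r_K(F)+r_K(X\cup (J\cap F)).$$
Also $r_K(F\cup J)\le r_K(F)+|J-F|$, so
$$r_K(X\cup J)\le |J-F|+r_K(X\cup(J\cap F)).$$
Combining this with $|I|+|J|\le r_K(X\cup J)$ yields
$$|I|+|J\cap F|\le r_K(X\cup(J\cap F))\le r_K(F),$$
where the last step holds because $X\cup (J\cap F)\subseteq F$. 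This is exactly the required bound.

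Equation~(\ref{eq:cftorank}) now gives $r_K(I\cup J)=|I\cup J|$, so $I\cup J$ is independent in $K$. Hence the two matroids have the same independent sets, and $K=(K|X)\frp(K/X)$.

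The only delicate step is the second case, where the hypothesis inequality must be localized to $F$. The submodularity computation above handles this, so I expect no serious obstacle.
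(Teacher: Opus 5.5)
Your overall strategy works, but one displayed step is false as written. In the case $X\subseteq F$, the inequality you attribute to submodularity goes the wrong way. Submodularity applied to $F$ and $X\cup J$ gives $r_K(F\cup J)+r_K(X\cup(J\cap F))\le r_K(F)+r_K(X\cup J)$. That is a \emph{lower} bound on $r_K(X\cup J)$, and your upper bound fails in general.

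For a counterexample, let $K$ be the rank-$2$ matroid on $\{a,b,c,d\}$ in which $\{a,b\}$ is a parallel class and $c,d$ are not parallel to each other or to $a$. Take $X=\emptyset$, the cyclic flat $F=\{a,b\}$, and $J=\{c,d\}$. Then $r_K(X\cup J)=2$, while your right side is $2-1+0=1$.

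The repair is immediate. Since $X\subseteq F$, monotonicity and the fact that adding the elements of $J-F$ raises rank by at most $|J-F|$ give
\[
|I|+|J|\le r_K(X\cup J)\le r_K(F\cup J)\le r_K(F)+|J-F|,
\]
so $|(I\cup J)\cap F|=|I|+|J\cap F|\le r_K(F)$. The intermediate bound you actually carry forward, $r_K(X\cup J)\le |J-F|+r_K(X\cup(J\cap F))$, is also true, directly by subadditivity; only its claimed derivation is wrong.

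The rest of your argument is correct: rewriting the free-product condition as $|I\cup J|\le r_K(X\cup J)$, the easy containment, the reduction via Equation~(\ref{eq:cftorank}) to $|(I\cup J)\cap F|\le r_K(F)$ for every cyclic flat $F$, and the case $F\subseteq X$. So the step you flagged as the delicate one is exactly where the error sits, though it is not actually delicate.

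As for the route, the paper gives no proof of this statement; it quotes it from Crapo and Schmitt \cite[Theorem 6.3]{freeUniqFact}. Once repaired, your argument is a short, self-contained alternative. It uses only the cyclic-flat rank formula and the independent-set description of the free product, both already in the paper. The comparability hypothesis enters exactly once, to split the cyclic flats into the two cases. The citation keeps the paper brief and relies on Crapo and Schmitt's development of the free product; your proof would make the paper's later uses of this result independent of that source.
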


We will also use the following result about the uniqueness of
factorizations into free products \cite[Theorem 6.18]{freeUniqFact}.

\begin{thm}\label{thm:frpuniquefactorization}
  Assume that $M_1$ and $M_2$ are matroids on disjoint sets, as are
  $N_1$ and $N_2$, and that $|E(M_1)|=|E(N_1)|$.  If $M_1\frp M_2$ is
  isomorphic to $N_1\frp N_2$, then $M_i$ is isomorphic to $N_i$ for
  each $i\in[2]$.
\end{thm}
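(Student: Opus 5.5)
The plan is to turn the isomorphism into a statement about two ``pinch sets'' in a single matroid. Let $K=M_1\frp M_2$, with ground set $E(M_1)\cup E(M_2)$, and let $\phi$ be an isomorphism from $N_1\frp N_2$ onto $K$. Set $X=E(M_1)$ and $Y=\phi(E(N_1))$. Then $|X|=|Y|$, and we have $K|X=M_1$ and $K/X=M_2$. Also, $K|Y\cong N_1$ and $K/Y\cong N_2$. So it suffices to prove the following claim: if $K=(K|X)\frp(K/X)=(K|Y)\frp(K/Y)$ and $|X|=|Y|$, then $K|X\cong K|Y$ and $K/X\cong K/Y$. By the description of $\mathcal{Z}(M\frp N)$ (after splitting off coloops of the first factor and loops of the second), every cyclic flat of $K$ is comparable with $X$, and likewise with $Y$.

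\emph{Step 1: $X$ and $Y$ split $\mathcal{Z}(K)$ the same way.} Let $\mathcal{Z}_X^-$ be the set of cyclic flats of $K$ contained in $X$, and let $\mathcal{Z}_X^+$ be the set of those containing $X$; define $\mathcal{Z}_Y^{\pm}$ similarly. Suppose $G\in\mathcal{Z}_X^-$ but $G\not\subseteq Y$. Comparability with $Y$ forces $Y\subsetneq G\subseteq X$, which contradicts $|X|=|Y|$. Hence $\mathcal{Z}_X^-\subseteq\mathcal{Z}_Y^-$, and by symmetry the two sets are equal. The dual argument gives $\mathcal{Z}_X^+=\mathcal{Z}_Y^+$; this can be done directly, or by applying Lemma~\ref{lem:cfdual} together with $(M\frp N)^*=N^*\frp M^*$.

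\emph{Step 2: build the isomorphisms from cyclic flats.} The cyclic flats of $K|X$ are the cyclic flats of $K$ lying in $X$, with the same ranks. I would first check that every cyclic flat of $K|X$ is in fact a cyclic flat of $K$; this is the step I expect to need the most care. Once that holds, $\mathcal{Z}(K|X)=\mathcal{Z}_X^-=\mathcal{Z}_Y^-=\mathcal{Z}(K|Y)$, with equal ranks. Let $F$ be the largest member of this common lattice. The elements of $X-F$ and $Y-F$ are exactly the coloops of $K|X$ and of $K|Y$, respectively, and $|X-F|=|Y-F|$. Take a bijection $X\to Y$ that is the identity on $F$ and sends $X-F$ onto $Y-F$. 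It preserves cyclic flats and their ranks, so by the remark after Theorem~\ref{thm:axioms} (via equation~(\ref{eq:cftorank})) it is an isomorphism $K|X\cong K|Y$.

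For the contractions, I would apply the same argument to the duals. We have $K^*=(K/X)^*\frp(K|X)^*$, and the relevant sets there are $E(K)-X$ and $E(K)-Y$, which have equal size. This gives $(K/X)^*\cong(K/Y)^*$, and hence $K/X\cong K/Y$.

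The main obstacle is the bookkeeping with coloops and loops in Step 2. One must justify that a cyclic flat of $K|X$ is a cyclic flat of $K$, i.e.\ that it is closed in $K$ and not merely in $K|X$. One must also check that the largest cyclic flat of $K|X$ does not depend on whether we view it from $X$ or from $Y$. For the first point, I would use the fact that $X$ is comparable with every cyclic flat of $K$, together with formula~(\ref{eq:cftorank}). The key sub-claim is that for $A\subseteq X$, the closure $\cl_K(A)$ cannot pick up elements outside $X$ unless $X\subseteq\cl_K(A)$; this is the independence description of the free product.
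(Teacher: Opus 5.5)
Your comparability observation and Step~1 are correct. Step~2, however, rests on a claim that is false, and it fails exactly in the cases where the theorem has content.

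\textbf{The gap.} A cyclic flat of $K|X$ need not be a cyclic flat of $K$. Your sub-claim (that $\cl_K(A)$ leaves $X$ only if it contains $X$) does handle every $A\ne X$. But $A=X$ genuinely fails whenever $M_1$ has no coloops, so that $X\in\mathcal{Z}(K|X)$, and $M_2$ has loops, so that $X$ is not closed in $K$. Concretely, take $K=U_{1,2}\frp U_{0,1}\cong U_{1,3}$ on $\{a,b,c\}$ with $X=\{a,b\}$, and take $N_1=U_{1,2}$ on $\{a,c\}$ and $N_2=U_{0,1}$ on $\{b\}$, so $Y=\{a,c\}$. Then:
\begin{itemize}
\item $\mathcal{Z}(K)=\{\emptyset,\{a,b,c\}\}$ and $\mathcal{Z}_X^-=\mathcal{Z}_Y^-=\{\emptyset\}$;
\item but $\mathcal{Z}(K|X)=\{\emptyset,X\}\neq\{\emptyset,Y\}=\mathcal{Z}(K|Y)$;
\item the largest member $F$ of $\mathcal{Z}_X^-$ is $\emptyset$, yet $X-F$ is not the set of coloops of $K|X\cong U_{1,2}$, which has none.
\end{itemize}
In $U_{0,1}\frp U_{0,1}$ the set $\mathcal{Z}_X^-$ is even empty, so ``the largest member'' need not exist.

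Note also that if $M_1$ has no coloops and $M_2$ has no loops, then $X$ is itself a cyclic flat of $K$. Step~1 then forces $X=Y$, and there is nothing to prove. So everything of substance happens in the degenerate cases. There your route would still have to show that $N_1$ is coloop-free exactly when $M_1$ is, and that $r_K(X)=r_K(Y)$. Your sub-claim addresses neither.

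\textbf{The fix.} Stay inside $\mathcal{Z}(K)$ and never compute $\mathcal{Z}(K|X)$.
\begin{enumerate}
\item Comparability together with Step~1 shows that every cyclic flat of $K$ either lies in $X\cap Y$ or contains $X\cup Y$.
\item Since $|X-Y|=|Y-X|$, let $\tau$ be a permutation of $E(K)$ that interchanges $X-Y$ and $Y-X$ and fixes all other elements.
\item Then $\tau(G)=G$ for every $G\in\mathcal{Z}(K)$. By (\ref{eq:cftorank}), $\tau$ is an automorphism of $K$, and $\tau(X)=Y$.
\item This gives $K|X\cong K|Y$ and $K/X\cong K/Y$ at once, with no appeal to duality.
\end{enumerate}
For the same reason, the bijection you actually wrote down (fixing $F$ pointwise) is an isomorphism. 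The justification just has to go through formula (\ref{eq:cftorank}) for $K$, not through $\mathcal{Z}(K|X)$.

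\textbf{Comparability in general.} This also deserves a line, because the displayed description of $\mathcal{Z}(M\frp N)$ is stated only for $M$ coloop-free and $N$ loopless. Suppose a circuit $C$ of $M\frp N$ meets $E(N)$. Then $C\cap E(M)$ is independent in $M$. The independence description of the free product then gives
$r(C\cup E(M))\le r(M)+r_N(C\cap E(N))\le |C|-1$.
Hence $E(M)\subseteq\cl(C)$, so every cyclic flat meeting $E(N)$ contains $E(M)$.

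With these repairs your argument becomes a complete, short, self-contained proof. The paper itself does not prove this statement; it quotes it from Crapo and Schmitt.
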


Note that nested matroids (i.e., matroids $M$ for which
$\mathcal{Z}(M)$ is a chain) are precisely the iterated free products
of uniform matroids.  For unlabeled nested matroids, all that matters
in the free product
$U_{r_1,n_1}\frp U_{r_2,n_2}\frp\cdots\frp U_{r_k,n_k}$ is the rank,
$r_i$, and nullity, $n_i-r_i$, of each $U_{r_i,n_i}$.  As noted above,
we can assume that no $U_{r_i,n_i}$ except perhaps $U_{r_1,n_1}$ has
loops and that none except perhaps $U_{r_k,n_k}$ have coloops.
Unlabeled nested matroids correspond bijectively to lattice paths: map
$U_{r_1,n_1}\frp U_{r_2,n_2}\frp\cdots\frp U_{r_k,n_k}$ to the lattice
path $N^{r_1}E^{n_1-r_1}N^{r_2}E^{n_2-r_2}\ldots N^{r_k}E^{n_k-r_k}$,
where $N^a$ denotes $a$ consecutive north steps and $E^b$ denotes $b$
consecutive east steps, with each step of unit length.  For instance,
the nested matroid in Figure \ref{fig:frpex} maps to the path
$NEN^2E^2NE$.  From basic lattice path counting, it follows that the
number of unlabeled rank-$r$ nested matroids on $n$ elements is
$\binom{n}{r}$.

\subsection{Matroid base polytopes, valuative invariants, and the
  $\mathcal{G}$-invariant}\label{ssec:VIandG}

For a matroid $M$ on $[n]$ and basis $B$ of $M$, the
\emph{characteristic vector} $\mathbf{v}_B$ of $B$ is the vector in
$\mathbb{R}^n$ in which entry $i$ is $1$ if and only if $i\in B$,
otherwise entry $i$ is $0$.  The convex hull of the vectors
$\mathbf{v}_B$, as $B$ ranges over all bases of $M$, is the
\emph{matroid base polytope} $\mathcal{P}(M)$.  One can recover the
matroid $M$ from the polytope $\mathcal{P}(M)$ since the vertices of
$\mathcal{P}(M)$ give the bases of $M$.

For $X\subseteq\mathbb{R}^n$, its \emph{indicator function}
$\bar{\mathbb{I}}(X):\mathbb{R}^n\to \mathbb{R}$ is defined by
$$\bar{\mathbb{I}}(X)(x)=
\begin{cases}
  1, & \text{ if } x\in X,\\
  0, & \text{ if } x\not\in X.
\end{cases}$$
We call the indicator function $\bar{\mathbb{I}}(\mathcal{P}(M))$ of
the matroid base polytope of $M$ the  \emph{indicator function of}
$M$.  The next result is due to Derksen and Fink \cite[Theorem
5.4]{DerksenFink}. 

\begin{thm}\label{thm:dfnested}
  Let $M$ be a rank-$r$ matroid on $[n]$ and let $N_1,N_2,\ldots,N_t$
  be the nested matroids of rank $r$ on $[n]$.  There are unique
  integers $a_1,a_2,\ldots,a_t$ for which
  \begin{equation}\label{eq:lcnested}
    \bar{\mathbb{I}}(\mathcal{P}(M))=a_1\,
    \bar{\mathbb{I}}(\mathcal{P}(N_1))+ a_2\,
    \bar{\mathbb{I}}(\mathcal{P}(N_2))+\cdots+a_t\,
    \bar{\mathbb{I}}(\mathcal{P}(N_t)).
  \end{equation}
\end{thm}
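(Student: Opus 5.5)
The statement is Derksen and Fink's theorem: the indicator function of any matroid base polytope is a unique integer combination of indicator functions of nested matroid polytopes of the same rank on the same set. I will treat existence and uniqueness separately. Both steps will use the fact that the relevant span is a subspace of functions on the hypersimplex $\Delta_{r,n}$ whose dimension is at most the number $t$ of labeled nested matroids of rank $r$ on $[n]$.

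\textbf{Existence.} The idea is to decompose $M$ along the chains of its lattice of cyclic flats. Let $C$ range over chains $0_{\mathcal{Z}}=F_0\subsetneq F_1\subsetneq\cdots\subsetneq F_k=E$ in $\mathcal{Z}(M)$ that contain the least and greatest cyclic flats. Each such chain is a sublattice of $\mathcal{Z}(M)$ containing the least cyclic flat. By Corollary \ref{cor:generalrelax}, the pair $(C, r_M|_C)$ therefore defines a matroid $N_C$ on $[n]$ of rank $r$, and $N_C$ is nested.

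The claimed identity is a M\"obius-type inclusion--exclusion:
\[
\bar{\mathbb{I}}(\mathcal{P}(M))=\sum_{C}(-1)^{|C|-2}\,\bar{\mathbb{I}}(\mathcal{P}(N_C)),
\]
where the sum runs over chains from $0_{\mathcal{Z}}$ to $E$. Equivalently, the coefficients come from $\mu$ on the lattice of cyclic flats.

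To prove the identity, I evaluate both sides at a point $x\in[0,1]^n$ with $\sum x_i=r$. The key steps are as follows.
\begin{itemize}
\item[(a)] By equation (\ref{eq:cftorank}), $x\in\mathcal{P}(M)$ if and only if $\sum_{i\in F}x_i\le r(F)$ for every $F\in\mathcal{Z}(M)$.
\item[(b)] Likewise, $x\in\mathcal{P}(N_C)$ if and only if this inequality holds for every $F\in C$.
\item[(c)] Let $S_x$ be the set of cyclic flats whose inequality $x$ violates. The right side then counts, with signs, the chains that avoid $S_x$.
\item[(d)] I must show that this signed count is $1$ when $S_x=\emptyset$ and $0$ otherwise.
\end{itemize}
The vanishing in (d) is the main obstacle. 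It needs a structural fact about $S_x$. By submodularity of $r$ together with modularity of $F\mapsto\sum_{i\in F}x_i$, the satisfied flats are closed in a suitable sense: if $A$ and $B$ are both satisfied, then so is $A\join B$ or $A\meet B$, with the $|(A\cap B)-(A\meet B)|$ correction from (Z3) helping. I would use this to show that the complement of $S_x$, with $0_{\mathcal{Z}}$ and $E$ adjoined, has a contractible order complex, or contains a cone point. Alternatively, one can argue via the closure-operator/crosscut theorem. In either case, the reduced Euler characteristic vanishes.

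If this combinatorial route stalls, the fallback is the standard valuation argument. Matroid polytope indicator functions generate the same space as the Schubert (nested) ones, because every $\mathcal{P}(M)$ admits a matroid subdivision refining toward nested pieces. Thus $\bar{\mathbb{I}}(\mathcal{P}(M))$ lies in the span of the nested indicators by inclusion--exclusion over the faces of that subdivision.

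\textbf{Uniqueness.} I will show that the $t$ functions $\bar{\mathbb{I}}(\mathcal{P}(N_i))$ are linearly independent. Order the nested matroids by the weak order. For each nested $N$, consider the vertex $\mathbf{v}_B$, where $B$ is the lexicographically first basis determined by its lattice path. Concretely, $B$ consists of the elements that are north steps in the labeled chain.

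Under a suitable total order, these witness points give a triangular matrix. The point $\mathbf{v}_{B_N}$ lies in $\mathcal{P}(N')$ only if $N'$ is at least $N$ in the chosen order. The diagonal entries are $1$, which gives independence. Checking this triangularity carefully for labeled chains is routine but fiddly.
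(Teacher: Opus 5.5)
The paper does not prove this theorem; it quotes it from Derksen and Fink. Right after the statement it records Hampe's formula: the coefficient of $\bar{\mathbb{I}}(\mathcal{P}(N_C))$ is $-\mu(C,\hat{1})$, computed in the lattice $\mathcal{C}(\mathcal{Z}(M))$ of chains.

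\textbf{Existence.} Your identity with coefficients $(-1)^{|C|-2}$ is false, and it already fails in step (d) when $S_x=\emptyset$. By P.~Hall's theorem, your signed count at any $x\in\mathcal{P}(M)$ is $\sum_C(-1)^{|C|-2}=-\mu_{\mathcal{Z}(M)}(0_{\mathcal{Z}},E)$. This is $0$ whenever $\mathcal{Z}(M)$ is a chain with at least three elements. For example, take the nested matroid $M=U_{1,2}\frp U_{1,2}$, with $\mathcal{Z}(M)=\{\emptyset,A,E\}$. Your formula then reads $\bar{\mathbb{I}}(\mathcal{P}(M))=\bar{\mathbb{I}}(\mathcal{P}(U_{2,4}))-\bar{\mathbb{I}}(\mathcal{P}(M))$, which is wrong at every point of $\mathcal{P}(M)$. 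For $U_{1,2}\oplus U_{1,2}$ with parallel classes $A,B$, your right side is $-1$ at the barycenter. The correct expansion is $\bar{\mathbb{I}}(\mathcal{P}(N_{\{\emptyset,A,E\}}))+\bar{\mathbb{I}}(\mathcal{P}(N_{\{\emptyset,B,E\}}))-\bar{\mathbb{I}}(\mathcal{P}(U_{2,4}))$ (compare the proof of Theorem~\ref{thm:notext}). So no contractibility argument can rescue (d). Contrary to your remark, the coefficients are governed by the M\"obius function of the chain lattice, not that of $\mathcal{Z}(M)$.

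Your closure property is also aimed at the wrong set, and it can fail for satisfied flats. What is true is that $\phi(F)=r(F)-x(F)$ is submodular on the lattice $\mathcal{Z}(M)$, by (Z3) and $0\le x\le 1$. This controls the \emph{violated} flats.

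With Hampe's coefficients your pointwise strategy does work, for $x\in\Delta_{r,n}$ vanishing on the loops.
\begin{itemize}
\item Since $\sum_{C\in\mathcal{C}(\mathcal{Z}(M))}\mu(C,\hat{1})=0$, the right side at $x$ equals the sum of $\mu(C,\hat{1})$ over $\hat{1}$ together with the chains meeting $S_x$.
\item This is $1$ if $S_x=\emptyset$. Otherwise it is $-\tilde{\chi}$ of the order complex of the chains meeting $S_x$.
\item The map $C\mapsto(C\cap S_x)\cup\{0_{\mathcal{Z}},E\}$ retracts that complex onto the barycentric subdivision of the order complex of $S_x$.
\item The latter is contractible. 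Let $G$ be the least minimizer of $\phi$; submodularity gives $A\meet G\in S_x$ for every $A\in S_x$, so $G$ serves as a cone point.
\end{itemize}
Your fallback is not an argument. Every $\mathcal{P}(N_C)$ contains $\mathcal{P}(M)$, so the expansion is an inclusion--exclusion of larger polytopes, not a subdivision of $\mathcal{P}(M)$ into nested pieces.

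\textbf{Uniqueness.} The vertex-witness approach cannot work, for counting reasons. The points $\mathbf{v}_{B_N}$ are vertices of the hypersimplex, so there are at most $\binom{n}{r}$ distinct ones. But $t$ counts \emph{labeled} nested matroids and exceeds $\binom{n}{r}$ whenever $0<r<n$. The $t\times t$ evaluation matrix therefore has repeated rows and cannot be unitriangular. For $r=2$ and $n=4$, take just the loopless and coloopless ones: $U_{2,4}$ and the six matroids $N_P$ with a single parallel pair $P$. These seven already outnumber the six vertices. Moreover, $\sum_P\bar{\mathbb{I}}(\mathcal{P}(N_P))-5\,\bar{\mathbb{I}}(\mathcal{P}(U_{2,4}))$ vanishes at every vertex but equals $1$ at the barycenter. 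Linear independence has to be detected at non-vertex points, or by pairing against a separating family of valuations. Your triangularity claim does not supply either.
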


Using this notation, if $a_i\ne 0$, then the set of cyclic flats of
the nested matroid $N_i$ is a chain $C$ in $\mathcal{Z}(M)$ that
includes the least and greatest cyclic flats of $M$; furthermore,
$r_{N_i}(X)=r_M(X)$ for all $X\in C$.  Thus, if $a_i\ne 0$, the nested
matroid $N_i$ has the same loops and coloops as $M$.  This justifies
an assumption that we will make when it simplifies the exposition,
namely, that $M$ has neither loops nor coloops.  For a nested matroid
$N_i$ for which $a_i\ne 0$, Hampe \cite{Hampe} gave an elegant and
useful expression for the coefficient $a_i$ using the M\"obius
function.  Let $\mathcal{C}(\mathcal{Z}(M))$ consist of all chains in
$\mathcal{Z}(M)$ that include the least and greatest cyclic flats,
ordered by inclusion, along with a greatest element $\hat{1}$.  The
intersection of two chains in $\mathcal{C}(\mathcal{Z}(M))$ is in
$\mathcal{C}(\mathcal{Z}(M))$, and $\hat{1}$ is the greatest element,
so $\mathcal{C}(\mathcal{Z}(M))$ is a lattice.  The coefficient $a_i$
of $\bar{\mathbb{I}}(\mathcal{P}(N_i))$, where $\mathcal{Z}(N_i)$ is
the chain $C\in \mathcal{C}(\mathcal{Z}(M))$, is the M\"obius value
$-\mu(C,\hat{1})$ in $\mathcal{C}(\mathcal{Z}(M))$.  Figure
\ref{fig:MuExample} gives an example.

Equation (\ref{eq:lcnested}) motivates one of several equivalent
definitions of a valuation.  First, a \emph{matroid invariant} is a
function that is defined on the set of matroids and that assigns the
same value to isomorphic matroids.  A matroid invariant $f$ taking
values in an abelian group $A$ is \emph{valuative}, or is a
\emph{valuation}, if, for any matroids $M_1,M_2,\ldots,M_t$ of rank
$r$ on $[n]$ and any integers $a_1,a_2,\ldots,a_t$, if
$$a_1\,
\bar{\mathbb{I}}(\mathcal{P}(M_1))(x)+ a_2\,
\bar{\mathbb{I}}(\mathcal{P}(M_2))(x)+\cdots+a_t\,
\bar{\mathbb{I}}(\mathcal{P}(M_t))(x)=0$$ for all $x\in\mathbb{R}^n$,
then $a_1\, f(M_1)+a_2\, f(M_2)+\cdots+a_t\, f(M_t)$ is the identity
of $A$.  Thus, by Theorem \ref{thm:dfnested}, the values of a
valuative invariant $f$ on (unlabeled) nested matroids determine $f$.
For example, for the matroid $M\frp N$ in Figure \ref{fig:frpex} and a
valuative invariant $f$, we have
$$f(M\frp N) = f(U_{2,4}\frp U_{1,3}\frp U_{1,4}) +2
f(U_{2,4}\frp U_{1,2}\frp U_{1,5}) - 2 f(U_{2,4}\frp U_{2,7}) $$
where, using the notation for the cyclic flats established in Figure
\ref{fig:MuExample}, the first term on the right is from the chain
$\{\emptyset, A, X,E\}$ (writing $E$ for $E(M\frp N)$), the next term
is from the two chains $\{\emptyset, A, Y,E\}$ and
$\{\emptyset, A, Z,E\}$, and the last term is from
$\{\emptyset, A, E\}$, which has M\"obius value $2$.

\begin{figure}
  \centering
  \hspace{6pt}
  \begin{tikzpicture}[scale=1]
    \node[inner sep = 0.3mm] (em) at (1,0) {\footnotesize
      $\emptyset$};%
    \node[inner sep = 0.3mm] (a) at (1,0.75) {\footnotesize $A$};%
    \node[inner sep = 0.3mm] (b) at (0,1.5) {\footnotesize $X$};%
    \node[inner sep = 0.3mm] (c) at (1,1.5) {\footnotesize $Y$};%
    \node[inner sep = 0.3mm] (d) at (2,1.5) {\footnotesize $Z$};%
    \node[inner sep = 0.3mm] (e) at (1,2.25) {\footnotesize
      $E(M\frp N)$};%

    \foreach \from/\to in {em/a,a/b,a/c,a/d,b/e, c/e,d/e}
    \draw(\from)--(\to);%
    
    \node at (1,-0.75) {$\mathcal{Z}(M\frp N)$};%
  \end{tikzpicture}
  \hspace{1cm}
  \begin{tikzpicture}[scale=1]
    \node[inner sep = 0.3mm] (em) at (-0.5,-0.2) {
       $\genfrac{}{}{0pt}{}{\emptyset}{0}$};%

    \node[inner sep = 0.3mm] (a) at (-2,0.55) {
      $\genfrac{}{}{0pt}{}{\{A\}}{2}$};%
    \node[inner sep = 0.3mm] (b) at (-1,0.55) {
      $\genfrac{}{}{0pt}{}{\{X\}}{0}$};%
    \node[inner sep = 0.3mm] (c) at (0,0.55) {
      $\genfrac{}{}{0pt}{}{\{Y\}}{0}$};%
    \node[inner sep = 0.3mm] (d) at (1,0.55) {
      $\genfrac{}{}{0pt}{}{\{Z\}}{0}$};%

    \node[inner sep = 0.3mm] (ab) at (-2,1.5) {
      $\genfrac{}{}{0pt}{}{\{A,X\}}{-1}$} ;%
    \node[inner sep = 0.3mm] (ac) at (-0.5,1.5) {
      $\genfrac{}{}{0pt}{}{\{A,Y\}}{-1}$};%
    \node[inner sep = 0.3mm] (ad) at (1,1.5) {
      $\genfrac{}{}{0pt}{}{\{A,Z\}}{-1}$ };%
    
    \node[inner sep = 0.3mm] (t) at (-0.5,2.25) {
      $\genfrac{}{}{0pt}{}{\hat{1}}{1}$};%

    \foreach \from/\to in {em/a,em/b,em/c,em/d,
      a/ab,a/ac,a/ad,b/ab,c/ac,
      d/ad,ab/t,ac/t,ad/t} \draw(\from)--(\to);%

    \node at (-0.5,-0.75) {$\mathcal{C}(\mathcal{Z}(M\frp M))$};%
\end{tikzpicture}  
\caption{On the left, the lattice of cyclic flats of the matroid
  $M\frp N$ in Figure \ref{fig:frpex}; on the right, its lattice of
  chains of cyclic flats.  The flat $A$ has rank $2$, size $4$; the
  flat $X$ has rank $3$, size $7$; flats $Y$ and $Z$ have rank $3$,
  size $6$.  The cyclic flats $\emptyset$ and $E(M\frp N)$ are in each
  chain but are not listed to reduce clutter.  The number below a
  chain $C$ is $\mu(C,\hat{1})$. }\label{fig:MuExample}
\end{figure}

The next result, from \cite[Theorem 5.6]{catdata}, gives the valuative
invariants that we will use.

\begin{lemma}\label{lem:listvalinv}
  Each of the following is a valuative invariant of a matroid $M$:
  \begin{enumerate}
  \item $F_{h,k}(M;s_h,s_{h+1},\ldots,s_k)$, which, for any
    $h,k,s_h,s_{h+1},\ldots,s_k\in \mathbb{N}\cup\{0\}$ for which
    $0\leq h\leq k\leq r(M)$, is the number of flags
    $F_h\subsetneq F_{h+1}\subsetneq \cdots\subsetneq F_k$ of flats of
    $M$ that satisfy $r(F_i)=i$ and $ |F_i|=s_i$ for all $i$ for which
    $h\leq i\leq k$, and
  \item $f_k(M;s,c)$, which, for any $k,s,c\in \mathbb{N}\cup\{0\}$,
    is the number of rank-$k$ size-$s$ flats $X$ of $M$ for which
    $M|X$ has $c$ coloops.
  \end{enumerate}
\end{lemma}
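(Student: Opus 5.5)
The plan is to reduce both invariants to finite signed sums of one basic kind of term, whose valuativeness is standard. For a chain of subsets $S_1\subseteq S_2\subseteq\cdots\subseteq S_m$ of $[n]$ and integers $a_1,\ldots,a_m$, consider the function $M\mapsto\prod_{i}[r_M(S_i)=a_i]$. This is a valuation in the labeled sense; this is the result of Derksen and Fink (equivalently Ardila--Fink--Rinc\'on) that the rank data along a single chain is valuative. I take it as the input. Summing such a function over all chains of subsets with prescribed sizes $|S_i|$ is invariant under relabeling. So it gives a valuative matroid invariant. The task is therefore to write each of $F_{h,k}$ and $f_k$ as an integer combination of such sums.

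\emph{Flatness via M\"obius inversion on a Boolean lattice.} Consider $X\subseteq Y$ with $\cl(X)\subseteq Y$. The sets $T\subseteq Y-X$ with $r(X\cup T)=r(X)$ are exactly the subsets of $\cl(X)-X$. Hence
$$\sum_{T\subseteq Y-X}(-1)^{|T|}\,[r(X\cup T)=r(X)]=[\cl(X)=X].$$
Now consider a flag $F_h\subsetneq\cdots\subsetneq F_k$ of sets with $r(F_i)=i$. All $F_i$ are flats if and only if $F_k$ is closed in $M$ and each $F_j$ with $j<k$ is closed in $M|F_{j+1}$. Indeed, if $F_{j+1}$ is a flat, then $\cl(F_j)\subseteq F_{j+1}$, and we argue downward from $k$. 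So I apply the identity with $Y=F_{j+1}$ for $j<k$ and with $Y=E$ for $j=k$, and multiply out. The product of the flag indicators then becomes an alternating sum over choices $T_j\subseteq F_{j+1}-F_j$ and $T_k\subseteq E-F_k$. Each term is a rank indicator on the chain
$$F_h\subseteq F_h\cup T_h\subseteq F_{h+1}\subseteq\cdots\subseteq F_k\subseteq F_k\cup T_k.$$
The required ranks are $i$ at $F_i$ and at $F_i\cup T_i$, and the sizes are fixed by $s_i$ and $|T_i|$. Summing over all such chains with fixed size profile gives $F_{h,k}(M;s_h,\ldots,s_k)$ as an integer combination of the basic valuations.

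\emph{Coloops.} Let $U$ be the set of coloops of $M|X$. For $Z\subseteq X$ we have $r(Z)=r(X)-|X-Z|$ if and only if $X-Z\subseteq U$. Therefore
$$N_m(X)=\#\{Z\subseteq X:|X-Z|=m,\ r(Z)=r(X)-m\}=\binom{|U|}{m}.$$
Binomial inversion gives
$$[\,|U|=c\,]=\sum_{m\ge c}(-1)^{m-c}\binom{m}{c}N_m(X).$$
I combine this with the flatness expansion for $X$ with $Y=E$. Then $f_k(M;s,c)$ becomes a signed sum of rank indicators on chains $Z\subseteq X\subseteq X\cup T$, with required ranks $k-m$, $k$, $k$ and fixed sizes. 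Hence $f_k(M;s,c)$ is valuative.

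The main obstacle is the flag case. There the individual flatness corrections must be arranged so that all the sets involved still form a single chain. The downward-closure observation (restricting $T_j$ to $F_{j+1}-F_j$) is exactly what achieves this, so that step needs the most care. The rest is bookkeeping with the cited chain valuation.
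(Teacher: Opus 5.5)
Your argument is correct. It is also more self-contained than what the paper offers. The paper gives no proof of this lemma: it imports it from \cite[Theorem 5.6]{catdata}, a source set in the framework of the $\mathcal{G}$-invariant. There the relevant point is that these counts are determined linearly by $\mathcal{G}(M)$, and valuativeness then follows from Derksen and Fink's universality result.

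Your route is more explicit, and it isolates the one input actually needed: the rank indicator along a fixed chain is a labeled valuation. You then use M\"obius inversion on Boolean intervals twice. First, flatness becomes a product of factors $[\cl(F_j)\cap F_{j+1}=F_j]$ and $[\cl(F_k)=F_k]$. Second, the coloop count is handled by $N_m(X)=\binom{|U|}{m}$ followed by binomial inversion. Together these turn $F_{h,k}$ and $f_k$ into explicit signed sums of symmetrized chain-rank sums. The downward-closure criterion is exactly what keeps all the sets $F_j\subseteq F_j\cup T_j\subseteq F_{j+1}$ nested, so each term stays a condition on a single chain.

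The two routes are closer than they look. Take a symmetrized chain sum with size profile $(\sigma_1,\ldots,\sigma_m)$ and rank profile $(a_1,\ldots,a_m)$, and multiply it by $\sigma_1!(\sigma_2-\sigma_1)!\cdots(n-\sigma_m)!$. The result is the number of permutations $\pi$ whose rank sequence $\underline{r}(\pi)$ has partial sums $a_i$ at positions $\sigma_i$. That is a linear functional of $\mathcal{G}(M)$, so you could equally take as your only input that $\mathcal{G}$ is valuative.

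One small precision. As you state the identity, it assumes $\cl(X)\subseteq Y$, and this can fail for $Y=F_{j+1}$ when $F_{j+1}$ is not a flat. In general, however, the alternating sum equals $[\cl(X)\cap Y=X]$, that is, the indicator that $X$ is closed in $M|Y$. This is precisely the factor your criterion uses, so your expansion agrees with the product of flatness indicators on every chain of sets, not only on chains whose upper members are already flats.
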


Setting $h=k$ in item (1) gives the number of flats of $M$ that have a
specified rank and size.  The number of cyclic flats of $M$ of a
specified rank and size is a valuative invariant; set $c=0$ in item
(2).  Linear combinations of real-valued valuative invariants are
valuative, so, for instance, the total number of cyclic flats of a
matroid is a valuation.  For a matroid $M$ and a set $S$ of $r(M)+1$
nonnegative integers $s_0<s_1<\cdots<s_{r(M)}$, we shorten
$F_{0,r(M)}(M;s_0,s_1,\ldots,s_{r(M)})$ to $F(M;S)$.

We turn to a valuative invariant that was introduced by Derksen
\cite{derksen} and that Derksen and Fink \cite{DerksenFink} proved to
be universal among valuative invariants.  For a rank-$r$ matroid $M$
and permutation $\pi=e_1e_2\ldots e_n$ of $E(M)$, the \emph{rank
  sequence} $\underline{r}(\pi)$ of $\pi$ is the sequence
$r_1 r_2 \ldots r_n$ where $r_1 = r(\{e_1\})$ and for $j \geq 2$,
$$r_j = r(\{e_1, e_2,\ldots,e_j\}) - r(\{e_1, e_2,\ldots,e_{j-1}\}).$$
Thus, $r$ entries of $\underline{r}(\pi)$ are $1$ and $n-r$ are $0$;
we call such sequences \emph{$(n,r)$-sequences}.  Fix integers $n$ and
$r$ with $0\leq r\leq n$.  For each $(n,r)$-sequence $\underline{r}$,
let $[\underline{r}]$ be a formal symbol.  Let $\mathcal{G}(n,r)$ be
the vector space, over a field $F$ of characteristic zero, that
consists of all formal linear combinations, with coefficients in $F$,
of the symbols $[\underline{r}]$.  For a matroid $M$ of rank $r$ on
$n$ elements, its \emph{$\mathcal{G}$-invariant} is defined by
\begin{equation}\label{eq:GDef}
  \mathcal{G}(M) = \sum_{\pi} [\underline{r}(\pi)]
\end{equation}
where the sum is over all $n!$ permutations $\pi$ of $E(M)$.  The
universality result of Derksen and Fink \cite{DerksenFink} says that
one obtains any valuative invariant $f$ with values in an abelian
group $A$ by assigning values in $A$ to each symbol $[\underline{r}]$
and extending linearly, that is, getting $f(M)$ by replacing each
symbol $[\underline{r}(\pi)]$ in Equation (\ref{eq:GDef}) by its image
$f([\underline{r}(\pi)])$ in $A$.

We will use the following result from \cite[Proposition
4.11]{catdata}.

\begin{lemma}\label{lem:Gfrpr}
  For matroids $M$ and $N$ on disjoint ground sets,
  $\mathcal{G}(M\frp N)$ can be calculated from $\mathcal{G}(M)$ and
  $\mathcal{G}(N)$.
\end{lemma}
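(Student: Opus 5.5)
The plan is to give an explicit bilinear formula: $\mathcal{G}(M\frp N)$ is obtained by applying a fixed bilinear map, depending only on $|E(M)|$, $r(M)$, $|E(N)|$ and $r(N)$, to $\mathcal{G}(M)$ and $\mathcal{G}(N)$. It then suffices to show that the rank sequence of a permutation of $E(M)\cup E(N)$ in $M\frp N$ depends only on three pieces of data: the rank sequences of its restrictions to $E(M)$ and to $E(N)$, and the pattern in which those two restrictions are interleaved.

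\textbf{Step 1 (rank formula).} For $I\subseteq E(M)$ and $J\subseteq E(N)$, I will show from the definition of the independent sets of $M\frp N$ that
$$r_{M\frp N}(I\cup J)=\min\{\,r_M(I)+|J|,\; r(M)+r_N(J)\,\}.$$
\emph{Upper bound.} Any independent subset of $I\cup J$ has the form $I'\cup J'$ with $I'$ independent in $M$ and $J'\subseteq J$. Its size is at most $r_M(I)+|J'|\le r_M(I)+|J|$, and by the defining constraint it is at most $r(M)+r_N(J')\le r(M)+r_N(J)$.

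\emph{Lower bound.} Take a basis $I'$ of $M|I$, and grow $J'\subseteq J$ one element at a time. At each step $|I'\cup J'|$ increases by exactly $1$, while $r(M)+r_N(J')$ increases by $0$ or $1$. Hence the constraint $|I'\cup J'|\le r(M)+r_N(J')$ can fail only once the size has reached the current bound. Adding elements greedily, preferring elements that raise $r_N(J')$ (for instance, a basis of $N|J$ first), lets us attain the minimum. This greedy bookkeeping is the one place where care is needed, and I expect it to be the main, though modest, obstacle.

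\textbf{Step 2 (rank sequences).} Let $\pi$ be a permutation of $E(M)\cup E(N)$, with $|E(M)|=m$ and $|E(N)|=n'$. Let $w\in\{M,N\}^{m+n'}$ record which ground set each entry of $\pi$ comes from. Let $\sigma=\pi|E(M)$ and $\tau=\pi|E(N)$ be the induced permutations.

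The prefix of $\pi$ of length $j$ is $I_j\cup J_j$, where $I_j$ is a prefix of $\sigma$ and $J_j$ is a prefix of $\tau$; their lengths are read off from $w$. The quantity $r_M(I_j)$ is a partial sum of $\underline{r}(\sigma)$, and $r_N(J_j)$ is a partial sum of $\underline{r}(\tau)$. Moreover, $r(M)$ is the total of $\underline{r}(\sigma)$.

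By Step 1, every prefix rank of $\pi$, and hence $\underline{r}(\pi)$, equals $\Phi(w,\underline{r}(\sigma),\underline{r}(\tau))$ for a function $\Phi$ defined explicitly by the displayed minimum formula.

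\textbf{Step 3 (summing over permutations).} The map $\pi\mapsto(w,\sigma,\tau)$ is a bijection. Its domain is the set of permutations of $E(M)\cup E(N)$. Its codomain is the set of triples consisting of a word $w$ with $m$ letters $M$ and $n'$ letters $N$, a permutation $\sigma$ of $E(M)$, and a permutation $\tau$ of $E(N)$. Therefore
$$\mathcal{G}(M\frp N)=\sum_{w}\sum_{\sigma,\tau}[\Phi(w,\underline{r}(\sigma),\underline{r}(\tau))].$$
Define the bilinear map $B:\mathcal{G}(m,r(M))\times\mathcal{G}(n',r(N))\to\mathcal{G}(m+n',r(M)+r(N))$ on basis symbols by
$$B([\underline{a}],[\underline{b}])=\sum_w[\Phi(w,\underline{a},\underline{b})],$$
and extend bilinearly. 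The sum over $(\sigma,\tau)$ then factors, giving
$$\mathcal{G}(M\frp N)=B(\mathcal{G}(M),\mathcal{G}(N)).$$
This expresses $\mathcal{G}(M\frp N)$ in terms of $\mathcal{G}(M)$ and $\mathcal{G}(N)$ alone, which proves the lemma.
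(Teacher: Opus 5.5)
Your proposal is correct. The paper gives no proof of its own for this lemma; it imports it from \cite[Proposition 4.11]{catdata}. So your argument serves as a self-contained replacement rather than a reconstruction of an internal proof.

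The structure works as you describe it. Step~1 gives $r_{M\frp N}(I\cup J)=\min\{r_M(I)+|J|,\,r(M)+r_N(J)\}$. From this, the rank sequence of $\pi$ is a function of the interleaving word together with the two induced rank sequences. The bijection $\pi\mapsto(w,\sigma,\tau)$ then makes $\mathcal{G}(M\frp N)$ an explicit bilinear image of $\mathcal{G}(M)$ and $\mathcal{G}(N)$. This needs no assumption that $M$ lacks coloops or $N$ lacks loops.

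Two small points would tighten the write-up.
\begin{itemize}
\item The lower bound in Step~1 need not be phrased greedily. Take a basis $I'$ of $M|I$ and a basis $B$ of $N|J$, and let $J'$ be $B$ together with any $\min\{|J|-r_N(J),\,r(M)-r_M(I)\}$ further elements of $J$. Then $r_N(J')=r_N(J)$, the defining inequality $|I'\cup J'|\le r(M)+r_N(J')$ holds, and $|I'\cup J'|$ equals the required minimum.
\item You should check that $\Phi(w,\underline{a},\underline{b})$ is always an $(m+n',r(M)+r(N))$-sequence, so that $B$ is well defined on every basis symbol. At each step the minimum rises by $0$ or $1$, and it ends at $\min\{r(M)+n',\,r(M)+r(N)\}=r(M)+r(N)$.
\end{itemize}

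For comparison, there is another route inside this paper, valid when $M$ has no coloops and $N$ has no loops. Theorem~\ref{thm:freeproductindicator} determines $\boldsymbol{v}_{M\frp N}$ from $\boldsymbol{v}_M$ and $\boldsymbol{v}_N$. Since $\mathcal{G}$ is valuative and $\mathcal{G}(K)$ and $\boldsymbol{v}_K$ determine each other, the lemma follows in that case. That route rests on the Derksen--Fink machinery and needs those hypotheses. Yours is elementary, covers all cases, and yields a concrete formula.
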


A matroid $M$ is \emph{$\mathcal{G}$-unique} if all matroids $N$ for
which $\mathcal{G}(N)=\mathcal{G}(M)$ are isomorphic to $M$.  Theorems
\ref{thm:ex2}, \ref{thm:gen4swirl}, and \ref{thm:bic} each show that
certain semi-designs are both extremal and $\mathcal{G}$-unique.

\subsection{The polytope of unlabeled matroids and extremal
  matroids}\label{subsec:polytope}

To get a counterpart, for an unlabeled matroid, of the matroid base
polytope, Ferroni and Fink \cite{LuisAlex} used the \emph{symmetrized
  indicator function} $\mathbb{I}(X):\mathbb{R}^n\to \mathbb{R}$ of a
subset $X$ of $\mathbb{R}^n$, which is given by
$$
\mathbb{I}(X)(x_1,x_2,\ldots,x_n)=\frac{1}{n!}\sum_{\sigma\in S_n}
\bar{\mathbb{I}}(X)(x_{\sigma(1)},x_{\sigma(2)},\ldots,x_{\sigma(n)})$$
where $S_n$ is the symmetric group on $[n]$.  (The bar in the notation
distinguishes between the indicator function $\bar{\mathbb{I}}$, as on
the right above, and the symmetrized indicator function $\mathbb{I}$,
as on the left.)  If $M$ and $N$ are different but isomorphic matroids
on $[n]$, then we have
$\bar{\mathbb{I}}(\mathcal{P}(M))\ne \bar{\mathbb{I}}(\mathcal{P}(N))$
while $\mathbb{I}(\mathcal{P}(M))=\mathbb{I}(\mathcal{P}(N))$; thus,
$\mathbb{I}(\mathcal{P}(M))$, unlike
$\bar{\mathbb{I}}(\mathcal{P}(M))$, is a matroid invariant.  This
justifies letting the symmetrized indicator function of an unlabeled
matroid be the symmetrized indicator function of any member of its
isomorphism class.  Fix a linear order $N_1,N_2,\ldots,N_{\binom nr}$
on the $\binom{n}{r}$ unlabeled rank-$r$ nested matroids on $[n]$.
Theorem \ref{thm:dfnested} carries over to unlabeled matroids: for any
(labeled or unlabeled) rank-$r$ matroid $M$ on $[n]$, there is a
unique vector $\boldsymbol{v}_M=(a_1,a_2,\ldots,a_{\binom nr})$ of
integers for which
\begin{equation}\label{eq:symindfcnform}
  \mathbb{I}(\mathcal{P}(M))=\sum_{i\,:\,1\leq i\leq \binom nr}
  a_i\,\mathbb{I}(\mathcal{P}(N_i)).
\end{equation}
The convex hull of the set of vectors $\boldsymbol{v}_M$, over all
rank-$r$ matroids $M$ on $[n]$, is the \emph{polytope $\Omega_{r,n}$
  of unlabeled rank-$r$ matroids on $[n]$}.  A matroid $M$ for which
$\boldsymbol{v}_M$ is a vertex of $\Omega_{r,n}$ is called
\emph{extremal}.  Extremal matroids are characterized in the following
result by Ferroni and Fink \cite[Lemma 4.3]{LuisAlex}.

\begin{thm}\label{thm:vertbyinv}
  Fix integers $r$ and $n$ with $0\leq r\leq n$.  Let
  $f_1,f_2,\ldots,f_t$ be real-valued valuative invariants.  Let $S_0$
  be the set of all unlabeled rank-$r$ matroids on $[n]$, and let
  $$S_i=\{M\in S_{i-1}\,:\, f_i(M)\geq f_i(N) \text{ for all } N\in
  S_{i-1}\}$$ for each $i\in [t]$.  If
  $\boldsymbol{v}_M=\boldsymbol{v}_N$ for all $M,N\in S_t$, then
  $\boldsymbol{v}_M$ is a vertex of $\Omega_{r,n}$, so all matroids in
  $S_t$ are extremal.
\end{thm}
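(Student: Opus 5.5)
The plan is to reduce the statement to a standard fact about polytopes: if we maximize a sequence of linear functionals one after another, the set of maximizers at each stage is a face. To use this, I first need every real-valued valuative invariant to be a linear functional of $\boldsymbol{v}_M$.

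\emph{Step 1: linearity.} Let $f$ be a real-valued valuative invariant. I claim that for every rank-$r$ matroid $M$ on $[n]$,
$$f(M)=\ell_f(\boldsymbol{v}_M),\qquad\text{where } \ell_f(x_1,\ldots,x_{\binom nr})=\sum_i x_i\, f(N_i).$$
To see this, write the labeled decomposition from Theorem \ref{thm:dfnested},
$$\bar{\mathbb{I}}(\mathcal{P}(M))=\sum_j b_j\,\bar{\mathbb{I}}(\mathcal{P}(N'_j)),$$
where the $N'_j$ are the labeled nested matroids.
\begin{itemize}
\item Since $f$ is valuative, this gives $f(M)=\sum_j b_j f(N'_j)$.
\item Symmetrizing both sides, and using that isomorphic matroids have equal symmetrized indicator functions, gives $\mathbb{I}(\mathcal{P}(M))=\sum_i\bigl(\sum_{N'_j\cong N_i}b_j\bigr)\mathbb{I}(\mathcal{P}(N_i))$.
\item By the uniqueness in Equation (\ref{eq:symindfcnform}), the $i$th entry of $\boldsymbol{v}_M$ is $\sum_{N'_j\cong N_i}b_j$.
\item Since $f$ is an invariant, $f(N'_j)=f(N_i)$ whenever $N'_j\cong N_i$. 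Grouping terms then yields $f(M)=\ell_f(\boldsymbol{v}_M)$.
\end{itemize}

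\emph{Step 2: faces.} Set $F_0=\Omega_{r,n}=\operatorname{conv}\{\boldsymbol{v}_M: M\in S_0\}$. I will show by induction on $i$ that
$$F_i:=\operatorname{conv}\{\boldsymbol{v}_M: M\in S_i\}$$
is a face of $\Omega_{r,n}$.
\begin{itemize}
\item The set $S_{i-1}$ is finite and nonempty, so $\ell_{f_i}$ attains a maximum value $m$ on $F_{i-1}$.
\item A linear functional on the convex hull of a finite set attains its maximum at some generator. So $m=\max\{f_i(M): M\in S_{i-1}\}$, by Step 1.
\item The set of points of $F_{i-1}$ where $\ell_{f_i}=m$ is a face of $F_{i-1}$. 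It equals the convex hull of those generators $\boldsymbol{v}_M$ with $\ell_{f_i}(\boldsymbol{v}_M)=m$. This holds because a convex combination attains the maximum only if every generator carrying positive weight attains it.
\item Those generators are exactly the $\boldsymbol{v}_M$ with $M\in S_i$, so this face is $F_i$.
\item A face of a face is a face, so $F_i$ is a face of $\Omega_{r,n}$.
\end{itemize}

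\emph{Step 3: conclusion.} By hypothesis, all $\boldsymbol{v}_M$ with $M\in S_t$ coincide. Hence $F_t$ is a single point $\boldsymbol{v}_M$. A zero-dimensional face is a vertex, so $\boldsymbol{v}_M$ is a vertex of $\Omega_{r,n}$ and every matroid in $S_t$ is extremal.

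The only step needing care is Step 1, namely passing valuativity through symmetrization. The point is that the definition of valuative refers to labeled indicator functions, while $\boldsymbol{v}_M$ is defined through symmetrized ones. The bridge is the grouping of labeled nested matroids into isomorphism classes together with the uniqueness statement in Equation (\ref{eq:symindfcnform}). Step 2 is the routine polytope argument.
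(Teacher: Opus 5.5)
Your proof is correct. The paper does not prove this statement; it quotes it from Ferroni and Fink \cite[Lemma 4.3]{LuisAlex}, so there is no in-paper argument to compare against.

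Your route is the natural, self-contained one. Step 1 shows that every real-valued valuative invariant $f$ agrees, on the points $\boldsymbol{v}_M$, with the linear functional $\ell_f(x)=\sum_i x_i\, f(N_i)$. Step 2 is the standard polytope fact that maximizing linear functionals one after another cuts out a descending chain of faces $\Omega_{r,n}=F_0\supseteq F_1\supseteq\cdots\supseteq F_t$. So if $F_t$ is a single point, that point is a vertex.

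The one matroid-theoretic point needing care is the one you flag. Valuativity is defined through labeled indicator functions, while $\boldsymbol{v}_M$ is defined through symmetrized ones. You handle this correctly:
\begin{itemize}
\item you apply valuativity to the labeled decomposition of Theorem \ref{thm:dfnested};
\item you symmetrize, which is a linear operation;
\item you group the labeled nested matroids by isomorphism class;
\item you invoke the uniqueness of $\boldsymbol{v}_M$ in Equation (\ref{eq:symindfcnform}).
\end{itemize}

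Step 1 also shows that $f(M)$ depends only on $\boldsymbol{v}_M$. Two consequences follow. First, matroids with the same point in $\Omega_{r,n}$ always lie in the same sets $S_i$. Second, the degenerate case where $\ell_{f_i}$ is constant on $F_{i-1}$ just gives the improper face $F_i=F_{i-1}$, so no separate treatment is needed.
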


Since $f$ is valuative if and only if $-f$ is, we can also minimize
real-valued valuative invariants.

For example, \cite[Theorem 5.2]{maxvalinv} shows that the cycle
matroid $M(K_{r+1})$ of the complete graph $K_{r+1}$ is extremal since
it is the only matroid that satisfies the following sequence of
optimization criteria, cast in the notation of Lemma
\ref{lem:listvalinv}: among rank-$r$ matroids $M$ on $\binom{r+1}{2}$
elements, maximize $F_{1,1}(M;1)$, then minimize $F_{2,2}(M;j)$ for
all $j\not\in\{2,3\}$, then minimize $F_{3,3}(M;j)$ for all
$j\not\in\{3,4,6\}$, then, for all $h$ with $4\leq h<r$, minimize
$F_{h,h}(M;j)$ for all $j>\binom{h+1}{2}$, and lastly maximize
$F_{2,2}(M;3)$.  In words: focus on simple matroids, restrict the
sizes of lines to $2$ and $3$, restrict the sizes of planes to $3$,
$4$, and $6$, and ban rank-$h$ flats that have more than
$\binom{h+1}{2}$ elements; among the matroids that remain,
$M(K_{r+1})$ is the only one that maximizes the number of $3$-point
lines.  Another example is given in Lemma \ref{lem:PMD} below.

A result that is implicit in \cite{LuisAlex} and is worth noting is
that for matroids $M$ and $N$, we have
$\boldsymbol{v}_M=\boldsymbol{v}_N$ if and only if
$\mathcal{G}(M)=\mathcal{G}(N)$, so whether $M$ is extremal is
determined by $\mathcal{G}(M)$.  Thus, all matroids that give a
particular point in $\Omega_{r,n}$ are isomorphic if and only if they
are $\mathcal{G}$-unique.  As noted at the beginning of \cite[Section
4.3]{LuisAlex}, a matroid $M$ is extremal for $\Omega_{r,n}$ if and
only if $M^*$ is extremal for $\Omega_{n-r,n}$.

\subsection{Perfect matroid designs}\label{subsec:PMD}
A \emph{perfect matroid design} is a matroid $M$ in which the size of
a flat depends only on its rank, that is, there are integers
$\alpha_0,\alpha_1,\ldots, \alpha_{r(M)}$ for which
$|F|=\alpha_{r(F)}$ for all flats $F$ of $M$.  (See \cite{deza,pmd}.)
We call $\alpha_0,\alpha_1,\ldots, \alpha_{r(M)}$ the \emph{size
  parameters} of $M$.  Examples of perfect matroid designs include
(finite) projective and affine geometries and their truncations,
uniform matroids, and paving matroids of rank $r$ on $[n]$ for which
the hyperplanes are the blocks of a Steiner system $S(r-1,k,n)$ (i.e.,
a set of $k$-subsets (blocks) of $[n]$ for which each $(r-1)$-subset
of $[n]$ is a subset of exactly one block).  As shown in \cite[Example
3.4]{catdata}, if $M$ and $N$ are perfect matroid designs with the
same size parameters, then $\mathcal{G}(M)=\mathcal{G}(N)$.  The
result below is \cite[Theorem 3.1]{maxvalinv}.

\begin{lemma}\label{lem:PMD}
  Perfect matroid designs are extremal.  If
  $S=\{\alpha_0,\alpha_1,\ldots, \alpha_r\}$, where
  $\alpha_{i-1}<\alpha_i$ for $i\in[r]$, is the set of size parameters
  of some perfect matroid design, then the matroids that maximize the
  valuative invariant $F(N;S)$ on rank-$r$ matroids $N$ on $\alpha_r$
  elements are the perfect matroid designs that have the set $S$ of
  size parameters.
\end{lemma}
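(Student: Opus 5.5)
The plan is to apply Theorem \ref{thm:vertbyinv} with the single valuative invariant $f_1(N)=F(N;S)$. It is valuative by Lemma \ref{lem:listvalinv}(1) with $h=0$, $k=r$. Two things are needed: an explicit upper bound on $F(N;S)$ over all rank-$r$ matroids $N$ on $\alpha_r$ elements, and a proof that the bound is attained exactly by the perfect matroid designs with size parameters $S$. Once that is done, every matroid in $S_1$ is a perfect matroid design with size parameters $S$. By \cite[Example 3.4]{catdata} all such designs have the same $\mathcal{G}$-invariant, hence the same vector $\boldsymbol{v}$. Theorem \ref{thm:vertbyinv} then shows they are extremal. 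Applying this to the parameter set of an arbitrary perfect matroid design proves the first sentence of the lemma.

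For the bound, I count flags $F_0\subsetneq F_1\subsetneq\cdots\subsetneq F_r$ with $r(F_i)=i$ and $|F_i|=\alpha_i$ from the bottom up. First, $F_0$ must equal $\cl(\emptyset)$, so there is at most one choice, and there is none unless $|\cl(\emptyset)|=\alpha_0$. Next, fix a flat $F_i$ of rank $i<r$. The flats of rank $i+1$ covering $F_i$ partition $E(N)-F_i$ into the sets $G-F_i$. Each cover $G$ with $|G|=\alpha_{i+1}$ uses exactly $\alpha_{i+1}-\alpha_i$ of the $\alpha_r-\alpha_i$ elements. So the number of admissible choices of $F_{i+1}$ is at most $(\alpha_r-\alpha_i)/(\alpha_{i+1}-\alpha_i)$, with equality if and only if every cover of $F_i$ has size $\alpha_{i+1}$. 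Multiplying gives
$$F(N;S)\leq \prod_{i=0}^{r-1}\frac{\alpha_r-\alpha_i}{\alpha_{i+1}-\alpha_i}.$$
The same cover-partition argument shows that a perfect matroid design with parameters $S$ attains this product. In particular the product is a positive integer.

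The step needing the most care is the converse: a maximizer $N$ must be a perfect matroid design with parameters $S$. Equality in the bound forces $|\cl(\emptyset)|=\alpha_0$, since otherwise $F(N;S)=0$ is less than the positive maximum. It also forces every flat occurring in some counted partial flag to have all of its covers of the prescribed size. I would then argue by induction on $i$ that every rank-$i$ flat has size $\alpha_i$ and lies on a counted partial flag. For the inductive step, any rank-$(i+1)$ flat covers some rank-$i$ flat. By induction that rank-$i$ flat has size $\alpha_i$ and extends $\cl(\emptyset)$ through a chain of correctly sized flats. The equality condition then gives the rank-$(i+1)$ flat size $\alpha_{i+1}$, and it again lies on a counted partial flag. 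This shows $N$ is a perfect matroid design with size parameters $S$, which completes the characterization and, via Theorem \ref{thm:vertbyinv}, the extremality claim.
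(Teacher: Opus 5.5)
Your proof is correct. The bottom-up count rests on the fact that the covers of a rank-$i$ flat $F_i$ partition $E(N)-F_i$. The equality analysis works because every factor $(\alpha_r-\alpha_i)/(\alpha_{i+1}-\alpha_i)$ is positive, so a deficit at any node of the flag tree strictly lowers the total. The final appeal to Theorem~\ref{thm:vertbyinv} together with \cite[Example 3.4]{catdata} is also sound.

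The paper does not prove this lemma; it quotes it from \cite[Theorem 3.1]{maxvalinv}. I can't compare your argument line by line with that source. What you give is a natural self-contained flag-counting proof, in the same spirit as the counting in Lemma~\ref{lem:pushtocyclicflats} and Theorem~\ref{thm:ex2}.
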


For instance, projective planes of order $q$ are extremal: they are
the rank-$3$ matroids on $q^2+q+1$ elements that maximize
$F(M;\{0,1,q+1,q^2+q+1\})$.  

\section{The indicator function of the free product of
  matroids}\label{sec:IndFuncFRP}

The first main result, Theorem \ref{thm:freeproductindicator}, shows
how to write $\mathbb{I}(\mathcal{P}(M\frp M'))$ in the form of
equation (\ref{eq:symindfcnform}) from the corresponding expressions
for $\mathbb{I}(\mathcal{P}(M))$ and $\mathbb{I}(\mathcal{P}(M'))$.

\begin{thm}\label{thm:freeproductindicator}
  Assume that the ground sets of the matroids $M$ and $M'$ are
  disjoint and that $M$ has no coloops and $M'$ has no loops.  Let
  $N_1,N_2,\ldots,N_s$ be the unlabeled nested matroids of rank $r(M)$
  on $|E(M)|$ elements, and let $N'_1,N'_2,\ldots,N'_t$ be the
  unlabeled nested matroids of rank $r(M')$ on $|E(M')|$ elements.  If
  $$\mathbb{I}(\mathcal{P}(M))=\sum_{i\in[s]}a_i\,
  \mathbb{I}(\mathcal{P}(N_i)) \qquad \text{ and } \qquad
  \mathbb{I}(\mathcal{P}(M'))=\sum_{j\in[t]}b_j\,
  \mathbb{I}(\mathcal{P}(N'_j))$$ 
  then
  $$\mathbb{I}(\mathcal{P}(M\frp M'))=\sum_{i\in[s],j\in[t]}
  a_ib_j\, \mathbb{I}(\mathcal{P}(N_i\frp N'_j)).$$ The corresponding
  statement for the indicator functions of
  matroids 
  also holds.
\end{thm}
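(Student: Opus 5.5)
We will prove the labeled statement first, using Hampe's formula for the coefficients, and then get the symmetrized statement by averaging over $S_n$. Since $M$ has no coloops, $E(M)\in\mathcal{Z}(M)$. Since $M'$ has no loops, $\emptyset\in\mathcal{Z}(M')$.

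\textbf{Lattice of cyclic flats.} By the cyclic-flat description of the free product in Section~\ref{subsec:freenested}, $\mathcal{Z}(M\frp M')$ is the ordinal sum of $\mathcal{Z}(M)$ and $\mathcal{Z}(M')$. Concretely, it is $\mathcal{Z}(M)$ stacked below $\{E(M)\cup A : A\in\mathcal{Z}(M')\}$, with $E(M)$ and $E(M)\cup\emptyset$ identified. Every cyclic flat is comparable to $E(M)$. So every chain $C$ from the least to the greatest cyclic flat contains $E(M)$. Such a chain is determined by a pair $(D,D')$, where $D$ is a full chain in $\mathcal{Z}(M)$ and $D'$ is a full chain in $\mathcal{Z}(M')$. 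This gives a poset isomorphism between the proper part of $\mathcal{C}(\mathcal{Z}(M\frp M'))$ and the product $(\mathcal{C}(\mathcal{Z}(M))-\{\hat 1\})\times(\mathcal{C}(\mathcal{Z}(M'))-\{\hat1\})$.

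\textbf{Nested matroids for each chain.} For a chain $C\leftrightarrow(D,D')$, the nested matroid $N_C$ is determined by $C$ and the ranks of its members. Theorem~\ref{thm:pinchpoint} applies with $X=E(M)$, because every member of $C$ is comparable to $E(M)$. Hence $N_C=N_D\frp N'_{D'}$, where $N_D$ and $N'_{D'}$ are the nested matroids on $E(M)$ and $E(M')$ attached to $D$ and $D'$. One can also check this directly from the cyclic-flat formula for $\frp$, since both sides have the same cyclic flats and ranks.

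\textbf{Möbius values.} This is the main step. We need $-\mu(C,\hat1)=\bigl(-\mu(D,\hat1)\bigr)\bigl(-\mu(D',\hat1)\bigr)$. The lattice $\mathcal{C}(L)$ with $\hat 1$ removed is a meet-semilattice. The interval $[C,\hat1)$ is the set of subchains refining $C$. Under the isomorphism above, $[(D,D'),\hat1)$ equals $[D,\hat1)\times[D',\hat1)$.

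For a poset $P$ with $\hat1$ adjoined, $-\mu(x,\hat1)$ equals the reduced Euler characteristic of the open interval, up to sign. More directly, $\mu(x,\hat1)=-\sum_{x\le y<\hat1}\mu(x,y)$. Products of posets have multiplicative Möbius functions. So
\[
-\mu(C,\hat 1)=\sum_{y\in[D,\hat1)\times[D',\hat1)}\mu(C,y)=\Bigl(\sum_{u}\mu(D,u)\Bigr)\Bigl(\sum_{u'}\mu(D',u')\Bigr),
\]
which is the product of the two values $-\mu(D,\hat1)$ and $-\mu(D',\hat1)$. I expect the main care to go into verifying the identification of intervals, in particular that $E(M)$ is forced in every chain.

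\textbf{Conclusion of the labeled statement.} Apply Hampe's formula to $M$, $M'$ and $M\frp M'$. Substituting the factorizations of $N_C$ and of $-\mu(C,\hat1)$ gives $\bar{\mathbb I}(\mathcal P(M\frp M'))=\sum a_Db_{D'}\,\bar{\mathbb I}(\mathcal P(N_D\frp N'_{D'}))$, which is the labeled statement.

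\textbf{Symmetrized statement.} The unlabeled coefficient $a_i$ is the sum of $-\mu(D,\hat1)$ over the chains $D$ whose nested matroid is isomorphic to $N_i$, and similarly for $b_j$. The coefficient of $\mathbb I(\mathcal P(N_i\frp N'_j))$ for $M\frp M'$ is the sum over pairs $(D,D')$ with $N_D\frp N'_{D'}\cong N_i\frp N'_j$. Sizes of the factors match, so Theorem~\ref{thm:frpuniquefactorization} gives $N_D\cong N_i$ and $N'_{D'}\cong N'_j$. The sum therefore factors as $a_ib_j$. Finally, distinct pairs $(i,j)$ give non-isomorphic products, again by Theorem~\ref{thm:frpuniquefactorization}, so no terms coincide and the stated expansion holds.
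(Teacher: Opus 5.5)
\textbf{Gap: not every chain contains $E(M)$.} Your first step asserts that every chain from the least to the greatest cyclic flat of $M\frp M'$ contains $E(M)$, and this is false. The elements of $\mathcal{C}(\mathcal{Z}(M\frp M'))$ are \emph{all} chains that contain the least and greatest cyclic flats, not only maximal chains. The fact that $E(M)$ is comparable to every cyclic flat forces it into the maximal chains only.

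Two examples show this:
\begin{itemize}
\item In $U_{1,2}\frp U_{1,2}$, the chain $\{\emptyset,E\}$ avoids $E(M)$.
\item In Figure~\ref{fig:MuExample}, where $A=E(M)$, the chains $\{\emptyset,E\}$, $\{\emptyset,X,E\}$, $\{\emptyset,Y,E\}$ and $\{\emptyset,Z,E\}$ all avoid $A$.
\end{itemize}
So the proper part of $\mathcal{C}(\mathcal{Z}(M\frp M'))$ is not the product of the two proper parts. Only the up-set of chains containing $E(M)$ is.

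These extra chains do occur in Hampe's expansion of $\bar{\mathbb{I}}(\mathcal{P}(M\frp M'))$. Their nested matroids are not of the form $N_D\frp N'_{D'}$: for $U_{1,2}\frp U_{1,2}$, the chain $\{\emptyset,E\}$ gives $U_{2,4}$. Your derivation is therefore incomplete until you show that $\mu(C,\hat 1)=0$ for every chain $C$ with $E(M)\notin C$. This is exactly the second half of the paper's proof, which does it by induction on $k(C)$, using the M\"obius recurrence on $[C\cup\{E(M)\},\hat 1]$. The step you flagged as needing ``the main care'' is aimed at a statement that is not true.

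\textbf{Fix within your framework.} For a chain $y\neq\hat 1$ with $C\subseteq y$, every subset of $y$ containing $C$ is again a chain in $\mathcal{C}$. So $[C,y]$ is Boolean, and $\mu(C,y)=(-1)^{|y-C|}$. Your recurrence then gives $\mu(C,\hat 1)=-\sum_{y}(-1)^{|y-C|}$, summed over chains $y\supseteq C$.

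Now suppose $E(M)\notin C$. The map $y\mapsto y\triangle\{E(M)\}$ is a fixed-point-free, sign-reversing involution on these chains. It is well defined because $E(M)$ is comparable to every cyclic flat and, not lying in $C$, is neither the least nor the greatest cyclic flat. Hence the sum is $0$.

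\textbf{The rest of your argument is sound.} For chains containing $E(M)$, your product-of-posets computation of $\mu$ is a valid substitute for Lemma~\ref{lem:dproduct}. The identification $N_C=N_D\frp N'_{D'}$ via Theorem~\ref{thm:pinchpoint} is right, and so is the symmetrization by grouping isomorphism classes. Theorem~\ref{thm:frpuniquefactorization} is not needed for the displayed identity itself; it only shows that the displayed coefficients are the unique ones.
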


As noted earlier, free products of nested matroids are nested, so each
$N_i\frp N'_j$ is nested.  To prepare for the proof of Theorem
\ref{thm:freeproductindicator}, we treat a binary operation on
lattices.  Bennett \cite{MK} defined and studied the order-dual of
this operation, calling it the rectangular product of lattices.  Given
lattices $L_1$ and $L_2$ with greatest elements $\hat{1}_1$ and
$\hat{1}_2$ respectively, let $L_i^\circ=L_i-\{\hat{1}_i\}$, for
$i\in[2]$.  Let
$L= (L_1^\circ\times L_2^\circ)\cup\{(\hat{1}_1,\hat{1}_2)\}$.  The
\emph{d-product} of $L_1$ and $L_2$ is the lattice on $L$ that is
induced by the direct product, that is, for $(a,b),(c,d)\in L$, we
have $(a,b)\leq_L(c,d)$ if and only if $a\leq_{L_1} c$ and
$b\leq_{L_2} d$.  It is easy to see that $L$ is a lattice.  In the
next lemma, we identify the values
$\mu_L((a,b),(\hat{1}_1,\hat{1}_2))$ of the M\"obius function of $L$
for $(a,b)\in L_1^\circ\times L_2^\circ$.  As noted in \cite{MK}, the
order-dual, the rectangular product, arises naturally in various
contexts; for instance, the face lattice of the product
$P_1\times P_2$ of two polytopes is the rectangular product of the
face lattices of $P_1$ and $P_2$.  The lemma below therefore may be
known, and it can surely be seen through different lenses (e.g., Euler
characteristics).  We give a simple proof to make this paper
self-contained.

\begin{lemma}\label{lem:dproduct}
  Let $L_1$ and $L_2$ be lattices, let $L$ be their d-product, and let
  $\mu_1$, $\mu_2$, and $\mu_L$ be the corresponding M\"obius
  functions.  If $(a,b)\in L_1^\circ\times L_2^\circ$, then
  $$\mu_L((a,b),(\hat{1}_1,\hat{1}_2)) =
  -\mu_1(a,\hat{1}_1)\mu_2(b,\hat{1}_2).$$ 
\end{lemma}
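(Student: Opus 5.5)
I will prove the formula by downward induction on $(a,b)$ in $L_1^\circ\times L_2^\circ$, using the recursive definition of the M\"obius function and the fact that the interval above $(a,b)$ in $L$ is $\bigl([a,\hat{1}_1)\times[b,\hat{1}_2)\bigr)\cup\{(\hat{1}_1,\hat{1}_2)\}$. To streamline notation, write $\hat{1}=(\hat{1}_1,\hat{1}_2)$. Also set $f(x,y)=-\mu_1(x,\hat{1}_1)\mu_2(y,\hat{1}_2)$ for $(x,y)\in L_1^\circ\times L_2^\circ$. The claim is then that $\mu_L((a,b),\hat{1})=f(a,b)$.

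The base of the induction is the set of maximal elements $(a,b)$ of $L_1^\circ\times L_2^\circ$, that is, $a$ is a coatom of $L_1$ and $b$ is a coatom of $L_2$. For such $(a,b)$ the only element strictly above it in $L$ is $\hat{1}$, so $\mu_L((a,b),\hat{1})=-1$. This agrees with $f(a,b)=-(-1)(-1)=-1$.

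For the inductive step, recursion gives
$$\mu_L((a,b),\hat{1})=-1-\sum_{(x,y)} \mu_L((x,y),\hat{1}),$$
where the sum runs over $(x,y)\in L_1^\circ\times L_2^\circ$ with $a\le x$, $b\le y$ and $(x,y)\neq(a,b)$. By induction, each term in this sum equals $f(x,y)$. The key computation is the product sum
$$\sum_{\substack{a\le x<\hat{1}_1\\ b\le y<\hat{1}_2}}\mu_1(x,\hat{1}_1)\mu_2(y,\hat{1}_2) =\Bigl(\sum_{a\le x<\hat{1}_1}\mu_1(x,\hat{1}_1)\Bigr)\Bigl(\sum_{b\le y<\hat{1}_2}\mu_2(y,\hat{1}_2)\Bigr).$$
Because $a<\hat{1}_1$, we have $\sum_{a\le x\le \hat{1}_1}\mu_1(x,\hat{1}_1)=0$. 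Removing the $x=\hat{1}_1$ term, which equals $1$, shows that the first factor is $-1$. The second factor is $-1$ for the same reason. The full sum over the rectangle is therefore $1$.

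Split this rectangle sum as $-f(a,b)$ plus the sum over $(x,y)\neq(a,b)$. This gives
$$\sum_{(x,y)\ne(a,b)} f(x,y) = -\bigl(1+f(a,b)\bigr).$$
Substituting into the recursion yields $\mu_L((a,b),\hat{1}) = -1+1+f(a,b)=f(a,b)$, which completes the induction.

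There is no real obstacle in this argument; the only point needing care is bookkeeping. The top element of $L$ contributes the separate $-1$ term and must be excluded from the product sum, and the factorization requires that $a\neq\hat{1}_1$ and $b\neq\hat{1}_2$. Both conditions hold because $(a,b)\in L_1^\circ\times L_2^\circ$.
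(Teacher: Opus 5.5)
Your proof is correct and is essentially the paper's argument: both show that $-\mu_1(x,\hat{1}_1)\mu_2(y,\hat{1}_2)$ satisfies the defining M\"obius recursion on the interval above $(a,b)$, by factoring the sum over $[a,\hat{1}_1)\times[b,\hat{1}_2)$ into two one-variable M\"obius sums. The paper phrases this as the candidate function satisfying the recursion (and hence equaling $\mu_L(\cdot,\hat{1})$) rather than as an explicit downward induction, and your separate base case is already covered by your inductive step, where the remaining sum is empty.
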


\begin{proof}
  Define $f:L\to\mathbb{Z}$ by setting $f((\hat{1}_1,\hat{1}_2))=1$
  and, for $(a,b)\in L_1^\circ\times L_2^\circ$, setting
  $f((a,b)) = -\mu_1(a,\hat{1}_1)\mu_2(b,\hat{1}_2)$.  The lemma
  follows from the recursive definition of the M\"obius function by
  showing that the sum of the values of $f$ over any non-singleton
  interval $[(a,b), (\hat{1}_1,\hat{1}_2)]$ of $L$ is $0$.  That holds
  since, for any $(a,b)\in L_1^\circ\times L_2^\circ $, we have
  \begin{align*}
    f((\hat{1}_1,\hat{1}_2)) + \sum\limits_{\substack{(c,d)\in
    L_1^\circ\times L_2^\circ \,:\\ (a,b)\leq_L(c,d)}}
    f((c,d))
    & = 1 + \sum_{c \in L_1^\circ \,:\, a\leq_{L_1}c} \mu_1(c,\hat{1}_1)
      \sum_{d\in L_2^\circ  \,:\,b\leq_{L_2} d} -\mu_2(d,\hat{1}_2) \\
    & = 1 + \sum_{c \in L_1^\circ \,:\, a\leq_{L_1}c} \mu_1(c,\hat{1}_1)
      \,\mu_2(\hat{1}_2,\hat{1}_2) \\
    & =\mu_1(\hat{1}_1,\hat{1}_1)   + \sum_{c \in L_1^\circ \,:\, a\leq_{L_1}c}
      \mu_1(c,\hat{1}_1)\\
    & = 0. \qedhere
  \end{align*}
  \end{proof}

\begin{proof}[Proof of Theorem \ref{thm:freeproductindicator}]
  It suffices to prove the result for the indicator functions of
  matroids since the result for symmetrized indicator functions
  follows from that by the distributive property.  Thus, the
  coefficients of interest are M\"obius values of chains in
  $\mathcal{C}(\mathcal{Z}(M))$, $\mathcal{C}(\mathcal{Z}(M'))$, and
  $\mathcal{C}(\mathcal{Z}(M\frp M'))$.  Let $S=E(M)$ and $T=E(M')$.
  As noted after Theorem \ref{thm:dfnested}, without loss of
  generality we may assume that $M$ has no loops and $M'$ has no
  coloops.

  First consider the chains $C$ in
  $\mathcal{C}(\mathcal{Z}(M\frp M'))$ that include $S$. Such chains
  have the form
  $$\emptyset\subset F_1\subset\cdots \subset F_n\subset
  S\subset S \cup G_1\subset \cdots \subset S\cup G_{n'}\subset S\cup
  T$$ where $C_M=\{\emptyset, F_1,\ldots, F_n, S\}$ is any chain in
  $\mathcal{C}(\mathcal{Z}(M))$ and
  $C_{M'}=\{\emptyset, G_1,\ldots , G_{n'}, T\}$ is any chain in
  $\mathcal{C}(\mathcal{Z}(M'))$.  Thus, the interval $[C,\hat{1}]$ in
  $\mathcal{C}(\mathcal{Z}(M\frp M'))$ is isomorphic to the
  $d$-product of the intervals $[C_M,\hat{1}]$ in
  $\mathcal{C}(\mathcal{Z}(M))$ and $[C_{M'},\hat{1}]$ in
  $\mathcal{C}(\mathcal{Z}(M'))$.  Lemma \ref{lem:dproduct} gives
  $-\mu_{M\frp M'}(C,\hat{1}) = \mu_M(C_M,\hat{1}) \cdot
  \mu_{M'}(C_{M'},\hat{1})$, where the subscripts indicate to which of
  the lattices $\mathcal{C}(\mathcal{Z}(M\frp M'))$,
  $\mathcal{C}(\mathcal{Z}(M))$, or $\mathcal{C}(\mathcal{Z}(M'))$ a
  M\"obius function refers.  Also, the nested matroid that $C$ yields
  is the free product of the nested matroid for $C_M$ with the nested
  matroid for $C_{M'}$.  From this, the result follows once we show
  that $\mu_{M\frp M'}(C,\hat{1})=0$ for all chains
  $C\in\mathcal{C}(\mathcal{Z}(M\frp M'))$ for which $S\not\in C$.
  
  For a chain $C\in\mathcal{C}(\mathcal{Z}(M\frp M'))$ for which
  $S\not\in C$, let
  $$k(C)=\max\{|C'-C|\,:\,C' \text{ is a chain in }
  \mathcal{Z}(M\frp M') \text{ and }C\subseteq C'\}.$$ To prove that
  $\mu_{M\frp M'}(C,\hat{1})=0$, we induct on $k(C)$.  If $k(C)=1$,
  then $C\cup\{S\}$ is the only chain $X$ in
  $\mathcal{C}(\mathcal{Z}(M\frp M'))$ for which $C\subsetneq X$.  The
  recursive definition of the M\"obius function and the equality
  $\mu_{M\frp M'}(C\cup\{S\},\hat{1})+\mu_{M\frp
    M'}(\hat{1},\hat{1})=0$ give $\mu_{M\frp M'}(C,\hat{1})=0$.  Now
  assume that $k(C)>1$ and that $\mu_{M\frp M'}(C',\hat{1})=0$ for all
  $C'\in\mathcal{C}(\mathcal{Z}(M\frp M'))$ for which $S\not\in C'$
  and $k(C')<k(C)$.  The defining recurrence for the M\"obius
  function, applied to the interval $[C\cup\{S\},\hat{1}]$ in
  $\mathcal{C}(\mathcal{Z}(M\frp M'))$, gives
  $$ 0=\mu_{M\frp M'}(\hat{1},\hat{1})+
  \sum\limits_{\substack{C'\in\mathcal{C}(\mathcal{Z}(M\frp M'))\,:\\
      C\subsetneq C', S\in C}}\mu_{M\frp M'}(C',\hat{1}).$$ Omitting
  the condition $S\in C$ just adds terms that, by the induction
  hypothesis, are zero, so
  $$0=\mu_{M\frp M'}(\hat{1},\hat{1})+\sum\limits_{\substack{C'\in
      \mathcal{C}(\mathcal{Z}(M\frp M'))\,:\\ C\subsetneq
      C'}}\mu_{M\frp M'}(C',\hat{1}).$$ By the recurrence for the
  M\"obius function, the right side of this equation is
  $-\mu_{M\frp M'}(C,\hat{1})$, so $\mu_{M\frp M'}(C,\hat{1})=0$, as
  needed.
\end{proof}

By Theorem \ref{thm:frpuniquefactorization}, the only free product
factorization of the unlabeled nested matroid $N_i\frp N'_j$ in
Theorem \ref{thm:freeproductindicator} into an unlabeled matroid on
$|E(M)|$ elements and one on $|E(M')|$ elements is the factorization
into $N_i$ and $N'_j$.  With that, the corollary below follows
immediately from Theorem \ref{thm:freeproductindicator}.

\begin{cor}\label{cor:uniqueproduct}
  Let $M_1$ and $M_2$ be matroids on ground sets that are disjoint
  from that of the matroid $M$.  Assume that $M_1$ and $M_2$ have no
  coloops and that $M$ has no loops.  If
  $\mathbb{I}(\mathcal{P}(M_1\frp M))= \mathbb{I}(\mathcal{P}(M_2\frp
  M))$, then
  $\mathbb{I}(\mathcal{P}(M_1))= \mathbb{I}(\mathcal{P}(M_2))$.
  Equivalently, if $\boldsymbol{v}_{M_1}\ne \boldsymbol{v}_{M_2}$,
  then $\boldsymbol{v}_{M_1\frp M}\ne \boldsymbol{v}_{M_2\frp M}$.
  Likewise, if $\boldsymbol{v}_{M_1}\ne \boldsymbol{v}_{M_2}$, then
  $\boldsymbol{v}_{M\frp M_1}\ne \boldsymbol{v}_{M\frp M_2}$.
\end{cor}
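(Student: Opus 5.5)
The plan is to compare coefficient vectors via Theorem \ref{thm:freeproductindicator}, using Theorem \ref{thm:frpuniquefactorization} to keep the terms $N_i\frp N'_j$ from mixing. Let $N_1,\ldots,N_s$ be the unlabeled nested matroids of rank $r(M_1)$ on $|E(M_1)|$ elements, and $N'_1,\ldots,N'_t$ those of rank $r(M)$ on $|E(M)|$ elements.

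First note that equality of the symmetrized indicator functions of $M_1\frp M$ and $M_2\frp M$ forces equal rank and size. Since $(M_k\frp M)|E(M_k)=M_k$ and the contraction is $M$, we get $r(M_1)=r(M_2)$ and $|E(M_1)|=|E(M_2)|$. So both $M_1$ and $M_2$ expand over the same list $N_1,\ldots,N_s$, say with coefficients $a_i$ and $a'_i$. Let $b_j$ be the coefficients for $M$.

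Theorem \ref{thm:freeproductindicator} then gives
$$\sum_{i,j}a_ib_j\,\mathbb{I}(\mathcal{P}(N_i\frp N'_j))=\sum_{i,j}a'_ib_j\,\mathbb{I}(\mathcal{P}(N_i\frp N'_j)).$$
The key step is that the unlabeled nested matroids $N_i\frp N'_j$ are pairwise distinct as $(i,j)$ varies. This holds because, by Theorem \ref{thm:frpuniquefactorization}, an isomorphism $N_i\frp N'_j\cong N_{i'}\frp N'_{j'}$ with $|E(N_i)|=|E(N_{i'})|$ forces $N_i\cong N_{i'}$ and $N'_j\cong N'_{j'}$. The $N_i\frp N'_j$ are therefore distinct members of the basis of unlabeled rank-$(r(M_1)+r(M))$ nested matroids. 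By the uniqueness in Equation (\ref{eq:symindfcnform}), $a_ib_j=a'_ib_j$ for all $i,j$.

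Since $\mathbb{I}(\mathcal{P}(M))\neq 0$, some $b_j\ne 0$; dividing by it gives $a_i=a'_i$ for all $i$, so $\boldsymbol{v}_{M_1}=\boldsymbol{v}_{M_2}$. The contrapositive is the second formulation. The statement for $M\frp M_1$ versus $M\frp M_2$ follows by the same argument with the factors in the other order; the size hypothesis in Theorem \ref{thm:frpuniquefactorization} is again met because $M$ is the common first factor. Alternatively one can dualize, using $(M\frp N)^*=N^*\frp M^*$ and the fact that taking duals is a bijection on the unlabeled nested matroids that respects the expansion.

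The main obstacle I expect is hypothesis bookkeeping. Theorem \ref{thm:freeproductindicator} requires the left factor to have no coloops and the right factor to have no loops. For the last sentence this means $M$ should have no coloops and $M_1,M_2$ no loops, which reverses the hypotheses stated in the corollary. To handle this I would use the remark that every nested matroid with nonzero coefficient has the same loops and coloops as the matroid itself. Combined with associativity of the free product and the identities $N\frp U_{0,1}$ (free extension) and $U_{1,1}\frp N$ (free coextension), this lets me peel off stray loops or coloops, or else pass to duals, before applying Theorem \ref{thm:freeproductindicator}.
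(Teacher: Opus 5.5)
Your argument is the paper's: expand both free products with Theorem~\ref{thm:freeproductindicator}, use Theorem~\ref{thm:frpuniquefactorization} to see that the nested matroids $N_i\frp N'_j$ are pairwise non-isomorphic, and compare coefficients. Your rank/size check and your division by a nonzero $b_j$ spell out details the paper calls immediate.

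The hypothesis mismatch you flag for the last sentence is real; the paper just says ``likewise''. Your peeling route handles it: after splitting $M=M_0\oplus U_{c,c}$ and $M_k=U_{0,l_k}\oplus M_{k,0}$, the coloops of $M$ and the loops of $M_k$ merge into a uniform factor $U_{c,c+l_k}$, and the same product formula comes out. Passing to duals alone does not help, because $(M\frp M_k)^*=M_k^*\frp M^*$ again only guarantees a loop-free left factor and a coloop-free right factor. Theorem~\ref{thm:freeproductindicator} needs the reverse: a coloop-free left factor and a loop-free right factor.
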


We offer another perspective on Corollary \ref{cor:uniqueproduct}: we
can compute $\mathcal{G}(M)$ and $\mathcal{G}(M')$ from
$\mathcal{G}(M\frp M')$ along with any one of $r(M)$, $r(M')$,
$|E(M)|$, and $|E(M')|$.  (Recall the comments at the end of Section
\ref{subsec:polytope} connecting $\mathcal{G}(M)$ and
$\boldsymbol{v}_M$.)  We focus on the case in which $M$ has no coloops
and $M'$ has no loops; treating the general case from that is routine
(see \cite[Proposition 4.3]{catdata}).  Now $E(M)$ is the only
rank-$r(M)$ cyclic flat of $M\frp M'$ and it contains all cyclic flats
of lower rank, so $E(M)$ is the unique largest set in $M\frp M'$ of
rank $r(M)$.  The terms $[\underline{r}(\pi)]$ in
$\mathcal{G}(M\frp M')$ in which exactly $r(M)$ of the first $|E(M)|$
entries are $1$ come from the permutations $\pi$ of $E(M\frp M')$ in
which the elements of $E(M)$ are the first $|E(M)|$ entries.
Therefore taking the sum of all such terms $[\underline{r}(\pi)]$ in
$\mathcal{G}(M\frp M')$ and truncating each $[\underline{r}(\pi)]$ to
its first $|E(M)|$ entries gives $|E(M')|!\cdot \mathcal{G}(M)$, so we
get $\mathcal{G}(M)$.  Similarly we get $\mathcal{G}(M')$.  We note
that, in contrast, it seems likely to be considerably harder, if it is
even feasible, to compute $\mathcal{G}(M)$ and $\mathcal{G}(M')$ from
$\mathcal{G}(M\oplus M')$ along with $r(M)$, $r(M')$, $|E(M)|$, or
$|E(M')|$.

Theorem \ref{thm:freeextremalcomponents} is the second main result of
this section.  We formulate it for free products of two matroids, but
a routine induction argument extends the result to free products of
any number of matroids.

\begin{thm}\label{thm:freeextremalcomponents}
  Let $M$ and $M'$ be matroids on disjoint ground sets.  Assume that
  $M$ has no coloops and that $M'$ has no loops.  If $M\frp M'$ is
  extremal, then $M$ and $M'$ are extremal.
\end{thm}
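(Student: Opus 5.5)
We argue by contraposition, treating $M$ first; the case of $M'$ is symmetric. Suppose $M$ is not extremal, so $\boldsymbol{v}_M$ is not a vertex of $\Omega_{r(M),|E(M)|}$. Then $\boldsymbol{v}_M$ is a nontrivial convex combination of vertices. Since $\boldsymbol{v}_M$ has rational coordinates and the vertices $\boldsymbol{v}_{M_1},\ldots,\boldsymbol{v}_{M_k}$ are integer vectors, the coefficients can be taken rational and positive. We may take $k\ge 2$ with the $\boldsymbol{v}_{M_i}$ distinct from each other and from $\boldsymbol{v}_M$. This gives $\boldsymbol{v}_M=\sum_i \lambda_i\boldsymbol{v}_{M_i}$ with $\lambda_i>0$ and $\sum\lambda_i=1$.

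We need to be able to form free products with the $M_i$. As noted after Theorem \ref{thm:dfnested}, any nested matroid appearing with nonzero coefficient in $\boldsymbol{v}_M$ has the same number of coloops as $M$, namely none. However, the $M_i$ themselves might have coloops. To handle this, we use the free-product formulation directly. The map $\boldsymbol{v}_N\mapsto\boldsymbol{v}_{N\frp M'}$ is induced by the linear map $\Phi$ that sends the unit vector of each nested $N_i$ to the unit vector of $N_i\frp N'$, extended linearly against the fixed vector $\boldsymbol{b}=\boldsymbol{v}_{M'}$. Concretely,
$$\Phi(\boldsymbol{a})=\sum_{i,j}a_ib_j\,e_{N_i\frp N'_j}.$$

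Theorem \ref{thm:freeproductindicator} says $\Phi(\boldsymbol{v}_N)=\boldsymbol{v}_{N\frp M'}$ whenever $N$ has no coloops. For general $N$, the free product reduces to this case via associativity, and since $\frp$ of nested matroids with coloop factors $U_{1,1}$ still yields the corresponding nested matroids, the same bilinear formula should hold. We will verify this reduction, using that $\frp$ is associative and that $U_{1,1}\frp U_{0,1}$ behaves compatibly.

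Granting this, linearity gives
$$\boldsymbol{v}_{M\frp M'}=\Phi(\boldsymbol{v}_M)=\sum_i\lambda_i\,\boldsymbol{v}_{M_i\frp M'}.$$
By Corollary \ref{cor:uniqueproduct} (more precisely, the injectivity of $\Phi$ on vectors arising from matroids, which follows from Theorem \ref{thm:frpuniquefactorization}), the vectors $\boldsymbol{v}_{M_i\frp M'}$ are pairwise distinct and differ from $\boldsymbol{v}_{M\frp M'}$. Thus $\boldsymbol{v}_{M\frp M'}$ is a proper convex combination of at least two distinct points of $\Omega_{r(M)+r(M'),|E(M)|+|E(M')|}$, so it is not a vertex, and $M\frp M'$ is not extremal.

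Injectivity is the cleanest step. The vectors $e_{N_i\frp N'_j}$ are distinct for distinct pairs $(i,j)$ by unique factorization, so $\Phi$ is the tensor map $\boldsymbol{a}\mapsto\boldsymbol{a}\otimes\boldsymbol{b}$ placed into coordinates. This map is injective since $\boldsymbol{b}\ne 0$; indeed $\boldsymbol{b}$ is nonzero because indicator functions are nonzero. That yields distinctness without needing Corollary \ref{cor:uniqueproduct} separately.

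For $M'$, the argument is the same, using $\boldsymbol{a}\mapsto \boldsymbol{v}_M\otimes\boldsymbol{a}$. Alternatively, we can dualize: $(M\frp M')^*=M'^*\frp M^*$, and extremality is preserved under duality.

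The main obstacle I expect is the coloop/loop issue for the competing matroids $M_i$ (respectively loops in the $M'_j$), where Theorem \ref{thm:freeproductindicator} is not stated directly. One might instead try to avoid it entirely by working purely with vectors. Since $\Phi$ is linear and injective and maps matroid points into $\Omega$, it suffices that $\Phi$ maps each vertex $\boldsymbol{v}_{M_i}$ to a point of the larger polytope. For this we need $\Phi(\boldsymbol{v}_{M_i})$ to equal $\boldsymbol{v}_K$ for some matroid $K$, and that again is the extension of Theorem \ref{thm:freeproductindicator}, which I would prove via the reduction $N=N_0\oplus U_{c,c}$ with $N_0$ coloop-free.
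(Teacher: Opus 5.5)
Your route is the paper's. You argue contrapositively, write $\boldsymbol{v}_M=\sum_i\lambda_i\boldsymbol{v}_{M_i}$ as a proper convex combination, push it through the bilinear formula of Theorem~\ref{thm:freeproductindicator}, and get distinctness from unique factorization. Your injectivity of $\Phi$ is precisely the content of Corollary~\ref{cor:uniqueproduct}, and you handle $M'$ by duality, as the paper does.

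The one step you leave open is a real one. Theorem~\ref{thm:freeproductindicator} and Corollary~\ref{cor:uniqueproduct} require the left factor to be coloop-free, and nothing in your setup prevents a competitor $M_i$ from having coloops. The paper's proof applies these results to its $K_h$ without comment, so you are right to flag it. As written, however, your proof only asserts that the extended formula ``should hold.'' Your proposed reduction is also not purely formal. After writing $N\frp M'=N_0\frp(U_{c,c}\frp M')$, you still need $\boldsymbol{v}_{U_{c,c}\frp M'}=\sum_j b_j\,e_{U_{c,c}\frp N'_j}$, and this is again outside the theorem's hypotheses. It is true: the cyclic flats of $U_{c,c}\frp M'$ are $\emptyset$ together with the sets $E(U_{c,c})\cup A$ for nonempty $A\in\mathcal{Z}(M')$, with ranks shifted by $c$, so the chain lattices and their nested matroids correspond. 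But it is an extra computation you have not done.

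A shorter fix avoids extending the theorem altogether. For a rank-$r$ matroid $N$ on $n$ elements, $E(N)$ is the only rank-$r$ flat of size $n$. So the number of coloops of $N$ equals $\sum_c c\,f_r(N;n,c)$, which is a real-valued valuative invariant by Lemma~\ref{lem:listvalinv}. Hence it is a linear function of $\boldsymbol{v}_N$. Applying it to $\boldsymbol{v}_M=\sum_i\lambda_i\boldsymbol{v}_{M_i}$ gives $0=\sum_i\lambda_i\cdot(\text{number of coloops of }M_i)$ with every $\lambda_i>0$. Therefore every $M_i$ is coloop-free, and the rest of your argument goes through exactly as in the paper. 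The same trick with loops, or duality, handles $M'$.
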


\begin{proof}
  It suffices to prove that $M$ is extremal since duality preserves
  being extremal and $(M\frp M')^*=M'^*\frp M^*$.  For that, it
  suffices to prove that if $M$ is not extremal, then neither is
  $M\frp M'$.  Assume that $M$ is not extremal.  Thus, there are
  rank-$r(M)$ matroids $K_1,K_2,\ldots,K_u$ on $|E(M)|$ elements,
  where $u\geq 2$ and $\boldsymbol{v}_{K_i}\ne \boldsymbol{v}_{M}$ and
  $\boldsymbol{v}_{K_i}\ne \boldsymbol{v}_{K_j}$ for all
  $\{i,j\}\subseteq[u]$, along with positive real numbers
  $c_1,c_2,\ldots,c_u$ for which $c_1+c_2+\cdots+c_u = 1$ and
  \begin{equation}\label{eq:convexcomb}
    \mathbb{I}(\mathcal{P}(M))=\sum_{h\in[u]}c_h\,
    \mathbb{I}(\mathcal{P}(K_h)).
  \end{equation}
  Corollary \ref{cor:uniqueproduct} gives
  $\boldsymbol{v}_{K_i\frp M'}\ne \boldsymbol{v}_{M\frp M'}$ and
  $\boldsymbol{v}_{K_i\frp M'}\ne \boldsymbol{v}_{K_j\frp M'}$ for all
  $\{i,j\}\subseteq[u]$, so we can conclude that $M\frp M'$ is not
  extremal by showing the equality
  \begin{equation}\label{eq:convexcombfrp}
    \mathbb{I}(\mathcal{P}(M\frp M'))=\sum_{h\in[u]}c_h\,
    \mathbb{I}(\mathcal{P}(K_h\frp M')).
  \end{equation}  
  To carry this out, we write each of $\mathbb{I}(\mathcal{P}(M))$,
  $\mathbb{I}(\mathcal{P}(M'))$, and $\mathbb{I}(\mathcal{P}(K_h))$ as
  in equation (\ref{eq:symindfcnform}) and then use Theorem
  \ref{thm:freeproductindicator} to relate the two sides of equation
  (\ref{eq:convexcombfrp}).  Let
  $$\mathbb{I}(\mathcal{P}(M))=\sum_{i\in[s]}a_i\,
  \mathbb{I}(\mathcal{P}(N_i)), \qquad
  \mathbb{I}(\mathcal{P}(M'))=\sum_{j\in[t]}b_j\,
  \mathbb{I}(\mathcal{P}(N'_j)),$$ and
  $$\mathbb{I}(\mathcal{P}(K_h))=\sum_{i\in[s]}d_{h,i}\,
  \mathbb{I}(\mathcal{P}(N_i)),$$ where $N_1,N_2,\ldots,N_s$ are the
  unlabeled rank-$r(M)$ nested matroids on $|E(M)|$ elements, and
  $N'_1,N'_2,\ldots,N'_t$ are the unlabeled rank-$r(M')$ nested
  matroids on $|E(M')|$ elements.  Substituting the expression for
  $\mathbb{I}(\mathcal{P}(K_h))$ in the right side of equation
  (\ref{eq:convexcomb}) gives
  $$\mathbb{I}(\mathcal{P}(M))=\sum_{h\in[u], i\in[s]}c_h\,
  d_{h,i}\, \mathbb{I}(\mathcal{P}(N_i)),$$ so by the uniqueness of
  the coefficients,
  $$a_i = \sum_{h\in[u]}c_h\, d_{h,i}$$ for all $i\in [s]$.
  By Theorem \ref{thm:freeproductindicator},
  \begin{align*}
    \sum_{h\in[u]}c_h\, \mathbb{I}(\mathcal{P}(K_h\frp M'))
    & = \sum_{h\in[u]}c_h\sum_{i\in[s],j\in[t]} d_{h,i}\,b_j\,
      \mathbb{I}(\mathcal{P}(N_i\frp N'_j)) \\
    & = \sum_{j\in[t]} b_j  \sum_{i\in[s]} \sum_{h\in[u]}
      c_h\, d_{h,i}\, \mathbb{I}(\mathcal{P}(N_i\frp N'_j)) \\
     & = \sum_{j\in[t]} b_j  \sum_{i\in[s]} 
       a_i\, \mathbb{I}(\mathcal{P}(N_i\frp N'_j)) \\
    & =   \mathbb{I}(\mathcal{P}(M\frp M')),
  \end{align*}
  so $M\frp M'$ is not extremal.
\end{proof}

The converse of Theorem \ref{thm:freeextremalcomponents} is false; for
instance, $U_{1,2}$ is extremal but $U_{1,2}\frp U_{1,2}$ is not.  In
the next section, we develop some sufficient conditions under which
having $M$ and $M'$ be extremal guarantees that $M\frp M'$ is
extremal.

\section{Semi-designs}\label{sec:SD}

Recall from Section \ref{sec:intro} that, for a matroid $M$, we let
$R_{\mathcal{Z}(M)}$ (or $R_{\mathcal{Z}}$ if $M$ is the only matroid
under discussion) be $\{r(A)\,:\,A\in\mathcal{Z}(M)\}$, and that $M$
is a \emph{semi-design} if there are integers $c_i$, for each
  $i\in R_{\mathcal{Z}}$, such that
\begin{itemize}
\item[(SD1)] $|A|=c_{r(A)}$ for all $A\in\mathcal{Z}(M)$ and
\item[(SD2)] if $i,j\in R_{\mathcal{Z}}$ and $i>j$, then
  $c_i-i> c_j-j$.
\end{itemize}
This section starts with lemmas that are useful for working with
semi-designs.  The main result of this section is Theorem
\ref{thm:frpbeyondPMD}, which gives sufficient conditions for free
products of extremal semi-designs to be extremal, and which we
illustrate with its applications to free products of affine and
projective planes (Corollary \ref{cor:pps}) and to nested matroids
(Corollary \ref{cor:nested}).  While it does not address the converse
of Corollary \ref{cor:nested}, Theorem \ref{thm:notext} identifies an
option for the failure of the hypotheses in that corollary that allows
one to deduce that a nested matroid is not extremal.

\begin{lemma}\label{lem:msubiext}
  Let $M$ be a rank-$r$ semi-design and let $c_i$ be as above.  For
  each $k\in[0,r]$, let $m_k$ be the maximum size among the rank-$k$
  flats of $M$.  Fix an integer $i\in[r]$ and let
  $j=\max\{k\,:\, k\in R_{\mathcal{Z}},\, k< i\}$.
  \begin{itemize}
  \item[(1)] If $i\not\in R_{\mathcal{Z}}$, then $m_i=c_j+i-j$.  Also,
    a rank-$i$ flat has $m_i$ elements if and only if it contains a
    cyclic flat of rank $j$.
  \item[(2)] If $i\in R_{\mathcal{Z}}$, then $m_i=c_i$.  Also, a
    rank-$i$ flat that is not cyclic has at most $m_j+i-j$ elements.
  \item[(3)] We have $i\in R_{\mathcal{Z}}$ if and only if
    $m_i>m_{i-1}+1$.
  \end{itemize}
\end{lemma}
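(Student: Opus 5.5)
The basic tool is the following observation. Every flat $F$ of $M$ contains a largest cyclic flat, namely the union $Z$ of the circuits in $F$. This $Z$ is closed, because $\cl(Z)\subseteq F$ and every element of $\cl(Z)-Z$ would lie in a circuit inside $F$. The elements of $F-Z$ are coloops of $M|F$, so
$$|F|=|Z|+(r(F)-r(Z)), \qquad\text{that is,}\qquad \eta(F)=\eta(Z)=c_{r(Z)}-r(Z).$$
So maximizing $|F|$ among rank-$i$ flats is the same as maximizing $c_k-k$ over the ranks $k$ of cyclic flats $Z\subseteq F$, where $k\le i$. By (SD2), $c_k-k$ is strictly increasing in $k\in R_{\mathcal{Z}}$. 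The least cyclic flat has rank $0$, so $0\in R_{\mathcal{Z}}$ and $j$ is well defined.

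For (1), assume $i\notin R_{\mathcal{Z}}$. Then the cyclic flat $Z$ of any rank-$i$ flat has rank $k\le j$, so $|F|\le c_k+i-k\le c_j+i-j$. Equality holds if and only if $r(Z)=j$, by strict monotonicity. If $F$ merely contains some rank-$j$ cyclic flat $A$, then $A\subseteq Z$ and $r(Z)<i$ force $r(Z)=j$, again by strictness. To show the bound is attained, take a rank-$j$ cyclic flat $A$ and extend it by coloops to a rank-$i$ flat $F$: choose a chain of flats $A=F_j\subsetneq\cdots\subsetneq F_i$ with each $F_{k+1}$ covering $F_k$. I need the largest cyclic flat of $F$ to be $A$. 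It is some cyclic flat of rank at most $j$ (rank $<i$ and not in $(j,i)$) containing $A$, and (Z2) forbids a proper cyclic superset of $A$ of equal rank, so it is $A$. Hence $|F|=c_j+i-j$. This needs $j<i\le r$ only.

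For (2), assume $i\in R_{\mathcal{Z}}$. A cyclic rank-$i$ flat has exactly $c_i$ elements. A non-cyclic rank-$i$ flat $F$ has $r(Z)<i$, so $r(Z)\le j$ and $|F|\le c_j+i-j$. This is less than $c_i$ because $c_i-i>c_j-j$. Thus $m_i=c_i$. The stated bound $m_j+i-j$ follows since $m_j\ge c_j$; in fact $m_j=c_j$ by the first part of (2) applied to $j$, and the case $j=0$ gives $c_0=m_0$.

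For (3), I will use the formulas just proved, with $j$ the largest element of $R_{\mathcal{Z}}$ below $i$. If $i\notin R_{\mathcal{Z}}$, then $j$ is also the relevant index for $i-1$. When $i-1=j$ this gives $m_{i-1}=c_j$; otherwise (1) gives $m_{i-1}=c_j+i-1-j$. Either way $m_i=m_{i-1}+1$. If $i\in R_{\mathcal{Z}}$, then similarly $m_{i-1}=c_j+(i-1)-j$, so $m_i-m_{i-1}=(c_i-i)-(c_j-j)+1>1$ by (SD2).

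The only delicate step is the attainment argument in (1). That requires checking that adjoining independent elements to a cyclic flat does not create a larger cyclic flat below rank $i$. The rank gap between $j$ and the next element of $R_{\mathcal{Z}}$, together with (Z2), handles it.
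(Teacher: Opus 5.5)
Your proof is correct and follows essentially the same route as the paper. Both arguments strip the coloops from a non-cyclic rank-$i$ flat $F$ to get its largest cyclic flat, of some rank $k\le j$, and then combine $|F|=c_k+i-k$ with (SD2); you are more explicit than the paper about the attainment of $m_i=c_j+i-j$ in part (1) and about the value $m_{i-1}=c_j+(i-1)-j$ used in part (3), both of which the paper leaves implicit.
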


\begin{proof}
  Let $F$ be a rank-$i$ flat of $M$ that is not cyclic.  Let $U$ be
  the set of coloops of $M|F$ and let $k$ be the rank of the cyclic
  flat $F-U$.  Then $k\leq j$ and $i=r(F)=k+|U|$, so
  $$|F| =|F-U|+|U| =c_k+i-k.$$  Parts (1) and (2) follow by observing that if
  $k=j$, then $|F|= c_j+i-j$, while if $k<j$, then $c_k-k<c_j-j$, and
  so $|F| < c_j+i-j$, and in either case, if $i\in R_{\mathcal{Z}}$,
  then $|F|<c_i$.

  For part (3), if $i\in R_{\mathcal{Z}}$, then by part (2) at least
  two elements must be removed from any rank-$i$ flat of size $m_i$ to
  get a rank-$(i-1)$ flat, so $m_i>m_{i-1}+1$.  If
  $i\not\in R_{\mathcal{Z}}$, then parts (1) and (2) give
  $m_i=m_{i-1}+1$.
\end{proof}

For a semi-design $M$, we call $m_0,m_1,\ldots,m_r$, where $m_i$ is as
in Lemma \ref{lem:msubiext}, the \emph{size parameters} of $M$, and we
let $S_M$ be $\{m_0,m_1,\ldots,m_r\}$.  We always index the elements
of $S_M$ by rank, so $m_i<m_{i+1}$.  As justified by part (2) of Lemma
\ref{lem:msubiext}, below we use $m_i$ for $c_i$ in properties (SD1)
and (SD2).  By part (3), for a semi-design $M$, we can deduce
$R_{\mathcal{Z}}$ from $S_M$.  By part (1), we can deduce $S_M$ from
$R_{\mathcal{Z}}$, the set $\{m_i\,:\, i\in R_{\mathcal{Z}}\}$, and
$r(M)$.

Note that if $S_M=S_N$, then $M$ and $N$ have the same rank,
$|S_M|-1$, and their ground sets have the same size, the largest
element of $S_M$.  If we take the same definition of the set $S_M$ for
all matroids, then we can have $S_M=S_N$ even if only one of $M$ and
$N$ is a semi-design.  For instance, if $M$ is formed by taking the
parallel connection of four copies of $U_{2,4}$ at the same base point
$p$ and then deleting $p$, then $M$ is a semi-design and
$S_M=\{0,1,3,6,9,12\}$; however, if $N$ is formed by taking the
parallel connection of three copies of $U_{2,4}$ at the same base
point $p$, then deleting $p$, then taking the direct sum with
$U_{2,3}$, and finally truncating to rank five, $N$ is not a
semi-design (there are cyclic flats of rank $4$ and size $6$), but
$S_N=S_M$.

\begin{lemma}
  If $M$ is a semi-design, then so is $M^*$.
\end{lemma}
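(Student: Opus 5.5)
Set $E=E(M)$, $n=|E|$ and $r=r(M)$. The plan is to read off the cyclic flats of $M^*$ from Lemma \ref{lem:cfdual} and then compute their sizes and ranks directly. By that lemma, $\mathcal{Z}(M^*)=\{E-A\,:\,A\in\mathcal{Z}(M)\}$. For $A\in\mathcal{Z}(M)$, the dual rank formula $r^*(X)=|X|-r+r(E-X)$ gives
$$r^*(E-A)=n-|A|-r+r(A).$$
With $m_i=c_i$, this is $r^*(E-A)=(n-r)-(m_{r(A)}-r(A))$. So the rank of $E-A$ in $M^*$ depends only on the nullity $\eta(A)=m_{r(A)}-r(A)$, which in turn depends only on $r(A)$.

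First, by (SD2) the map $i\mapsto m_i-i$ is strictly increasing on $R_{\mathcal{Z}(M)}$, so $i\mapsto (n-r)-(m_i-i)$ is injective and strictly decreasing. Hence $R_{\mathcal{Z}(M^*)}=\{(n-r)-(m_i-i)\,:\,i\in R_{\mathcal{Z}(M)}\}$, and each $k\in R_{\mathcal{Z}(M^*)}$ comes from a unique $i\in R_{\mathcal{Z}(M)}$; write $i=i(k)$.

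Next I would define $c^*_k=n-m_{i(k)}$. For every cyclic flat $E-A$ of $M^*$ with $r^*(E-A)=k$, injectivity forces $r(A)=i(k)$. Then (SD1) for $M$ gives $|E-A|=n-m_{i(k)}=c^*_k$, which is (SD1) for $M^*$.

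For (SD2), take $k>k'$ in $R_{\mathcal{Z}(M^*)}$ with $i=i(k)$ and $i'=i(k')$; since the correspondence is decreasing, $i<i'$. The nullity of $E-A$ in $M^*$ is
$$|E-A|-r^*(E-A)=(n-m_i)-(n-r-m_i+i)=r-i.$$
Therefore $c^*_k-k=r-i$ and $c^*_{k'}-k'=r-i'$. Since $i<i'$, we get $r-i>r-i'$, which is (SD2) for $M^*$.

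The only delicate point is this last step: (SD2) for $M^*$ corresponds to strict monotonicity of \emph{rank} on $\mathcal{Z}(M)$, whereas (SD2) for $M$ is used to make the rank correspondence well defined and order-reversing. Once the two roles are separated, each is a short computation.
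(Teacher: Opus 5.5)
Your proposal is correct and follows essentially the same route as the paper: you complement the cyclic flats via Lemma \ref{lem:cfdual}, and you use the dual rank formula to see that (SD2) for $M$ forces complements of cyclic flats of different ranks to have different ranks in $M^*$, which yields (SD1) for $M^*$. You then reduce (SD2) for $M^*$ to $i>j$, since the complement of a rank-$i$ cyclic flat has nullity $r-i$ in $M^*$; the only difference from the paper is cosmetic, namely writing $n=|E|$ where the paper writes $m_r$.
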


\begin{proof}
  Set $E=E(M)$, $r=r(M)$, and $S_M=\{m_0,m_1,\ldots,m_r\}$.  Recall
  Lemma \ref{lem:cfdual}.  From the equality
  $r_{M^*}(A) = |A|-r+r_M(E-A)$, the set $E-A$ has rank $i$ and size
  $m_i$ in $M$ if and only if $A$ has rank $m_r-m_i-r+i$ and size
  $m_r-m_i$ in $M^*$.  Fix $i,j\in R_{\mathcal{Z}(M)}$ for which
  $i>j$. Then $-m_i+i<-m_j+j$ since $M$ satisfies (SD2), so for cyclic
  flats of $M$ of different ranks, the cyclic flats of $M^*$ that are
  their complements have different ranks in $M^*$.  Property (SD1) for
  $M^*$ now follows.  Property (SD2) for $M^*$ requires
  that $$m_r-m_j-(m_r-m_j-r+j) > m_r-m_i -(m_r-m_i-r+i),$$ that is,
  $i>j$, so property (SD2) holds.
\end{proof}

The next lemma is immediate from the formulation of free products
using cyclic flats.

\begin{lemma}\label{lem:s1s1frp}
  Let $M$ and $N$ be matroids on disjoint ground sets.  If $M$ and $N$
  are semi-designs, then so is their free product $M\frp N$.
\end{lemma}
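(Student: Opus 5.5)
We reduce to the case in which $M$ has no coloops and $N$ has no loops, and then read off the cyclic flats of $M\frp N$ and their ranks from the cyclic-flat formulation of the free product. For the reduction, use associativity together with $M\frp U_{0,1}$ being a free extension and $U_{1,1}\frp M$ a free coextension. Concretely, if $M=M_0\oplus U_{k,k}$ with $M_0$ coloop-free, then $M\frp N$ has the same cyclic flats as the coloop-free case after freely adding coloop elements. It is cleaner to observe directly from the definition of independent sets that coloops of $M$ are not coloops of $M\frp N$ unless $N$ is free; a coloop of $M$ simply behaves like an element placed freely, so we may absorb it into $N$ via $M\frp N=M_0\frp(U_{k,k}\frp N)$. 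Here $U_{k,k}\frp N$ is an iterated free coextension of $N$, whose cyclic flats are $\emptyset$ (if $N$ has no loops) together with $E(U_{k,k})\cup A$ for nonempty $A\in\mathcal{Z}(N)$, with sizes shifted by $k$ and ranks shifted by $k$. That preserves (SD1) and (SD2), since $c-i$ is unchanged by the shift. The dual remark handles loops of $N$. So it suffices to treat $M$ coloop-free and $N$ loop-free.

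In that case $\mathcal{Z}(M\frp N)=\mathcal{Z}(M)\cup\{E(M)\cup A: A\in\mathcal{Z}(N)\}$, with $r(E(M)\cup A)=r(M)+r_N(A)$ and $|E(M)\cup A|=|E(M)|+|A|$. The two families overlap exactly in $E(M)$, which arises both as the top of $\mathcal{Z}(M)$ and as $E(M)\cup\emptyset$, where $\emptyset\in\mathcal{Z}(N)$ because $N$ is loop-free. The rank sets therefore split: ranks at most $r(M)$ come from $\mathcal{Z}(M)$, and ranks at least $r(M)$ come from the second family, the value $r(M)$ being realized only by $E(M)$. Since ranks in $\mathcal{Z}(M)$ are at most $r(M)$ and ranks in the second family are at least $r(M)$, with equality only for $E(M)$, each rank $i$ is realized by cyclic flats from only one family. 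Define $c_i=c^M_i$ for $i\le r(M)$ and $c_{r(M)+k}=|E(M)|+c^N_k$. Then (SD1) holds for $M\frp N$, consistently at $i=r(M)$, where $c^M_{r(M)}=|E(M)|=|E(M)|+c^N_0$.

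For (SD2), the nullity $c_i-i$ equals $c^M_i-i$ on the first family and $\eta(M)+(c^N_k-k)$ on the second. Within each family strict increase in rank gives strict increase in nullity, by (SD2) for $M$ and for $N$. Across the two families, every value in the first family is at most $\eta(M)$, attained only at $r(M)$, and every value in the second family is at least $\eta(M)$, attained only at $k=0$. Comparisons across the families are therefore strict whenever the ranks differ.

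The main care point is the coloop/loop reduction. I would most likely just verify it via Theorem \ref{thm:pinchpoint} or directly from the independent-set definition, checking that absorbing the coloops of $M$ into the second factor leaves $M\frp N$ unchanged and keeps the second factor a semi-design.
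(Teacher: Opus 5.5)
Your proof is correct and is essentially the paper's own argument. The paper calls the lemma immediate from $\mathcal{Z}(M\frp N)=\mathcal{Z}(M)\cup\{E(M)\cup A\,:\,A\in\mathcal{Z}(N)\}$, under its standing reduction via associativity to $M$ coloop-free and $N$ loopless, and your split of the ranks at $r(M)$ with the nullity comparison across the two families is exactly that verification; your check that $U_{k,k}\frp N$ remains a semi-design fills in what the paper leaves implicit.

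One small nit: for $k\geq 1$ the factor $U_{k,k}\frp N$ is loopless even when $N$ has loops, since its cyclic flats are $\emptyset$ and $E(U_{k,k})\cup B$ for nonempty $B\in\mathcal{Z}(N)$. So your parenthetical about $\emptyset$ is unnecessary, and the dual step for loops of $N$ is needed only when $M$ is already coloop-free.
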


Like the class of perfect matroid designs, the class of semi-designs
is not closed under either minors or direct sums.  However, it is easy
to see that the following special minors are exceptions.

\begin{lemma}\label{lem:sdminor}
  If $F$ is a cyclic flat of a semi-design $M$, then both the
  restriction $M|F$ and contraction $M/F$ are semi-designs.
\end{lemma}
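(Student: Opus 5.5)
The plan is to apply Lemma \ref{lem:cyclicflatsminors} to the two special minors $M|F = M|F/\emptyset$ and $M/F = M|E(M)/F$. Both are minors of the form $M|Y/X$ with $X\subseteq Y$ cyclic flats of $M$, since the least cyclic flat is $\emptyset$ and $E(M)$ is cyclic. If $M$ has loops, the least cyclic flat is the set of loops rather than $\emptyset$; I would either reduce to the loopless case or work with the least cyclic flat $Z_0$ in place of $\emptyset$. This adds no real difficulty: the cyclic flats of $M|F$ are exactly the cyclic flats of $M$ contained in $F$, and every cyclic flat contains $Z_0$.

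For the restriction, Lemma \ref{lem:cyclicflatsminors} with $X$ the least cyclic flat shows that $\mathcal{Z}(M|F)=\{A\in\mathcal{Z}(M)\,:\,A\subseteq F\}$, with ranks unchanged. So $R_{\mathcal{Z}(M|F)}\subseteq R_{\mathcal{Z}(M)}$. Taking $c'_i=c_i$ for $i\in R_{\mathcal{Z}(M|F)}$, property (SD1) is inherited directly, and (SD2) holds because it is the restriction of the same inequalities to a subset of indices.

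For the contraction, Lemma \ref{lem:cyclicflatsminors} with $X=F$ and $Y=E(M)$ shows that the cyclic flats of $M/F$ are the sets $A-F$ with $A\in\mathcal{Z}(M)$ and $F\subseteq A$. For such sets, $r_{M/F}(A-F)=r(A)-r(F)$ and $|A-F|=c_{r(A)}-c_{r(F)}$. Define $c'_k=c_{k+r(F)}-c_{r(F)}$ for $k\in R_{\mathcal{Z}(M/F)}$; these are well defined since $k+r(F)\in R_{\mathcal{Z}(M)}$. Then (SD1) holds. For (SD2), if $k>l$ then
$c'_k-k=(c_{k+r(F)}-(k+r(F)))-(c_{r(F)}-r(F))$, and the analogous identity holds for $l$. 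So the required inequality $c'_k-k>c'_l-l$ is exactly (SD2) for $M$ applied to $k+r(F)>l+r(F)$.

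There is no real obstacle. The only point needing care is the bookkeeping with loops and coloops, namely that the least and greatest elements of the lattices behave as expected. Alternatively, the contraction case follows from the restriction case by duality: $(M/F)^*=M^*\setminus F=M^*|(E-F)$, the set $E-F$ is a cyclic flat of $M^*$ by Lemma \ref{lem:cfdual}, and duals of semi-designs are semi-designs by the preceding lemma.
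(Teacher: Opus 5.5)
Your proposal is correct and matches the argument the paper leaves implicit (it gives no proof, only ``it is easy to see''): Lemma \ref{lem:cyclicflatsminors} identifies the cyclic flats of $M|F$ and $M/F$ with the cyclic flats of $M$ below and above $F$, and the shifted constants $c_{k+r(F)}-c_{r(F)}$ inherit (SD1) and (SD2). One small correction: $E(M)$ is a cyclic flat only when $M$ has no coloops, so for the contraction use the greatest cyclic flat of $M$ in place of $E(M)$; since $F$ contains no coloops, the coloops of $M$ stay coloops of $M/F$ and do not change its cyclic flats, which is exactly the bookkeeping you anticipated.
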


Recall the valuative invariant $F(M;S)$ that was defined after Lemma
\ref{lem:listvalinv}.  The next lemma relates $F(M;S_M)$, for a
semi-design $M$, to its counterpart $F(\mathcal{Z}(M))$ in
$\mathcal{Z}(M)$, where we define $F(\mathcal{Z}(M))$ to be the number
of flags of flats in $\mathcal{Z}(M)$ that include one flat of each
rank in $R_{\mathcal{Z}(M)}$.

\begin{lemma}\label{lem:pushtocyclicflats}
  Let $M$ and $N$ be rank-$r$ semi-designs with
  $S_M=S_N=\{m_0,m_1,\ldots,m_r\}$.  Then $F(M;S_M)\leq F(N;S_N)$ if
  and only if $F(\mathcal{Z}(M))\leq F(\mathcal{Z}(N))$.
\end{lemma}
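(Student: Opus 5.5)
The plan is to show that there is a positive integer $\kappa$, depending only on the set $S=\{m_0,m_1,\ldots,m_r\}$, such that every rank-$r$ semi-design $K$ with $S_K=S$ satisfies
$$F(K;S)=\kappa\cdot F(\mathcal{Z}(K)).$$
Applying this to $K=M$ and $K=N$ gives the equivalence at once. By part (3) of Lemma \ref{lem:msubiext}, $S$ determines $R_{\mathcal{Z}}$, so $M$ and $N$ have the same set $R_{\mathcal{Z}}=\{j_0<j_1<\cdots<j_p\}$. Here $j_0=0$, since the least cyclic flat is the closure of the empty set and has rank $0$.

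First, I will map each flag counted by $F(K;S)$ to a flag counted by $F(\mathcal{Z}(K))$. Take a flag $F_0\subsetneq\cdots\subsetneq F_r$ with $r(F_i)=i$ and $|F_i|=m_i$.
\begin{itemize}
\item For $i\in R_{\mathcal{Z}}$, part (2) of Lemma \ref{lem:msubiext} shows that a non-cyclic rank-$i$ flat has fewer than $m_i$ elements, so $F_i$ is cyclic.
\item For $i\notin R_{\mathcal{Z}}$, part (1) of that lemma shows that $F_i$ contains a cyclic flat of rank $j$, the largest element of $R_{\mathcal{Z}}$ below $i$.
\end{itemize}
Restricting the flag to the ranks in $R_{\mathcal{Z}}$ therefore gives a chain $A_{j_0}\subsetneq\cdots\subsetneq A_{j_p}$ of cyclic flats with one flat of each rank in $R_{\mathcal{Z}}$. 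This is the desired map.

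Next, I will count the fibre over a fixed chain $A_{j_0}\subsetneq\cdots\subsetneq A_{j_p}$. Consider two consecutive ranks $j=j_q$ and $j'=j_{q+1}$. The flags we need for this segment are chains $F_{j+1}\subsetneq\cdots\subsetneq F_{j'-1}$ with $A_j\subseteq F_i\subseteq A_{j'}$, $r(F_i)=i$ and $|F_i|=m_j+i-j$. Since $A_{j'}$ is a flat, these are exactly the flats of the minor $K|A_{j'}/A_j$ of rank $i-j$ and size $i-j$.

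By Lemma \ref{lem:cyclicflatsminors}, any cyclic flat of this minor other than $\emptyset$ and $A_{j'}-A_j$ would give a cyclic flat of $K$ of rank strictly between $j$ and $j'$. No such flat exists because $j$ and $j'$ are consecutive in $R_{\mathcal{Z}}$. Hence the minor is the uniform matroid $U_{j'-j,\,m_{j'}-m_j}$. In a uniform matroid, every set of size less than the rank is a flat. So the number of such flags is
$$(m_{j'}-m_j)(m_{j'}-m_j-1)\cdots(m_{j'}-m_j-(j'-j)+2),$$
obtained by adding one element at each step.

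Above the top rank $j_p$, the contraction $K/A_{j_p}$ has no cyclic flat other than $\emptyset$, so it is free of rank $r-j_p$ on $m_r-m_{j_p}$ elements. The flags in this segment are then counted by $(r-j_p)!$ if $j_p<r$ (the last flat must be all of $E(K)$), and by $1$ otherwise.

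The segments are independent of one another. So $\kappa$ is the product of these factors, and it depends only on $S$. Checking that the segments really are independent, and that the flats of the minor coincide with flats of $K$ (since $A_{j'}$ is a flat), is the main point to verify. With that established, the identity holds and the lemma follows.
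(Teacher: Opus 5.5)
Your proposal is correct and follows the paper's proof: both write $F(K;S)=\kappa\cdot F(\mathcal{Z}(K))$ using the fact that, between consecutive ranks $j<j'$ in $R_{\mathcal{Z}}$, the minor $K|A_{j'}/A_j$ is the uniform matroid $U_{j'-j,\,m_{j'}-m_j}$ (by Lemma \ref{lem:cyclicflatsminors}). Each such segment then contributes a falling factorial that depends only on $S$. Your separate treatment of the segment above the largest cyclic flat when $K$ has coloops, contributing $(r-j_p)!$, covers a case the paper leaves implicit; it is just the same falling-factorial count for the free matroid $U_{r-j_p,\,r-j_p}$.
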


\begin{proof}
  Fix consecutive ranks $j<i$ in $R_{\mathcal{Z}(M)}$ and
  $F_j, F_i\in\mathcal{Z}(M)$ for which $F_j\subset F_i$ where
  $r(F_j)=j$ and $r(F_i)=i$.  The flags
  $F_j\subset F_{j+1}\subset\cdots\subset F_i$ of $i-j+1$ distinct
  flats in $M$ correspond bijectively to the flags of $i-j+1$ distinct
  flats in the minor $M|F_i/F_j$.  Since $i$ and $j$ are consecutive
  ranks in $R_{\mathcal{Z}(M)}$, by Lemma \ref{lem:cyclicflatsminors}
  the minor $M|F_i/F_j$ is the uniform matroid $U_{i-j,m_i-m_j}$.  Now
  $F(U_{r,n};[r-1]\cup\{0,n\})$ is the falling factorial
  $(n)_{r-1} = n!/(n-r+1)!$.  Thus,
  $F(M;S_M)=d\cdot F(\mathcal{Z}(M))$ where $d$ is the product of the
  relevant falling factorials, which depend only on the numbers $i$
  and $m_i$ for all $i\in R_{\mathcal{Z}(M)}$.  Since $S_M=S_N$ and
  $R_{\mathcal{Z}(M)}=R_{\mathcal{Z}(N)}$, we get
  $F(N;S_N)=d\cdot F(\mathcal{Z}(N))$ for the same $d$, so the result
  follows.
\end{proof}

\begin{cor}\label{cor:dualmax}
  Let $M$ be a semi-design. Let $S_M=\{m_0,m_1,\ldots,m_r\}$.  Let
  $\mathcal{M}$ be the set of rank-$r$ semi-designs $N$ on $m_r$
  elements for which, if $A\in\mathcal{Z}(N)$, then
  $r_N(A)\in R_{\mathcal{Z}(M)}$ and $|A|=m_{r_N(A)}$.  Let
  $\mathcal{M}^*$ be defined analogously for the dual semi-design
  $M^*$.  The following two statements are equivalent.
  \begin{itemize}
  \item[] Among all matroids $N\in \mathcal{M}$, the matroid $M$
    maximizes $F(N;S_M)$.
  \item[] Among all matroids $N \in \mathcal{M}^*$, the matroid $M^*$
    maximizes $F(N;S_{M^*})$.
  \end{itemize}
\end{cor}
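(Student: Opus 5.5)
The plan is to reduce both statements to statements about lattices of cyclic flats and then use duality of those lattices. First I will check that $N\mapsto N^*$ is a bijection from $\mathcal{M}$ onto $\mathcal{M}^*$. By the lemma that duals of semi-designs are semi-designs, $N^*$ is a semi-design of rank $m_r-r$ on $m_r$ elements. By Lemma \ref{lem:cfdual}, the cyclic flats of $N^*$ are the complements $E(N)-A$ with $A\in\mathcal{Z}(N)$. Using $r_{N^*}(E-A)=|E-A|-r+r_N(A)$, a cyclic flat $A$ of $N$ with $r_N(A)=i\in R_{\mathcal{Z}(M)}$ and $|A|=m_i$ corresponds to a cyclic flat of $N^*$ of rank $m_r-m_i-r+i$ and size $m_r-m_i$.

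As computed in the proof that $M^*$ is a semi-design, these are exactly the ranks in $R_{\mathcal{Z}(M^*)}$ and the corresponding cyclic-flat sizes of $M^*$. The map $i\mapsto m_r-m_i-r+i$ is injective and order reversing by (SD2). So $N^*\in\mathcal{M}^*$, and by symmetry ($N^{**}=N$) the map is a bijection.

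Next I will apply Lemma \ref{lem:pushtocyclicflats}. For every $N\in\mathcal{M}$, I will argue that $S_N=S_M$. The parameters $m_k$ of $N$ at ranks $k\in R_{\mathcal{Z}(N)}$ are the prescribed $m_k$. Lemma \ref{lem:msubiext}(1) then determines the remaining $m_k$ from $R_{\mathcal{Z}(N)}$, the values on it, and $r$.

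One subtlety needs care: $R_{\mathcal{Z}(N)}$ might be a proper subset of $R_{\mathcal{Z}(M)}$, and then $S_N$ could differ from $S_M$. I would handle this in one of two ways. The first option is to note that in that case $F(N;S_M)=0$, since no rank-$i$ flat of size $m_i$ exists for a missing $i\in R_{\mathcal{Z}(M)}$; such $N$ are irrelevant for maximization as long as $F(M;S_M)>0$, which holds trivially. The second option is to rerun the argument of Lemma \ref{lem:pushtocyclicflats} directly. The key point there is that a flag with sizes $S_M$ must pass through cyclic flats at exactly the ranks in $R_{\mathcal{Z}(M)}$, by Lemma \ref{lem:msubiext}(2). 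With either option, for $N\in\mathcal{M}$ with $R_{\mathcal{Z}(N)}=R_{\mathcal{Z}(M)}$ we get
$$F(N;S_M)=d\cdot F(\mathcal{Z}(N)),$$
where $d>0$ depends only on $S_M$. The analogous identity holds for $\mathcal{M}^*$ with a constant $d^*$ depending only on $S_{M^*}$, and otherwise the count is $0$.

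Finally, complementation $A\mapsto E-A$ is an order-reversing bijection $\mathcal{Z}(N)\to\mathcal{Z}(N^*)$ that carries the rank set of $N$ onto that of $N^*$. It therefore sends flags in $\mathcal{Z}(N)$ containing one flat of each rank in $R_{\mathcal{Z}(N)}$ bijectively to the analogous flags in $\mathcal{Z}(N^*)$, so $F(\mathcal{Z}(N))=F(\mathcal{Z}(N^*))$. Combining this with the previous step gives, for all $N\in\mathcal{M}$,
$$F(N^*;S_{M^*})=\frac{d^*}{d}\,F(N;S_M),$$
including the degenerate zero case, since a missing rank on one side corresponds to a missing rank on the other. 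Because $N\mapsto N^*$ is a bijection $\mathcal{M}\to\mathcal{M}^*$ and the two invariants differ by a positive constant factor, $M$ maximizes over $\mathcal{M}$ if and only if $M^*$ maximizes over $\mathcal{M}^*$.

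The main obstacle is the bookkeeping in the middle step: ensuring that Lemma \ref{lem:pushtocyclicflats} applies uniformly, or that the degenerate $N$ with fewer cyclic-flat ranks contribute $0$ on both sides.
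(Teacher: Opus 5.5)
Your proposal is correct and is the paper's argument made explicit: the paper's one-line proof cites $F(\mathcal{Z}(N))=F(\mathcal{Z}(N^*))$ from Lemma~\ref{lem:cfdual}, tacitly relying on Lemma~\ref{lem:pushtocyclicflats} and the bijection $N\mapsto N^*$ from $\mathcal{M}$ onto $\mathcal{M}^*$ that you verify, and your treatment of the case $R_{\mathcal{Z}(N)}\subsetneq R_{\mathcal{Z}(M)}$ fills in a point the paper leaves unstated. One aside is wrong but harmless: $F(M;S_M)>0$ need not hold (for example, take a semi-design whose only cyclic flats are $\emptyset$; disjoint sets $A$ and $B$ of ranks $2$ and $3$ and sizes $4$ and $6$; and $E$ of rank $4$ and size $20$; it has $F(M;S_M)=0$), but your final identity $F(N^*;S_{M^*})=\frac{d^*}{d}F(N;S_M)$, valid for all $N\in\mathcal{M}$ including the degenerate ones, does not use it.
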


\begin{proof}
  This follows immediately since
  $F(\mathcal{Z}(N)) =F(\mathcal{Z}(N^*))$ by Lemma \ref{lem:cfdual}.
\end{proof}

Note that we get the sets $\mathcal{M}$ and $\mathcal{M}^*$ above by
optimizing valuative invariants: for $\mathcal{M}$, for each rank $i$,
if $i\not\in R_{\mathcal{Z}(M)}$, then minimize the number of rank-$i$
cyclic flats, and if $i\in R_{\mathcal{Z}(M)}$, then minimize the
number of rank-$i$ cyclic flats of sizes other than $m_i$.

We turn to the main result of this section.  The nested matroid
$U_{1,2}\frp U_{1,2}$, which is not extremal, shows that, in the
absence of assumptions beyond those made below, the inequalities in
conditions (i)--(i$''$) are optimal.
  
\begin{thm}\label{thm:frpbeyondPMD}
  Let $M_1,M_2,\ldots,M_n$, where $n\geq 2$, be semi-designs that have
  neither loops nor coloops.  Assume that at least one of the
  following three conditions holds:
  \begin{itemize}
  \item[(i)] $r(M_i)\leq g(M_{i+1})-2$ for each $i\in[n-1]$,
  \item[(i$'$)] $\eta(M_{i+1})\leq g(M^*_i)-2$ for each $i\in[n-1]$,
  \item[(i$''$)] $n\geq 3$ and there is a $k\in[n-2]$ for which, if
    $i\in[k]$, then $r(M_i)\leq g(M_{i+1})-2$, while if
    $i\in[n-1]-[k]$, then $\eta(M_{i+1})\leq g(M^*_i)-2$.
  \end{itemize}
  Assume also that
  \begin{itemize}
  \item[(ii)] for each $i\in[n]$, among all semi-designs $K$ for which
    $S_K=S_{M_i}$, the valuative invariant $F(K;S_{M_i})$ is maximal
    if and only if $\mathcal{G}(K)=\mathcal{G}(M_i)$.
  \end{itemize}
  Then the iterated free product $M_1\frp M_2\frp\cdots \frp M_n$ is
  extremal.
\end{thm}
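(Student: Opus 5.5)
We apply Theorem \ref{thm:vertbyinv} to $P=M_1\frp M_2\frp\cdots\frp M_n$. Write $r_i=r(M_i)$, $E_i=E(M_i)$, and $T_i=E_1\cup\cdots\cup E_i$. By Lemma \ref{lem:s1s1frp}, $P$ is a semi-design. Its cyclic flats are the sets $T_{i-1}\cup A$ with $A\in\mathcal{Z}(M_i)$, and $R_{\mathcal{Z}(P)}$ and $S_P$ are determined by the $S_{M_i}$.

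The sequence of valuative invariants is as follows. First, as in the remark after Corollary \ref{cor:dualmax}, for each rank $i\notin R_{\mathcal{Z}(P)}$ we minimize the number of rank-$i$ cyclic flats, and for each $i\in R_{\mathcal{Z}(P)}$ we minimize the number of rank-$i$ cyclic flats of size other than $m_i$. Each of these counts is valuative by Lemma \ref{lem:listvalinv}(2), and each minimum is $0$, which $P$ attains. The surviving matroids $K$ are exactly those whose cyclic flats have ranks in $R_{\mathcal{Z}(P)}$ and the prescribed sizes. Second, we maximize the number of cyclic flats of rank $r_1+\cdots+r_i$ and size $|T_i|$, for each $i\in[n-1]$, in some order. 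Third, we maximize $F(K;S_P)$.

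The key step, and the expected main obstacle, is to show that the second stage forces each surviving $K$ to have a \emph{unique} cyclic flat $X_i$ of rank $r_1+\cdots+r_i$ and size $|T_i|$, and that every cyclic flat of $K$ is comparable to $X_i$. Theorem \ref{thm:pinchpoint} then gives $K=(K|X_1)\frp(K|X_2/X_1)\frp\cdots$, and Lemma \ref{lem:sdminor} shows that each factor is a semi-design with size parameters $S_{M_i}$.

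To prove comparability under condition (i), take a cyclic flat $A$ of $K$ incomparable to $X=X_i$. Write $\rho=r_1+\cdots+r_i$ for the rank of $X$. Use nullity supermodularity together with (SD2), which says that larger rank means larger nullity, applied to the cyclic flats $X\join A$ and $X\meet A$. The condition $r(M_i)\le g(M_{i+1})-2$ should guarantee that the cyclic flats just above rank $\rho$ have size gaps too large to be compatible with $A$. Concretely, the first cyclic rank above $\rho$ has nullity at least $\eta(T_i)+2$ beyond what a cyclic flat extending $X\meet A$ can contribute, so (Z3) fails. The dual condition (i$'$) is handled by passing to $K^*$, using Lemma \ref{lem:cfdual} and $(M\frp N)^*=N^*\frp M^*$. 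Condition (i$''$) is handled by splitting at $k$ and applying (i) below the split and (i$'$) above it.

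I expect this inequality bookkeeping, and in particular choosing the right maximization so that uniqueness of $X_i$ actually follows, to be the delicate part. If maximizing the count of such flats is not strong enough, I would instead maximize the count of chains of cyclic flats through the prescribed ranks and sizes, in the manner of $F(\mathcal{Z}(K))$.

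Once $K=K_1\frp\cdots\frp K_n$ with $S_{K_i}=S_{M_i}$, Lemma \ref{lem:pushtocyclicflats} and the product structure give
$$F(\mathcal{Z}(K))=\prod_i F(\mathcal{Z}(K_i)).$$
Hence $F(K;S_P)$ is a fixed constant times this product, and it is maximized precisely when each $F(K_i;S_{M_i})$ is maximal. By hypothesis (ii), this happens exactly when $\mathcal{G}(K_i)=\mathcal{G}(M_i)$ for every $i$. Lemma \ref{lem:Gfrpr} then gives $\mathcal{G}(K)=\mathcal{G}(P)$, so $\boldsymbol{v}_K=\boldsymbol{v}_P$ for all survivors, and Theorem \ref{thm:vertbyinv} shows that $P$ is extremal.
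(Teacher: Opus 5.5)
\textbf{There is a genuine gap, and it is in your key step.} Stages 1 and 2 do not force every cyclic flat of $K$ to be comparable to the $X_i$, so Theorem \ref{thm:pinchpoint} cannot be invoked at that point.

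Here is a concrete failure. Take $M_1=U_{1,2}$ and $M_2=F_7$. Condition (i) holds, since $1\le 3-2$, and (ii) holds by Lemma \ref{lem:PMD}. Then $P$ has rank $4$ on $9$ elements, and its cyclic flats have (rank, size) in $\{(0,0),(1,2),(3,5),(4,9)\}$. On $E=X_1\cup Y$ with $|X_1|=2$ and $|Y|=7$, choose $A\subseteq Y$ with $|A|=5$. By Theorem \ref{thm:axioms}, $\{\emptyset,X_1,A,E\}$ with ranks $0,1,3,4$ defines a matroid $K$; (Z3) for $X_1,A$ reads $4+0+0\le 1+3$. This $K$ survives stage 1. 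It has exactly one rank-$1$ size-$2$ cyclic flat, which is the maximum, since two parallel classes would span a forbidden rank-$2$ cyclic flat. Yet $A$ is incomparable to $X_1$. Since $K$ is a genuine matroid, no nullity or (Z3) bookkeeping can rule this out.

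Stage 3 happens to kill this $K$, because $F(K;S_P)=0$. But you give no argument that maximizers of $F(K;S_P)$ have no stray cyclic flats. Your fallback of counting chains does not help either: cyclic flats lying off every counted chain do not change the count.

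The missing device is the paper's criterion (e). After maximizing $F(K;S_P)$, also minimize the total number of cyclic flats, which is a valuative invariant. Every counted flag passes through a unique flag $F_{s_0}\subset\cdots\subset F_{s_n}$. The cyclic flats lying between consecutive members of that flag form a sublattice $\mathcal{Z}$ of $\mathcal{Z}(K)$. Corollary \ref{cor:generalrelax} turns $(\mathcal{Z},r)$ into a matroid $K'$ that passes all earlier criteria, and Lemma \ref{lem:pushtocyclicflats} gives $F(K';S_P)=F(K;S_P)$. Minimality then forces $K=K'$, which is exactly the comparability you need.

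\textbf{Your stage 2 also has the wrong logical shape.} Maximizing the number of cyclic flats of rank $s_i=r_1+\cdots+r_i$ and size $|T_i|$ can force existence, never uniqueness. Depending on the order, it can even discard $P$. For example, $P=U_{1,2}\frp U_{2,3}\frp U_{3,5}$ satisfies (i) and (ii). The matroid $U_{3,5}\oplus U_{3,5}$ survives stage 1 and has two cyclic flats of rank $3$ and size $5$, while $P$ has only one. If that count is maximized first, $P$ is eliminated, and Theorem \ref{thm:vertbyinv} then says nothing about $P$.

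What is needed is an upper bound, which the paper supplies. Under (i), submodularity of rank shows that a rank-$s_{i-1}$ cyclic flat lies in at most one rank-$s_i$ cyclic flat. Under (i$'$), supermodularity of nullity together with inequality (\ref{eq:nullityeq}) shows that a rank-$s_{i+1}$ cyclic flat contains at most one rank-$s_i$ cyclic flat. These give uniqueness of the flag once $F(K;S_P)>0$, not global uniqueness of each $X_i$, and flag uniqueness is exactly what the criterion-(e) argument uses.

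Your final paragraph (the product formula for $F(\mathcal{Z}(K))$, hypothesis (ii), and Lemma \ref{lem:Gfrpr}) agrees with the paper once the free-product decomposition has actually been established.
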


\begin{proof}
  Let $N_i$ be $M_1\frp M_2\frp\cdots \frp M_i$ for each $i\in[n]$,
  let $N_0$ be the empty matroid, and let $N$ be $N_n$.  Each $N_i$ is
  a semi-design by Lemma \ref{lem:s1s1frp}.  Let the elements of $S_N$
  be, in increasing order, $n_0,n_1,\ldots,n_{r(N)}$.  If
  $j=r(N_{h-1})+s$ and $s\leq r(M_h)$, then Lemma \ref{lem:msubiext}
  gives $n_j=|E(N_{h-1})| +m_{h,s}$, where $m_{h,s}$ is the maximum
  size of the rank-$s$ flats of $M_h$.
  
  By minimizing the appropriate valuative invariants, we may select,
  among rank-$r(N)$ matroids on $|E(N)|$ elements, those that
  \begin{itemize}
  \item[(a)] for each $i\in[0,r(N)]$, have no rank-$i$ flats of size
    greater than $n_i$,
  \item[(b)] for each $i\in R_{\mathcal{Z}(N)}$, have no rank-$i$
    cyclic flats of size less than $n_i$,
  \item[(c)] for each $i\in[r(N)]- R_{\mathcal{Z}(N)}$, have no
    rank-$i$ cyclic flats,
  \end{itemize}
  and from those, we may then
  \begin{itemize}
  \item[(d)] select the matroids $K$ that maximize the valuative
    invariant $F(K;S_N)$, and
  \item[(e)] from those, select the matroids that minimize the total
    number of cyclic flats.
  \end{itemize}
  We claim that any matroid $K$ that is selected by items (a)--(e) is
  $M'_1\frp M'_2\frp\cdots \frp M'_n$ for some matroids
  $M'_1,M'_2,\ldots,M'_n$ for which
  $\mathcal{G}(M'_i)=\mathcal{G}(M_i)$ for each $i\in[n]$.  Since
  $\mathcal{G}( M_1\frp M_2\frp\cdots \frp M_n)= \mathcal{G}(M'_1\frp
  M'_2\frp\cdots \frp M'_n)$ by Lemma \ref{lem:Gfrpr}, this proves the
  theorem.  By selection criteria (a)--(d), the matroid $K$ is a
  semi-design and $S_N = S_K$, so
  $R_{\mathcal{Z}(N)}=R_{\mathcal{Z}(K)}$ by Lemma \ref{lem:msubiext}.

  For $i\in[0,n]$, let $s_i=r(N_i)$.  We claim that the semi-design
  $K$ has a unique flag of flats
  $F_{s_0}\subset F_{s_1}\subset F_{s_2}\subset\cdots\subset F_{s_n}$
  for which $r(F_{s_i})=s_i$ and $|F_{s_i}|=n_{s_i}$ for each
  $i\in[0,n]$.  By Lemma \ref{lem:msubiext}, all flats in such a flag
  are cyclic.  There is at least one such flag since $N$ optimizes the
  valuative invariants in items (a)--(c) and $F(N;S_N)>0$.  We first
  prove statements (1) and (2) below and then deduce uniqueness from
  them.  These statements are related by duality, but we prove both to
  make it easier to see that the subscripts (which are crucial to our
  application) line up correctly.

  \begin{itemize}
  \item[(1)] \emph{If $r(M_i)\leq g(M_{i+1})-2$, then any
      rank-$s_{i-1}$ cyclic flat of $K$ is a subset of at most one
      rank-$s_i$ cyclic flat of $K$.}
  \item[(2)] \emph{If $\eta(M_{i+1})\leq g(M^*_i)-2$, then any
      rank-$s_{i+1}$ cyclic flat of $K$ contains at most one
      rank-$s_i$ cyclic flat of $K$.}
  \end{itemize}

  Assume, contrary to statement (1), that $r(M_i)\leq g(M_{i+1})-2$,
  that $X$ is a rank-$s_{i-1}$ cyclic flat of $K$, and that $F$ and
  $F'$ are different rank-$s_i$ cyclic flats for which
  $X\subseteq F\cap F'$.  Now $\cl_K(F\cup F')$ is a cyclic flat and
  $s_i<r_K(F\cup F')$.  The submodular inequality gives
  \begin{align*}
    r_K(F\cup F')
    &\leq  r_K(F)+r_K(F')-r_K(F\cap F')\\
    &\leq  r_K(F)+r_K(F')-r_K(X)\\
    &= s_i +r(M_i)\\
    &\leq s_i +g(M_{i+1})-2.
  \end{align*}
  The least rank of a cyclic flat of $N$ greater than $s_i$ is
  $s_i+g(M_{i+1})-1$.  Since $S_K=S_N$ and $K$ is a semi-design, it
  follows that the inequality $s_i<r_K(Y)<s_i+g(M_{i+1})-1$ fails for
  all $Y\in\mathcal{Z}(K)$.  This contradiction proves statement (1).

  Before proving statement (2), we derive a key inequality.  By Lemma
  \ref{lem:msubiext}, the maximal-sized hyperplanes of $N_i$ are the
  sets $E(N_{i-1})\cup H$ where $H$ is a maximal-sized hyperplane of
  $M_i$; also, $\eta(N_i) =\eta_N(E(N_{i-1})\cup H)+g(M^*_i)-1$ for
  any such $H$.  It follows that, for any rank-$s_i$ cyclic flat $F$
  of $K$ and any flat $Y$ of $K$ for which $r_K(Y)<s_i$, we have
  \begin{equation}\label{eq:nullityeq}
    \eta_K(F) \geq \eta_K(Y)+g(M^*_i)-1.
  \end{equation}

  To prove statement (2), assume that $\eta(M_{i+1})\leq g(M^*_i)-2$,
  that $X$ is a rank-$s_{i+1}$ cyclic flat of $K$, and, contrary to
  statement (2), that $F$ and $F'$ are different rank-$s_i$ cyclic
  flats for which $ F\cup F'\subseteq X$.  Note that
  $\eta_K(F')+ \eta(M_{i+1})=\eta_K(X)$.  Since nullity is
  supermodular, we have
  \begin{align*}
    \eta_K(F\cap F')
    &\geq  \eta_K(F)+\eta_K(F')-\eta_K(F\cup F')\\
    &\geq  \eta_K(F)+\eta_K(F')-\eta_K(X)\\
    &= \eta(F)- \eta(M_{i+1}).
  \end{align*}
  Thus, $\eta_K(F) \leq \eta_K(F\cap F')+ \eta(M_{i+1})$, and so
  $\eta_K(F) \leq \eta_K(F\cap F')+ g(M^*_i)-2$ by the inequality that
  we assumed.  This contradiction to inequality (\ref{eq:nullityeq})
  proves statement (2).

  For the uniqueness of the flag
  $F_{s_0}\subset F_{s_1}\subset \cdots\subset F_{s_n}$ identified
  above under condition (i), since $F_{s_0}=\emptyset$ is unique,
  condition (i) in the case of $i=1$ gives the uniqueness of
  $F_{s_1}$, from which the uniqueness of $F_{s_2}$ follows using
  $i=2$, and so on. Likewise under condition (i$'$), since
  $F_{s_n}=E(N)$ is unique, condition (i$'$) in the case of $i=n-1$
  gives the uniqueness of $F_{s_{n-1}}$, from which we get the
  uniqueness of $F_{s_{n-2}}$ using $i=n-2$, and so on.  Finally,
  under condition (i$''$), the first argument gives the uniqueness of
  each of $F_{s_0}, F_{s_1}, \ldots, F_{s_k}$ and the second gives the
  uniqueness of each of $F_{s_{k+1}}, F_{s_{k+2}}, \ldots, F_{s_n}$,
  so the whole flag is unique.

  We use the flats $F_{s_1}, F_{s_2}, \ldots, F_{s_n}$ to define $n$
  minors of $K$: set $M'_1=K|F_{s_1}$ and, for each $i\in[n-1]$, set
  $M'_{i+1}=K|F_{s_{i+1}}/F_{s_i}$.  Since each flag that $F(K;S_N)$
  counts includes each of the flats
  $F_{s_0}, F_{s_1},\ldots, F_{s_n}$, maximizing $F(K;S_N)$ is
  equivalent to maximizing $F(M'_i;S_{M_i})$ for all $i\in [n]$.  By
  assumption (ii), for each $i\in[n]$, the invariant $F(M'_i;S_{M_i})$
  is maximized precisely when $\mathcal{G}(M'_i)=\mathcal{G}(M_i)$ for
  each $i\in [n]$.

  Let
  $\mathcal{Z} = \{X\in\mathcal{Z}(K)\,:\, F_{s_{i-1}}\subseteq X
  \subseteq F_{s_i} \text{ for some } i\in[n]\}$.  Note that
  $\mathcal{Z}$ is a sublattice of $\mathcal{Z}(K)$ and that each
  cyclic flat that is in a flag that $F(K;S_N)$ counts is in
  $\mathcal{Z}$.  By Corollary \ref{cor:generalrelax}, the pair
  $(\mathcal{Z},r)$, where $r(X)=r_N(X)$ for all $X\in\mathcal{Z}$,
  yields a matroid $K'$ on $E(K)$.  Since $F_{s_n}=E(K)$, we have
  $E(K)\in\mathcal{Z}$, so $r(K')=r(K)=r(N)$.  Clearly $K'$ is a
  semi-design, and the chains that $F(K;S_N)$ counts show that
  $S_{K'}=S_N$.  Lemma \ref{lem:pushtocyclicflats} gives
  $F(K';S_N)= F(K;S_N)$, so by selection criterion (e), we have
  $K=K'$.  Thus, $\mathcal{Z}(K)=\mathcal{Z}$, and so Theorem
  \ref{thm:pinchpoint} gives
  $$K=(K|F_{s_1})\frp (K|F_{s_2}/F_{s_1}) \frp\cdots \frp
  (K|F_{s_n}/F_{s_{n-1}}),$$ that is,
  $K=M'_1\frp M'_2\frp\cdots \frp M'_n$, as needed.
\end{proof}

We illustrate this theorem with two corollaries about free products of
perfect matroid designs.  Note that hypothesis (ii) in Theorem
\ref{thm:frpbeyondPMD} holds for all perfect matroid designs by Lemma
\ref{lem:PMD}, so only one of the conditions (i)--(i$''$) needs to be
imposed.

\begin{cor}\label{cor:pps}
  Let each of $M_1,M_2,\ldots,M_n$ be a projective or affine plane and
  let $q_i$ be the order of $M_i$. If $q_1>q_2>\cdots> q_n$, then
  $M_1\frp M_2\frp\cdots \frp M_n$ is extremal.
\end{cor}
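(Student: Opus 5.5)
We will deduce this from Theorem \ref{thm:frpbeyondPMD}. Projective and affine planes are perfect matroid designs, hence semi-designs; they are simple, so they have no loops, and they have no coloops since every element lies on a line with at least three points. Hypothesis (ii) of Theorem \ref{thm:frpbeyondPMD} holds by Lemma \ref{lem:PMD}. For semi-designs that are perfect matroid designs, $S_K=S_{M_i}$ means $K$ has the same sizes of maximal flats in each rank. By Lemma \ref{lem:PMD}, the maximizers of $F(K;S_{M_i})$ over all rank-$3$ matroids on $|E(M_i)|$ elements are exactly the perfect matroid designs with size parameters $S_{M_i}$. By \cite[Example 3.4]{catdata}, these are exactly the matroids with $\mathcal{G}(K)=\mathcal{G}(M_i)$; conversely, $\mathcal{G}(K)=\mathcal{G}(M_i)$ forces the same value of the valuative invariant $F$.

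It remains to verify one of (i)--(i$''$). The order-$q$ planes have $r=3$ and $g\geq 3$, so (i), namely $3\leq g(M_{i+1})-2$, would need girth at least $5$, which fails. We therefore use (i$'$): $\eta(M_{i+1})\leq g(M_i^*)-2$. Here $\eta(M_{i+1})=|E(M_{i+1})|-3$, which is $q_{i+1}^2+q_{i+1}-2$ (projective) or $q_{i+1}^2-3$ (affine). The cogirth of a plane is the size of the complement of a largest line: $g(M_i^*)=q_i^2$ for a projective plane and $q_i^2-q_i$ for an affine plane. So we need $|E(M_{i+1})|-3\leq g(M_i^*)-2$, i.e.\ $|E(M_{i+1})|\leq g(M_i^*)+1$.

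The worst case has $M_{i+1}$ projective of order $q_{i+1}\leq q_i-1$ and $M_i$ affine of order $q_i$. Then we need $q_{i+1}^2+q_{i+1}+1\leq q_i^2-q_i+1$. Since $x^2+x$ is increasing, $q_{i+1}^2+q_{i+1}\leq (q_i-1)^2+(q_i-1)=q_i^2-q_i$, which holds. All other combinations follow from this one, because a projective $M_i$ has larger cogirth and an affine $M_{i+1}$ has a smaller ground set. Hence (i$'$) holds, and Theorem \ref{thm:frpbeyondPMD} gives that $M_1\frp\cdots\frp M_n$ is extremal.

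The main point to check carefully is the cogirth computation: a cocircuit is the complement of a hyperplane, so the smallest cocircuit is the complement of a largest line. The inequality is tight in the affine/projective mixed case, so the bounds must be exactly right.
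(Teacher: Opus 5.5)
Your proposal is correct and is essentially the paper's proof: you get hypothesis (ii) from Lemma \ref{lem:PMD}, then verify (i$'$) using the worst case of $M_i$ affine and $M_{i+1}$ projective, which reduces to $q_{i+1}^2+q_{i+1}\leq (q_i-1)^2+(q_i-1)=q_i^2-q_i$. One small slip: the affine plane of order $2$ is $U_{3,4}$, whose lines have only two points, so your reason for ``no coloops'' fails for it, although the conclusion still holds because its ground set is a circuit.
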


\begin{proof}
  As noted above, it suffices to show that condition (i$'$) holds.
  The nullity $\eta(M_{i+1})$ is greatest if $M_{i+1}$ is a projective
  plane, in which case $\eta(M_{i+1})=q^2_{i+1}+q_{i+1}-2$.  The
  cogirth $g(M^*_i)$ is least if $M_i$ is an affine plane, in which
  case $g(M^*_i)=q^2_i-q_i $.  Thus, it suffices to prove that
  $q^2_{i+1}+q_{i+1}-2\leq q^2_i-q_i-2$.  This holds since
  $$q^2_{i+1}+q_{i+1}\leq (q_i-1)^2+q_i-1 = q^2_i-q_i.\qedhere$$
\end{proof}

The lattice path $N^{a_1}E^{b_1} N^{a_2}E^{b_2}\ldots N^{a_n}E^{b_n}$
represents the nested matroid in the next two results.

\begin{cor}\label{cor:nested}
  Let $M$ be the nested matroid
  $U_{a_1,a_1+b_1}\frp U_{a_2,a_2+b_2}\frp\cdots\frp U_{a_n,a_n+b_n}$
  where $a_i,b_i\in\mathbb{N}$.  The matroid $M$ is extremal if any of
  the following conditions holds:
   \begin{itemize}
   \item[(i)] $a_i<a_{i+1}$ for each $i\in[n-1]$,
   \item[(i$'$)] $b_{i+1} <b_i$ for each $i\in[n-1]$,
   \item[(i$''$)] $n\geq 3$ and there is some $k\in[n-2]$ for which, if
     $i\in[k]$, then $a_i<a_{i+1}$, while if
     $i\in[n-1]-[k]$, then $b_{i+1} <b_i$.
  \end{itemize}
\end{cor}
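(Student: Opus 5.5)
Apply Theorem \ref{thm:frpbeyondPMD} with $M_i=U_{a_i,a_i+b_i}$. Since $a_i,b_i\in\mathbb{N}$, each $M_i$ has positive rank and positive nullity. Uniform matroids have neither loops nor coloops except in the degenerate cases $a_i=0$ or $b_i=0$, and these are excluded here, so the standing hypotheses of that theorem hold. Each uniform matroid is a perfect matroid design, hence a semi-design. By Lemma \ref{lem:PMD}, hypothesis (ii) of Theorem \ref{thm:frpbeyondPMD} holds for each $M_i$, as the paper notes just before Corollary \ref{cor:pps}.

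We still need to check that (i), (i$'$) and (i$''$) of the corollary translate into conditions (i), (i$'$) and (i$''$) of the theorem. This rests on the girth and cogirth of uniform matroids. For $U_{a,a+b}$ with $a,b\geq 1$, the circuits are the $(a+1)$-subsets, so $g(U_{a,a+b})=a+1$. The dual is $U_{b,a+b}$, so $g(U^*_{a,a+b})=b+1$. The rank is $a$ and the nullity is $\eta(U_{a,a+b})=b$.

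Condition (i) of the theorem, $r(M_i)\leq g(M_{i+1})-2$, becomes $a_i\leq a_{i+1}-1$, that is, $a_i<a_{i+1}$. Condition (i$'$), $\eta(M_{i+1})\leq g(M^*_i)-2$, becomes $b_{i+1}\leq b_i-1$, that is, $b_{i+1}<b_i$. Condition (i$''$) translates index by index in the same way. So each hypothesis of the corollary is exactly the matching hypothesis of Theorem \ref{thm:frpbeyondPMD}, and that theorem gives extremality of $M$.

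The case $n=1$ needs separate mention, since Theorem \ref{thm:frpbeyondPMD} assumes $n\geq 2$. Then $M$ is a uniform matroid, which is extremal by Lemma \ref{lem:PMD}.

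The main point to verify is hypothesis (ii): Lemma \ref{lem:PMD} is stated for perfect matroid designs, whereas (ii) is a condition among semi-designs $K$ with $S_K=S_{M_i}$. For a uniform matroid, $S_{M_i}=\{0,1,\ldots,a_i-1,a_i+b_i\}$. Any semi-design $K$ with this parameter set maximizing $F(K;S)$ is, by Lemma \ref{lem:PMD}, a perfect matroid design with these size parameters. Such a $K$ must be $U_{a_i,a_i+b_i}$ itself, so its $\mathcal{G}$-invariant agrees with that of $M_i$. Conversely, any $K$ with $\mathcal{G}(K)=\mathcal{G}(M_i)$ has the same valuative invariants, in particular the same value of $F$, so it attains the maximum. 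Hence (ii) holds.
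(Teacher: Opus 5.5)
Your proposal is correct and matches the paper's proof: apply Theorem \ref{thm:frpbeyondPMD} to the uniform factors, get hypothesis (ii) from Lemma \ref{lem:PMD}, and translate conditions (i)--(i$''$) using that $U_{a_i,a_i+b_i}$ has rank $a_i$, girth $a_i+1$, nullity $b_i$, and cogirth $b_i+1$. Your added remarks spell out details the paper leaves implicit: the trivial case $n=1$, and the fact that $U_{a_i,a_i+b_i}$ itself lies in the class of semi-designs with $S_K=S_{M_i}$, so the maximizers in that class are the global maximizers described in Lemma \ref{lem:PMD}.
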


\begin{proof}
  This follows immediately since $U_{a_i,a_i+b_i}$ has rank $a_i$,
  girth $a_i+1$, nullity $b_i$, and cogirth $b_i+1$.
\end{proof}

For example, to see that the nested matroid
$U_{1,2}\frp U_{2,4}\frp U_{1,2}$ in Figure \ref{fig:frpex} is
extremal, use $k=1$ in condition (i$''$).

While it does not provide a converse of Corollary \ref{cor:nested},
the next result shows that if neither inequality in Corollary
\ref{cor:nested} holds for a particular $i$, then the nested matroid
is not extremal.

\begin{thm}\label{thm:notext}
  Let $M$ be the nested matroid
  $U_{a_1,a_1+b_1}\frp U_{a_2,a_2+b_2}\frp\cdots\frp U_{a_n,a_n+b_n}$.
  If $a_i\geq a_{i+1}$ and $ b_{i+1}\geq b_i$ for some $i\in[n-1]$,
  then $M$ is not extremal.
\end{thm}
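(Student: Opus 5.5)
The plan is to reduce to two consecutive factors and then write the point of that two-factor nested matroid as the midpoint of two other points of the polytope.

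\emph{Reduction.} By associativity, $M=A\frp(U_{a_i,a_i+b_i}\frp U_{a_{i+1},a_{i+1}+b_{i+1}})\frp B$, where $A$ and $B$ are the iterated free products of the remaining factors (possibly empty). The factors have no coloops and no loops in the required places, since $a_j,b_j\ge 1$. So the multi-factor form of Theorem \ref{thm:freeextremalcomponents} (via the routine induction mentioned before it) says that if $M$ is extremal then so is the middle factor. Hence it suffices to show that
$$P=U_{a,a+b}\frp U_{c,c+d}, \qquad a\ge c,\ d\ge b,$$
is not extremal. The example $U_{1,2}\frp U_{1,2}$ is the model case: there the matroid with two disjoint parallel classes averages with $U_{2,4}$ to give the nested matroid.

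\emph{Construction of $K$.} On a set $E$ of size $a+b+c+d$, I would build a matroid $K$ whose cyclic flats are $\emptyset$, $X$, $X'$ and $E$. Here $|X|=|X'|=a+b$, $r(X)=r(X')=a$ and $r(E)=a+c$. I would choose $|X\cap X'|=t:=\max(0,a+b-c-d)$, which makes $|X\cup X'|\le |E|$. I would then verify (Z0)--(Z3) of Theorem \ref{thm:axioms}.
\begin{itemize}
\item (Z0) and (Z1) are clear.
\item (Z2) needs $0<a<a+b$ and $0<c<|E-X|=c+d$, which hold.
\item For (Z3), the meet $X\meet X'$ is $\emptyset$, since $X\cap X'$ is independent (it has size $t\le a-c\le a$). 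The condition is then $(a+c)+0+t\le 2a$, that is, $t\le a-c$. This holds because $a\ge c$ and $a+b-c-d\le a-c$, the latter being exactly $b\le d$.
\end{itemize}
Thus both hypotheses are used precisely here. This verification is the main step, and I expect it to be the only real obstacle.

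\emph{Computing the coefficients.} In $\mathcal{C}(\mathcal{Z}(K))$ the chains are $\{\emptyset,X,E\}$, $\{\emptyset,X',E\}$ and $\{\emptyset,E\}$, with M\"obius values to $\hat 1$ equal to $-1$, $-1$ and $1$. The nested matroids for the first two chains are isomorphic to $P$, and the third gives $U_{a+c,a+b+c+d}$. By Hampe's formula,
$$\mathbb{I}(\mathcal{P}(K))=2\,\mathbb{I}(\mathcal{P}(P))-\mathbb{I}(\mathcal{P}(U_{a+c,a+b+c+d})),$$
so
$$\boldsymbol{v}_P=\tfrac12\bigl(\boldsymbol{v}_K+\boldsymbol{v}_{U_{a+c,a+b+c+d}}\bigr).$$

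\emph{Conclusion.} By uniqueness of the coefficient vectors, the three vectors $\boldsymbol{v}_K$, $\boldsymbol{v}_U$ and $\boldsymbol{v}_P$ are pairwise distinct: $\boldsymbol{v}_P$ and $\boldsymbol{v}_U$ are distinct unit vectors, and $\boldsymbol{v}_K$ has a negative entry. Hence $\boldsymbol{v}_P$ lies in the interior of a segment in $\Omega_{a+c,a+b+c+d}$ and is not a vertex. So $P$ is not extremal, and by the reduction neither is $M$.
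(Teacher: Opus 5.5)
Your proof is correct. Its core construction is the paper's: create a second cyclic flat with the same rank and size as the middle one, then write the nested matroid as the midpoint of the resulting matroid and of the matroid obtained by deleting that cyclic flat.

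The difference is in how the argument is organized. The paper works directly in $M$. With $X\subset Y\subset Z$ the cyclic flats of $M$ of ranks $\alpha_{i-1},\alpha_i,\alpha_{i+1}$, it adds to $\mathcal{Z}(M)$ a set $Y'$ consisting of $X$, $a_i-a_{i+1}$ elements of $Y-X$, and $a_{i+1}+b_i$ elements of $Z-Y$; this gives $M'$. Deleting $Y$ gives $M''$. It then checks (Z3) only for the pair $Y,Y'$.

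You first localize to $P=U_{a,a+b}\frp U_{c,c+d}$ by applying Theorem \ref{thm:freeextremalcomponents} twice, and your loop/coloop checks for those applications are right. The lattice of cyclic flats is then just $\{\emptyset,X,X',E\}$. If you push your $K$ and $U_{a+c,a+b+c+d}$ back through $A\frp(\cdot)\frp B$ using Theorem \ref{thm:freeproductindicator}, you recover exactly the paper's $M'$ and $M''$, with $Y'=E(A)\cup X'$ in your notation.

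The two proofs differ only in the overlap chosen. The paper uses $t=a-c$, which makes (Z3) an equality; you take the smallest admissible overlap. Any integer $t$ with $\max(0,a+b-c-d)\le t\le a-c$ works, and this range is nonempty exactly when $a\ge c$ and $d\ge b$. That nicely explains why the two hypotheses appear.

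What each route buys:
\begin{itemize}
\item Your route makes the (Z3) check and the M\"obius computation trivial. It rests, however, on Theorem \ref{thm:freeextremalcomponents}, and hence on Theorems \ref{thm:freeproductindicator} and \ref{thm:frpuniquefactorization}.
\item The paper's route needs only Theorem \ref{thm:axioms} and Hampe's formula, and it exhibits witnesses for $M$ itself.
\end{itemize}
Your explicit check that the three points are pairwise distinct fills in a step the paper leaves implicit.

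One small slip: in (Z3), $X\meet X'$ is the meet in the poset $\{\emptyset,X,X',E\}$. It equals $\emptyset$ simply because $\emptyset$ is the only common lower bound. Appealing to independence of $X\cap X'$ is unnecessary, and it is not yet meaningful before $K$ is known to exist.
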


\begin{proof}
  Set $\alpha_j=a_1+a_2+\cdots+a_j$ and $\beta_j=b_1+b_2+\cdots+b_j$
  for $j\in[n]$, and $\alpha_0=\beta_0=0$.  Fix an $i\in[n-1]$ for
  which $a_i\geq a_{i+1}$ and $ b_{i+1}\geq b_i$, and let $X$, $Y$,
  and $Z$ be the cyclic flats of $M$ of ranks $\alpha_{i-1}$,
  $\alpha_i$, and $\alpha_{i+1}$, respectively.  Thus, $|Y-X|=a_i+b_i$
  and $|Z-Y|=a_{i+1}+b_{i+1}$.  Let the set $Y'$ consist of (i) the
  elements of $X$, (ii) any $a_i-a_{i+1}$ elements of $Y-X$, and (iii)
  any $a_{i+1}+b_i$ elements of $Z-Y$.  Thus, $X\subset Y'\subset Z$
  and $|Y'-X|= a_i+b_i$, so $|Y'|=\alpha_i+\beta_i$.

  Let the matroid $M'$ on $E(M)$ be defined by the set of pairs of
  cyclic flats and ranks
  $$\{(A,r_M(A))\,:\,A\in\mathcal{Z}(M)\}\cup\{(Y',r_M(Y))\}.$$
  To see that $M'$ is indeed a matroid, only one instance of one
  property in Theorem \ref{thm:axioms} requires any argument: for
  property (Z3) for the pair $Y$ and $Y'$, we have
  \begin{align*}
    r(Y)+r(Y')
    & = 2(\alpha_{i-1}+a_i)\\
    & = \alpha_{i-1}+a_i+a_{i+1} + \alpha_{i-1}+a_i-a_{i+1}\\
    & = r(Z)+r(X)+|(Y\cap Y')-X|\\
    & = r(Y\join Y')+r(Y\meet Y')+|(Y\cap Y')-(Y\meet Y')|.
  \end{align*}
  Let the matroid $M''$ on $E(M)$ be defined by the set of pairs of
  cyclic flats and
  ranks $$\{(A,r_M(A))\,:\,A\in\mathcal{Z}(M)-\{Y\}\}.$$ For example,
  if $M$ is $U_{1,2}\frp U_{1,2}$, then $M'$ is
  $U_{1,2}\oplus U_{1,2}$ and $M''$ is $U_{2,4}$; also, if $M$ is
  $U_{2,4}\frp U_{1,3}$, then $M'$ is the parallel connection of two
  copies of $U_{2,4}$ and $M''$ is $U_{3,7}$.
  
  The greatest element $\hat{1}$ of the lattice
  $\mathcal{C}(\mathcal{Z}(M'))$ covers just two chains,
  $C=\mathcal{Z}(M)$ and $C'=(C-\{Y\})\cup\{Y'\}$.  Therefore only
  three chains in $\mathcal{C}(\mathcal{Z}(M'))$ have nonzero M\"obius
  values, namely, (i) $\mu(C,\hat{1})=-1$, (ii) $\mu(C',\hat{1})=-1$,
  and (iii) $\mu(C'',\hat{1})=1$ where
  $C''= C\cap C'=\mathcal{Z}(M'')$.  The nested matroids given by the
  first two chains are isomorphic to $M$, so
  $\mathbb{I}(\mathcal{P}(M'))= 2\,\mathbb{I}(\mathcal{P}(M))-
  \mathbb{I}(\mathcal{P}(M''))$.  Thus,
  $$\mathbb{I}(\mathcal{P}(M))=
  \frac{1}{2}\,\mathbb{I}(\mathcal{P}(M'))+
  \frac{1}{2}\,\mathbb{I}(\mathcal{P}(M'')),$$
  so $M$ is not extremal.
\end{proof}

\section{Examples}\label{sec:examples}

As noted earlier, perfect matroid designs satisfy hypothesis (ii) in
Theorem \ref{thm:frpbeyondPMD}.  In this section we treat three
infinite families of semi-designs that satisfy that hypothesis and are
neither perfect matroid designs nor obtained from them by duality or
free products.

Semi-designs are extremely common. For instance, any paving matroid in
which all dependent hyperplanes have the same size is a semi-design;
this includes all sparse paving matroids.  Mayhew, Newman, Welsh, and
Whittle \cite{asy} conjecture that, asymptotically, almost all
matroids are sparse paving; see \cite{BPP,MW,rudijorn} for progress
toward this conjecture.  However, a sparse paving matroid is extremal
if and only if it maximizes or minimizes the number of dependent
hyperplanes \cite[Theorem 5.11]{LuisAlex}.  The set of hyperplanes of
a rank-$r$ paving matroid on $n$ elements in which all hyperplanes
have $k$ elements gives a Steiner system $S(r-1,k,n)$.  Steiner
systems exist only for certain parameters $r,k,n$.  For most
parameters for which no Steiner system $S(r-1,k,n)$ exists, it is hard
to determine the maximum number of hyperplanes in rank-$r$ paving
matroids on $n$ elements in which all dependent hyperplanes have size
$k$.  Each of the three classes of extremal semi-designs that we treat
below, in contrast, has a structure that makes it fairly
straightforward to work with.

\begin{example}\label{ex:2}
  Ferroni and Fink \cite{LuisAlex} observed that uniform matroids are
  extremal.  Direct sums of uniform matroids are also extremal
  \cite[Theorem 8.1]{maxvalinv}.  If $M$ is the direct sum of $m$
  copies of $U_{r,n}$, where $1<r<n$ and $m>1$, then $M$ is a
  semi-design but not a perfect matroid design.  While truncations of
  semi-designs are semi-designs, truncations of extremal semi-designs
  need not be extremal; for instance,
  $U_{2,3}\oplus U_{2,3}\oplus U_{2,3}$ is an extremal semi-design,
  but its truncation to rank $3$ is not extremal.  In Theorem
  \ref{thm:ex2} we show that truncations of $M$ to ranks that are
  multiples of $r$ are extremal.
\end{example}

\begin{thm}\label{thm:ex2}
  Fix $r,n,m,s\in\mathbb{N}-\{1\}$ where $r<n$ and $s<m$.  Let $N$ be
  the truncation of the direct sum of $m$ copies of $U_{r,n}$ to rank
  $sr$.  Matroids isomorphic to $N$ are the only rank-$sr$ matroids
  $N'$ on $mn$ elements for which
  \begin{itemize}
  \item[(a)] any cyclic flat of $N'$ has rank $ir$ and size $in$ for
    some $i\in[0,s-1]$, or rank $sr$ and size $mn$, and
  \item[(b)] among matroid that satisfy condition \emph{(a)},
    $F(N';S_N)$ is maximized.
  \end{itemize}
  Thus, the semi-design $N$ is extremal and $\mathcal{G}$-unique.
\end{thm}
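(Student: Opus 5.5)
The plan is to realize conditions (a) and (b) as a sequence of optimizations of valuative invariants, and then show by a counting argument on the lattice of cyclic flats that the optimizers are exactly the copies of $N$. Once that is done, Theorem \ref{thm:vertbyinv} shows that $N$ is extremal. Since the final selection set is a single isomorphism class, $N$ is also $\mathcal{G}$-unique: any $N'$ with $\mathcal{G}(N')=\mathcal{G}(N)$ has the same values of all valuative invariants, so it survives the same selection.

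\textbf{Step 1: condition (a) is a valuative selection.} By Lemma \ref{lem:listvalinv}, the number of cyclic flats of a given rank and size is valuative. So among rank-$sr$ matroids on $mn$ elements we minimize the number of cyclic flats whose (rank, size) pair is not in $\{(ir,in): 0\le i<s\}\cup\{(sr,mn)\}$. The matroid $N$ has value $0$ here, so the survivors are exactly the matroids satisfying (a). Write $E=E(N')$ and call a rank-$r$, size-$n$ cyclic flat a \emph{block}.

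\textbf{Step 2: survivors are semi-designs with $S_{N'}=S_N$.} Every such $N'$ satisfies (SD1) by (a). It satisfies (SD2) because $in-ir$ increases with $i$, using $n>r$, and $mn-sr>(s-1)(n-r)$. Every survivor has at least one cyclic flat of each rank $ir$ with $0\le i<s$; otherwise $F(N';S_N)=0<F(N;S_N)$, and it is eliminated in (b). For those that remain, Lemma \ref{lem:msubiext} computes $S_{N'}=S_N$ from $R_{\mathcal{Z}}$ and the sizes. Lemma \ref{lem:pushtocyclicflats} then reduces (b) to maximizing $F(\mathcal{Z}(N'))$. This is the number of chains $\emptyset\subset A_1\subset\cdots\subset A_{s-1}\subset E$ of cyclic flats with $r(A_i)=ir$ and $|A_i|=in$.

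\textbf{Step 3: an upper bound on $F(\mathcal{Z}(N'))$.}
\begin{itemize}
\item[(i)] Distinct blocks $A$ and $B$ are disjoint. Their meet in $\mathcal{Z}(N')$ has rank less than $r$, so by (a) it is $\emptyset$. Their join is a cyclic flat of rank greater than $r$, so it has rank at least $2r$. Property (Z3) of Theorem \ref{thm:axioms} then gives $2r+|A\cap B|\le 2r$, so $A\cap B=\emptyset$. Hence there are at most $m$ blocks.
\item[(ii)] For a cyclic flat $A_i$ of rank $ir$, the blocks inside $A_i$ are disjoint $n$-subsets of an $in$-set, so there are at most $i$ of them.
\item[(iii)] By Lemma \ref{lem:cyclicflatsminors}, the contraction $N'/A_1$ again satisfies (a), with parameters $m-1$ and $s-1$; its cyclic flats are the sets $A-A_1$ for cyclic flats $A\supseteq A_1$. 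Inducting on $s$, the number of chains is at most $m(m-1)\cdots(m-s+2)$. This bound is attained by $N$.
\end{itemize}

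\textbf{Step 4: characterizing equality (the main obstacle).} Equality in the bound forces the following, in order.
\begin{itemize}
\item[(i)] There are exactly $m$ blocks $B_1,\dots,B_m$, and they partition $E$.
\item[(ii)] Each $N'|B_j$ has only $\emptyset$ and $B_j$ as cyclic flats, so it is $U_{r,n}$.
\item[(iii)] Equality must also hold in each contracted count. I plan to argue by induction, using the block bound for restrictions $N'|A_i$ (whose cyclic flats are those of $N'$ inside $A_i$), that each $A_i$ in a counted chain contains $i$ blocks. Hence each such $A_i$ is a union of $i$ blocks, by size.
\item[(iv)] Chains are therefore in bijection with ordered sequences $(B_{j_1},\dots,B_{j_{s-1}})$ for which every partial union is a cyclic flat. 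Attaining the maximum $m(m-1)\cdots(m-s+2)$ then forces, for every $i<s$, every union of $i$ blocks to be a cyclic flat of rank $ir$.
\end{itemize}
The delicate point is (iii): I must make sure equality in the total count really propagates to each contraction and restriction, rather than being compensated elsewhere. I would handle this by writing the count as a sum over $A_1$ of the counts for $N'/A_1$, each bounded by the induction hypothesis.

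\textbf{Step 5: conclusion.} We now know $\mathcal{Z}(N')$ is exactly $\{\bigcup_{j\in J}B_j : |J|<s\}\cup\{E\}$, with the same ranks as in $N$. By equation (\ref{eq:cftorank}), any bijection $E(N')\to E(N)$ matching the blocks of $N'$ with those of $N$ is an isomorphism, so $N'\cong N$.
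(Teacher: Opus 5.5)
\textbf{There is a genuine gap, in Step 4(iii) and in the first sentence of Step 5.} Your Steps 1--3 and 4(i)--(ii) are sound. Bounding the chain count by (Z3)-disjointness of blocks, plus induction through $N'/A_1$, is a legitimate alternative to the paper's property ($*$).

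Equality does propagate to every $N'/A_1$. But the induction hypothesis then only says that the sets $A_2-A_1$, for rank-$2r$ cyclic flats $A_2\supset A_1$, are the blocks of $N'/A_1$ and partition $E-A_1$. Nothing identifies them with blocks of $N'$. The block bound for $N'|A_i$ is an upper bound, whereas you need $A_i$ to contain \emph{at least} $i$ blocks.

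No counting argument can supply this. Take $m=4$ and $s=3$, and a configuration in which each block $B$ lies in exactly three rank-$2r$ ``cyclic flats'' $B\cup C$, where the three sets $C$ partition $E-B$ but none is a block. This configuration has $12=m(m-1)$ chains. Each $N'/B$ looks exactly like the extremal case for $(m-1,s-1)$, and every rank-$2r$ set contains at most two blocks. Only the matroid axioms exclude it.

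Independently, Step 5 asserts that $\mathcal{Z}(N')$ contains nothing but unions of blocks. Yet a cyclic flat lying in no counted chain is never ruled out, for example one of rank $2r$ and size $2n$ containing no block. Such a flat would destroy the isomorphism with $N$.

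\textbf{Both holes close with the tool from your Step 3(i).} Once the $m$ blocks partition $E$, take any cyclic flat $F$ with $r(F)=ir$, $0<i<s$, and any block $B\not\subseteq F$.
\begin{itemize}
\item $F\meet B$ is a cyclic flat properly contained in $B$, hence $\emptyset$.
\item $F\join B\supsetneq F$ has rank at least $(i+1)r$.
\item So (Z3) gives $(i+1)r+|F\cap B|\le ir+r$, i.e.\ $F\cap B=\emptyset$.
\end{itemize}
Hence every such $F$ is a union of exactly $i$ blocks. Your count in Step 4(iv) then forces every union of fewer than $s$ blocks to be a cyclic flat, and Step 5 follows.

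The paper makes the same structural move with submodularity: an element $e\in F$ whose block $X\not\subseteq F$ would give $ir<r(F\cup X)\le ir+r-1$, contradicting (a). It also gets that unions of blocks are cyclic directly from property ($*$)(ii), rather than from equality in the count.
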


\begin{proof}
  Any matroid $N'$ that satisfies the criteria above is a semi-design
  and the equalities $S_{N'}=S_N$ and
  $R_{\mathcal{Z}(N')}=R_{\mathcal{Z}(N)}$ hold.  Recall Lemma
  \ref{lem:pushtocyclicflats}: maximizing $F(N';S_N)$ is equivalent to
  maximizing the invariant $F(\mathcal{Z}(N'))$ that was defined
  before that lemma.  Property ($*$) below is key to the proof that
  $N'$ is isomorphic to $N$.
  
  \begin{itemize}
  \item[($*$)] \emph{For each $i\in[s-1]$, if $X$ is a rank-$(i-1)r$
      cyclic flat of $N'$, and if $F$ and $F'$ are different rank-$ir$
      cyclic flats for which $X\subseteq F\cap F'$, then} (i)
    \emph{$X=F\cap F'$ and} (ii) \emph{if, in addition, $i\ne s-1$, then
      $F\cup F'$ is a rank-$(i+1)r$ cyclic flat of $N'$.  }
  \end{itemize}
  
  To prove this, the submodular inequality gives
  $$r(F\cup F')\leq r(F)+r(F')-r(F\cap F')\leq 2ir-(i-1)r=
  (i+1)r,$$ so $ir<r(F\cup F')\leq (i+1)r$.  Now $\cl(F\cup F')$ is a
  cyclic flat and so, by criterion (a), its rank is a multiple of $r$,
  so $r(F\cup F') = (i+1)r$.  With that, the submodular inequality
  also gives
  $$r(F\cap F')\leq r(F)+r(F')-r(F\cup F') = 2ir-(i+1)r= (i-1)r=r(X),$$
  so we get conclusion (i).  Thus, $|F\cup F'| = 2in-(i-1)n=(i+1)n$,
  which, if $i\ne s-1$, is the size of a rank-$(i+1)r$ cyclic flat, so
  conclusion (ii) follows.

  Using $i=1$ in property ($*$) shows that the rank-$r$ cyclic flats
  of $N'$ are pairwise disjoint, so there are at most $m$ of them;
  likewise, using $i=2$, for any rank-$r$ cyclic flat $X$, as $F$
  ranges over the rank-$2r$ cyclic flats that contain $X$, the sets
  $F-X$ are pairwise disjoint, so there are at most $m-1$ of them,
  etc.  It follows that
  $$F(\mathcal{Z}(N'))\leq m(m-1)\cdots (m -s+2) =
  F(\mathcal{Z}(N)).$$ Thus, selection criterion (b) forces $N'$ to
  have $m$ rank-$r$ cyclic flats, and so these cyclic flats partition
  $E(N')$.  Now by conclusion (ii) in property ($*$), for each
  $i\in[s-1]$, the union of any $i$ rank-$r$ cyclic flats is a
  rank-$ir$ cyclic flat, and the flags of such cyclic flats account
  for the equality $F(\mathcal{Z}(N'))= F(\mathcal{Z}(N))$.  To show
  that $N'$ is isomorphic to $N$, it suffices to show that there are
  no other cyclic flats of $N'$.  Let $F$ be a cyclic flat of $N'$,
  say with $r(F) = rt$, where $1<t<s$.  If $F$ were not a union of
  rank-$r$ cyclic flats, then it would contain an element $e$ for
  which the unique rank-$r$ cyclic flat $X$ that contains $e$ is not a
  subset of $F$.  By the submodular inequality, we would have
  $$rt=r(F)<r(F\cup X)\leq r(F)+r(X)-r(F\cap X)\leq rt+r-1<r(t+1),$$
  so $r(F\cup X)$ would not be a multiple of $r$.  This contradicts
  criterion (a) since $\cl(F\cup X)$ would be a cyclic flat.  Thus,
  $N'$ has, up to relabeling, the same cyclic flats as $N$, so there
  is a bijection $\phi:E(N)\to E(N')$ that induces a size- and
  rank-preserving bijection from $\mathcal{Z}(N)$ onto
  $\mathcal{Z}(N')$, so $N$ and $N'$ are isomorphic.

  Since criteria (a) and (b) can be cast as optimizing valuative
  invariants and we showed that any matroid that satisfies these
  criteria is isomorphic to the semi-design $N$, this proves that $N$
  is extremal and $\mathcal{G}$-unique.
\end{proof}

The next infinite family of extremal semi-designs generalizes the
matroid in Figure \ref{fig:whirlgeneralization}, which is the case of
$k=1$ and $t=2$ in the notation below.

\begin{example}\label{ex:whirlgeneralization}
  Fix $k,t\in\mathbb{N}$ and sets $A,B,C,D$ of size $2k+t$ for which
  $A\cap C=\emptyset=B\cap D$, while $A\cap B$, $B\cap C$, $C\cap D$,
  and $D\cap A$ are (necessarily pairwise disjoint) $k$-sets.  Let the
  cyclic flats of the rank-$4k$ matroid $M$ on $A\cup B\cup C\cup D$
  be as given in Table \ref{tab:Msizerank}.

  \begin{table}[h!]
    \begin{center}
      \begin{tabular}{|r||c|c|c|c|}
        \hline
        cyclic flat
        &$\emptyset$&$A,\, B, \, C, \, D $&$A\cup B, \,  B\cup C, \,
                                            C\cup D, \, D\cup A$&$E(M)$\\ \hline
        size  &$0$&$2k+t$&$3k+2t$&$4k+4t$\\ \hline
        rank &0&$2k$&$3k$&$4k$\\ \hline
      \end{tabular}

      \vspace{4pt}

      \caption{The cyclic flats, with their sizes and ranks, for the
        matroid $M$.}\label{tab:Msizerank}
    \end{center}
  \end{table}
  
  \begin{figure}
    \centering
    \begin{tikzpicture}[scale=1]
      \node[inner sep = 0mm] (1) at (0,1) {};%
      \node[inner sep = 0mm] (1') at (1.45,1) {};%
      \node[inner sep = 0mm] (2) at (1.5,0.2) {};%
      \node[inner sep = 0mm] (3) at (3,1) {};%
      \node[inner sep = 0mm] (3') at (1.55,1) {};%
      \node[inner sep = 0mm] (4) at (1.5,2.25) {};%
      \foreach \from/\to in {1/2,1/4,2/3,3/4}
      \draw[thick](\from)--(\to);%

      \foreach \from/\to in {1/1',3'/3,2/4} \draw[gray](\from)--(\to);%

      \filldraw (0,1) circle (2.5pt);%
      \filldraw (3,1) circle (2.5pt);%
      \filldraw (1.5,0.2) circle (2.5pt);%
      \filldraw (1.5,2.25) circle (2.5pt);%
      \filldraw (2,0.46) circle (2.5pt);%
      \filldraw (2.5,0.73) circle (2.5pt);%
      \filldraw (1,0.46) circle (2.5pt);%
      \filldraw (0.5,0.73) circle (2.5pt);%
      \filldraw (0.5,1.41) circle (2.5pt);%
      \filldraw (1,1.82) circle (2.5pt);%
      \filldraw (2.5,1.41) circle (2.5pt);%
      \filldraw (2,1.82) circle (2.5pt);%
    \end{tikzpicture}
    \caption{The matroid in Example \ref{ex:whirlgeneralization} in
      the case of $k=1$ and $t=2$.  The sets $A$, $B$, $C$, and $D$
      are the four $4$-point lines in this rank-$4$ matroid.}
    \label{fig:whirlgeneralization}
  \end{figure}
  
  It is easy to check that properties (Z0)--(Z3) in Theorem
  \ref{thm:axioms} hold, so $M$ is a matroid.  Four rank-$2k$ flats
  are cyclic while all others are independent, so $M$ is not a perfect
  matroid design.  The dual is a matroid of the same type with the
  roles of $k$ and $t$ switched, so it too is not a perfect matroid
  design.  Table \ref{tab:Mdualsizerank} gives the cyclic flats and
  their sizes and ranks for $M^*$.  Here, $\overline{X}$ denotes the
  complement of $X$ in $E(M)$.

  \begin{table}[h!]
    \begin{center}
      \begin{tabular}{|r||c|c|c|c|}
        \hline
        cyclic flat
        &$\emptyset$& $\overline{A\cup B}, \,  \overline{B\cup C}, \,
         \overline{C\cup D}, \, \overline{D\cup A}$&$\overline{A},\,
         \overline{B}, \, \overline{C}, \, \overline{D}
          $&$E(M)$ \rule{0pt}{11pt}\\ \hline
        size  &$0$&$2t+k$&$3t+2k$&$4t+4k$\\ \hline
        rank &0&$2t$&$3t$&$4t$\\ \hline
      \end{tabular}

      \vspace{4pt}

      \caption{The cyclic flats, with their sizes and ranks, for the
        dual $M^*$.}\label{tab:Mdualsizerank}
    \end{center}
  \end{table}

  We note that the set
  $X=(A\cap B)\cup(B\cap C)\cup(C\cap D)\cup (D\cap A)$ is a basis of
  $M$ and that each cyclic flat of $M$ is spanned by a subset of $X$,
  so $M$ is a fundamental transversal matroid and $X$ is a fundamental
  basis.

  By construction, $M$ is a semi-design.  Below we show that among all
  rank-$4k$ matroids $N$ on $4k+4t$ elements for which the size and
  rank of any cyclic flat appears in Table \ref{tab:Msizerank}, the
  unique matroid that maximizes the valuative invariant $F(N;S_M)$ is
  $M$.  From this, it follows that $M$ is both extremal and
  $\mathcal{G}$-unique.  As Lemma \ref{lem:pushtocyclicflats}
  justifies, below we focus on pairs of cyclic flats $X\subset Y$ of
  ranks $2k$ and $3k$, respectively, instead of $F(N;S_M)$.
\end{example}

\begin{thm}\label{thm:gen4swirl}
  Fix $k,t\in\mathbb{N}$.  Let $N$ be a rank-$4k$ matroid on $4k+4t$
  elements in which the size and rank of any cyclic flat is one of the
  four options given in Table \ref{tab:Msizerank}.  Then $N$ has at
  most eight flags of cyclic flats $X\subset Y$ where $r(X)= 2k$ and
  $r(Y)=3k$, and if $N$ has eight such flags, then $N$ is isomorphic
  to $M$.  Also, $M$ is extremal and $\mathcal{G}$-unique.
\end{thm}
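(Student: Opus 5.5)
Let the rank-$2k$ cyclic flats of $N$ be called \emph{small} and the rank-$3k$ cyclic flats \emph{large}; small ones have size $2k+t$ and large ones have size $3k+2t$. The plan is to bound the number of flags $X\subset Y$ (small in large) by first constraining how two small flats can meet, then how small flats sit in large ones. Two different small flats $X,X'$ have $\cl(X\cup X')$ cyclic of rank greater than $2k$, hence of rank $3k$ or $4k$. If $X\cup X'$ spans a large flat $Y$, then, since $\eta$ is supermodular and $\eta(X)=\eta(X')=t$ and $\eta(Y)=2t$, we get $\eta(X\cap X')\ge 0$. That bound is trivial, so I would instead use rank: $r(X\cap X')\ge 4k-3k=k$. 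Moreover $X\cap X'$ is independent, because any cyclic flat properly inside a small flat is empty. Hence $|X\cap X'|\ge k$. The size count $|X\cup X'|\le 3k+2t$ gives $|X\cap X'|\ge 2(2k+t)-(3k+2t)=k$. Conversely, if $X\cup X'$ spans $E$, submodularity gives $r(X\cap X')\le 0$, so $X\cap X'=\emptyset$. Thus two small flats either are disjoint or span a common large flat and meet in at least $k$ elements.

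Next I bound the number of small flats inside a fixed large flat $Y$. Passing to $N|Y$, which has rank $3k$ on $3k+2t$ elements, any two small flats in $Y$ meet in at least $k$ elements. If three small flats $X_1,X_2,X_3$ lie in $Y$, I would use inclusion--exclusion on sizes together with the rank bound $r(X_1\cup X_2\cup X_3)\le 3k$ and independence of the pairwise intersections to derive a contradiction. For instance, $X_1\cup X_2$ already has $\eta=2t$ after taking closure, and adding $X_3\not\subseteq X_1\cup X_2$ would force nullity above $2t$ or an element outside $Y$. So each large flat contains at most two small flats. Dually (compare Corollary~\ref{cor:dualmax}, where the complements in $N^*$ play the roles of small and large with $k$ and $t$ swapped), each small flat lies in at most two large flats. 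I would also bound the number of small flats: these cyclic flats have nullity $t$ each, and they are arranged so that each meets at most two others. A total count via $|E|=4k+4t$ should give at most four small flats (for example, four small flats with neighbours sharing $k$ elements already use $4(2k+t)-4k$ elements), and hence at most eight flags. I expect this counting step, which must also rule out five or more small flats with few incidences, to be the main obstacle. The cleanest route is to bound the flags directly: flags number at most $2\cdot\#\text{large}$ and at most $2\cdot\#\text{small}$, and I would then bound the number of large flats by $4$ by applying the same disjointness argument to the small flats of $N^*$.

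For equality with eight flags, there must be four small and four large flats with each large flat containing exactly two small flats and each small flat in exactly two large flats. The incidence graph is then an $8$-cycle, giving small flats $A,B,C,D$ in cyclic order. Adjacent ones meet in exactly $k$ elements, by the size count $|Y|=3k+2t$, and opposite ones are disjoint. With $|E|=4k+4t$ this determines the configuration of Table~\ref{tab:Msizerank} up to relabeling. Any extra cyclic flat is excluded by hypothesis, so no other cyclic flats of these sizes exist beyond those counted, and Equation~(\ref{eq:cftorank}) gives $N\cong M$. Finally, the selection conditions are optimizations of valuative invariants, as in the note after Corollary~\ref{cor:dualmax}, with Lemma~\ref{lem:pushtocyclicflats} converting flag counts into $F(N;S_M)$. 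Theorem~\ref{thm:vertbyinv} then yields that $M$ is extremal, and uniqueness up to isomorphism gives $\mathcal{G}$-uniqueness.
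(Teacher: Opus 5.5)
There is a genuine gap exactly where you expect ``the main obstacle'': you never show that $N$ has at most four rank-$2k$ cyclic flats. Your two local bounds (each small flat lies in at most two large flats; each large flat contains at most two small flats) only bound the number of flags by twice the number of small flats. So the bound of eight, and your equality analysis (``there must be four small and four large flats''), both rest on this missing global count. Your ``cleanest route'' through $N^*$ only moves the same count to $N^*$.

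The paper supplies the count in three steps.
(a) A small flat $F$ meets at most two other small flats. Each such $F'$ gives a large flat $F\cup F'$ containing $F$, and distinct $F'$ give distinct large flats. Hence, among any four small flats, each is disjoint from another.
(b) If there are four small flats, no three of them are pairwise disjoint. Otherwise the fourth has at most $k$ elements in each of at most two of them and at most $t-2k$ elements outside their union, so at most $t$ elements.
(c) By (a), a fifth small flat $X$ would be disjoint from two of $A,B,C,D$, say $A$ and $D$. By (b), $A$ and $D$ meet in $k$ elements, so $|A\cup D\cup X|=5k+3t$. By (a), one of $B$ and $C$, say $C$, is disjoint from $A$. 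Since $C$ has at most $k$ elements in each of $D$ and $X$, it adds at least $t$ more, exceeding $4k+4t$.
With at most four small flats, each in at most two large flats, the bound of eight follows.

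These counts, and your local bounds, also need an exact intersection size that your first step does not deliver. Submodularity gives $r(X\cap X')\le r(X)+r(X')-r(X\cup X')=k$, not $\ge k$. Together with the independence of $X\cap X'$, this gives $|X\cap X'|\le k$. With your size count, $|X\cap X'|=k$ and $|X\cup X'|=3k+2t$, so $X\cup X'$ is itself the large flat $\cl(X\cup X')$.

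The size information alone (``at least $k$'') does not exclude three small flats inside one large flat. With equality, $X_1,X_2,X_3\subseteq Y$ forces $X_1\cup X_2=X_1\cup X_3=Y$. Then $Y-X_1\subseteq X_2\cap X_3$ has $k+t>k$ elements, a contradiction. The case $X_3\not\subseteq X_1\cup X_2$ that your sketch considers never arises.

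The same fact gives (a) and makes ``meet in $k$ elements'' in (b) and (c) exact. In the equality case, it shows that two large flats containing the same two small flats coincide, so the incidence graph cannot be two $4$-cycles. The paper argues this part differently: two large flats containing a small flat $F$ intersect exactly in $F$, since their union spans $E$. Three of them would then cover $5k+4t$ elements, and the paper then dualizes.

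Your closing step, casting the conditions as optimizations of valuative invariants, matches the paper.
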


\begin{proof}
  Let $N$ be as in the theorem.  Note that the size and rank of any
  cyclic flat of $N^*$ match one of the four options given in Table
  \ref{tab:Mdualsizerank}.  Thus, any intermediate result that we
  prove about cyclic flats of a certain rank in $N$ yields the
  counterpart of that result in $N^*$ with the roles of $k$ and $t$
  swapped, and by duality that gives a result in $N$ about cyclic
  flats of a different rank.  For example, once we show that any
  rank-$2k$ cyclic flat of $N$ is contained in at most two rank-$3k$
  cyclic flats of $N$, it follows that any rank-$2t$ cyclic flat of
  $N^*$ is contained in at most two rank-$3t$ cyclic flats of $N^*$,
  and by duality that means that any rank-$3k$ cyclic flat of $N$
  contains at most two rank-$2k$ cyclic flats of $N$.  We break the
  proof of the theorem into a sequence of structural properties of
  $N$.

  \begin{itemize}
  \item[(1)] \emph{Fix $F,F'\in\mathcal{Z}(N)$ of rank $2k$ with
      $F\ne F'$.  If $F\cap F'=\emptyset$, then $r(F\cup F')=4k$.  If
      $F\cap F'\ne \emptyset$, then $|F\cap F'|=k$ and $F\cup F'$ is a
      rank-$3k$ cyclic flat.}
  \end{itemize}

  To prove this, note that since $F,F'\in\mathcal{Z}(N)$, we have
  $\cl(F\cup F')\in\mathcal{Z}(N)$, and, since any cyclic flat has
  rank $0$, $2k$, $3k$, or $4k$, the rank of $F\cup F'$ is either $3k$
  or $4k$.  If $F\cap F'=\emptyset$, then $|F\cup F'| = 4k+2t$, which
  exceeds $3k+2t$, the size of the rank-$3k$ cyclic flats, so
  $r(F\cup F')=4k$.  Now assume that $F\cap F'\ne \emptyset$. The
  submodular inequality gives
  $$r(F\cup F')\leq r(F)+r(F') - r(F\cap F')<4k,$$ and so 
  $r(F\cup F')=3k$.  With that, the submodular inequality also gives
  $$r(F\cap F')\leq r(F)+r(F') - r(F\cup F')=2k+2k-3k=k.$$
  Now $|F\cup F'|\leq |\cl(F\cup F')| = 3k+2t$ since $\cl(F\cup F')$
  is a rank-$3k$ cyclic flat, so
  $$|F\cap F'|= |F|+|F'|-|F\cup F'| \geq 2(2k+t)-(3k+2t) = k.$$
  From the inequalities $r(F\cap F')\leq k$ and $|F\cap F'|\geq k$
  along with the observation that any set of rank less than $2k$ is
  independent in $N$, it follows that $|F\cap F'|= k$.  With that,
  inclusion/exclusion gives $|F\cup F'|= 3k+2t$, so $F\cup F'$ is the
  cyclic flat $\cl(F\cup F')$.

  \begin{itemize}
  \item[(2)] \emph{Fix distinct cyclic flats $F$, $G$, $G'$ for which
      $r(F)=2k$ and $r(G)=r(G')=3k$.  If $F\subset G$ and
      $F\subset G'$, then $F= G\cap G'$.}
  \end{itemize}
  
  For property (2), since $\cl(G\cup G')$ is a cyclic flat of rank
  greater than $3k$, we have $r(G\cup G') = 4k$.  With that, the
  submodular inequality gives
  $$r(G\cap G')\leq r(G)+r(G')-r(G\cup G') = 3k+3k-4k = 2k.$$
  Since $F\subseteq G\cap G'$ and $r(F)=2k$, we get $F=G\cap G'$.

  \begin{itemize}
  \item[(3)] \emph{A rank-$2k$ cyclic flat $F$ is a subset of at most
      two rank-$3k$ cyclic flats, and so $F$ has nonempty intersection
      with at most two other rank-$2k$ cyclic flats.}
  \end{itemize}
  
  For property (3), if $F$ were a subset of three different rank-$3k$
  cyclic flats, then, by property (2), the intersection of each pair
  of them is $F$, so their union would have size
  $2k+t+3(k+t) = 5k+4t$.  This is impossible since $|E(N)|=4k+4t$.
  Thus, the first part of property (3) holds.  The second part follows
  from that and property (1).

  As noted in the first paragraph, the next property follows from
  property (3) by duality.

  \begin{itemize}
  \item[(4)] \emph{At most two subsets of a rank-$3k$ cyclic flat are
      rank-$2k$ cyclic flats.}
  \item[(5)] \emph{Let $S$ be a set of four rank-$2k$ cyclic flats of
      $N$.  If $X\in S$, then there is a $Y\in S$ for which
      $r(X\cup Y)=4k$ and so $X\cap Y = \emptyset$.}
  \end{itemize}

  For property (5), let $S=\{X,U,V,W\}$.  Each of $\cl(X\cup U)$,
  $\cl(X\cup V)$, and $\cl(X\cup W)$ is a cyclic flat of rank either
  $3k$ or $4k$.  By property (3), at most two of them have rank
  $3k$. If, say, $r(X\cup U)=4k$, then $X\cap U = \emptyset$ by
  property (1).
  
  The upper bound on the number of flags of interest in the theorem is
  strict by property (3) if $N$ has fewer than four rank-$2k$ cyclic
  flats.  This justifies assumption (6).

  \begin{itemize}
  \item[(6)] \emph{We assume that $A$, $B$, $C$, and $D$ are rank-$2k$
      cyclic flats of $N$.}
  \item[(7)] \emph{No three rank-$2k$ cyclic flats are pairwise
      disjoint.}
  \end{itemize}

  If, contrary to property (7), the rank-$2k$ cyclic flats $A$, $B$,
  and $C$ were pairwise disjoint, then $|A\cup B\cup C| = 6k+3t$, and
  so $|\overline{A\cup B\cup C}| = t-2k$.  By property (3), the
  rank-$2k$ cyclic flat $D$ can have nonempty intersection with at
  most two of $A$, $B$, and $C$, and such intersections have size $k$
  by property (1).  That would give $|D|\leq 2k+t-2k=t$, contrary to
  the size of rank-$2k$ cyclic flats.  Thus, property (7) holds.

  \begin{itemize}
  \item[(8)] \emph{There are no rank-$2k$ cyclic flats besides $A$,
      $B$, $C$, and $D$.}
  \end{itemize}
  
  Assume to the contrary that $N$ has at least five rank-$2k$ cyclic
  flats, say $A$, $B$, $C$, $D$, and $X$.  By applying property (5) to
  $\{X,A,B,C\}$, we may assume that $X\cap A=\emptyset$.  By applying
  property (5) to $\{X,B,C,D\}$, we may assume that
  $X\cap D=\emptyset$.  Property (7) now gives $A\cap D\ne\emptyset$,
  so $|A\cap D|=k$ by property (1).  Therefore
  $|A\cup D\cup X| = 5k+3t$.  By applying property (5) to
  $\{A,B,C,D\}$ and knowing that $A\cap D\ne\emptyset$, we may assume
  that $A\cap C=\emptyset$.  However, since $C$ contains at most $k$
  elements in each of $D$ and $X$, we get
  $|A\cup D\cup X\cup C|\geq 5k+4t$, which is impossible since
  $|E(N)|=4k+4t$.

  By properties (3) and (8), there are at most eight flags
  $F\subset G$ of cyclic flats for which $r(F)=2k$ and $r(G)=3k$.  Now
  assume that $N$ has eight such flags.  Thus, each of the four
  rank-$2k$ cyclic flats is a subset of two rank-$3k$ cyclic flats.
  The corresponding conclusion in $N^*$, translated back to $N$, shows
  that there are four rank-$3k$ cyclic flats, each of which has two
  subsets that are rank-$2k$ cyclic flats.  From property (5), we may
  assume that $A\cap C=\emptyset$.  Therefore $A\cap B$, $A\cap D$,
  $C\cap B$, and $C\cap D$ are nonempty, so property (5) gives
  $B\cap D=\emptyset$.  Thus, the $k$-element sets $A\cap B$,
  $A\cap D$, $C\cap B$, and $C\cap D$ are pairwise disjoint.  With
  this, it follows that there is a bijection $\phi:E(M)\to E(N)$ that
  induces a rank-preserving bijection from $\mathcal{Z}(M)$ onto
  $\mathcal{Z}(N)$, and so $N$ is isomorphic to $M$.

  The conclusion that $M$ is $\mathcal{G}$-unique follows since all of
  the information used above is determined by valuative invariants.
\end{proof}  

In the next example, we construct a large class of fundamental
transversal semi-designs and then focus on a particular subclass that
we show to consist of extremal matroids that satisfy hypothesis (ii)
in Theorem \ref{thm:frpbeyondPMD}.  To take advantage of geometric
insight, we define these matroids using Brylawski's geometric
description of transversal matroids, which we reviewed in Section
\ref{subsec:petran}.  One can easily write down a presentation from
this.

\begin{example}\label{ex:rank4bicircular}
  Fix $r\in\mathbb{N}$, a nonempty subset $Q$ of $[r]$, and
  $a_i\in\mathbb{N}$ for each $i\in Q$.  Pick pairwise disjoint sets
  $B=\{b_1,b_2,\ldots,b_r\}$ and $A_T$, for each $T\subseteq [r]$ for
  which $|T|\in Q$, where $|A_T|=a_{|T|}$.  Start with the uniform
  matroid $U_{r,r}$ on $B$.  For each $T\subseteq[r]$ for which
  $|T|\in Q$, iterate the operation of principal extension to add all
  elements of $A_T$ to the flat spanned by $\{b_i\,:\,i\in T\}$.  Let
  the resulting matroid be $M$.  Note that $M$ is a fundamental
  transversal matroid: $B$ is a basis, and each cyclic flat of $M$ is,
  by construction, the closure of a subset of $B$.  To see that $M$ is
  a semi-design, note that if $h$ is the least element of $Q$, then
  $R_{\mathcal{Z}(M)} =\{0,h,h+1,\ldots,r\}$, and for
  $j\in R_{\mathcal{Z}(M)}-\{0\}$, the size of all rank-$j$ cyclic
  flats of $M$ is
  $$j+\sum_{i\in Q\,:\, i\leq j}\binom{j}{i}a_i,$$
  so all cyclic flats of the same rank have the same size, as property
  (SD1) requires.  Property (SD2) holds since
  $\binom{j+1}{i}>\binom{j}{i}$ for all $i,j\in\mathbb{N}$.

  The particular type of fundamental transversal semi-design that in
  Theorem \ref{thm:bic} we show to be extremal and to satisfy
  hypothesis (ii) in Theorem \ref{thm:frpbeyondPMD} is the bicircular
  matroid that we denote by $B_{r,t}$, which is the rank-$r$ matroid
  that the construction above produces when $Q=\{2\}$ and
  $a_2=t \in\mathbb{N}-\{1\}$.  Two bicircular matroids $B_{r,t}$ are
  shown in Figure \ref{fig:rank4bicircular}.  This is the bicircular
  matroid that one obtains from the graph on $r$ vertices that has one
  loop at each vertex and $t$ parallel edges between each pair of
  vertices. (See, e.g., \cite{oxley,bic} for general bicircular
  matroids.) The bicircular matroid $B_{r,t}$ has cyclic flats of each
  rank $j\in[0,r]-\{1\}$, and the rank-$j$ cyclic flats have
  $j+\binom{j}{2}t$ elements.
\end{example}

\begin{figure}
  \centering
    \begin{tikzpicture}[scale=1]
    \draw[thick](90:1.2)--(210:1.2)--(330:1.2)--(90:1.2);%

    \filldraw (90:1.2) circle (2.5pt);%
    \filldraw (210:1.2) circle (2.5pt);%
    \filldraw (330:1.2) circle (2.5pt);%

    \filldraw (350:0.78) circle (2.5pt);%
    \filldraw (30:0.6) circle (2.5pt);%
    \filldraw (70:0.78) circle (2.5pt);%

    \filldraw (110:0.78) circle (2.5pt);%
    \filldraw (150:0.6) circle (2.5pt);%
    \filldraw (190:0.78) circle (2.5pt);%

    \filldraw (230:0.78) circle (2.5pt);%
    \filldraw (270:0.6) circle (2.5pt);%
    \filldraw (310:0.78) circle (2.5pt);%

    \node at (0,-1.2) {$B_{3,3}$};%
  \end{tikzpicture}
  \hspace{1cm}
  \begin{tikzpicture}[scale=1]
    \node[inner sep = 0mm] (1) at (0,1) {};%
    \node[inner sep = 0mm] (1') at (1.45,1) {};%
    \node[inner sep = 0mm] (2) at (1.5,0.2) {};%
    \node[inner sep = 0mm] (3) at (3,1) {};%
    \node[inner sep = 0mm] (3') at (1.55,1) {};%
    \node[inner sep = 0mm] (4) at (1.5,2.25) {};%
    \foreach \from/\to in {1/2,1/1',3'/3,1/4,2/3,2/4,3/4}
    \draw[thick](\from)--(\to);%

    \filldraw (0,1) circle (2.5pt);%
    \filldraw (3,1) circle (2.5pt);%
    \filldraw (1.5,0.2) circle (2.5pt);%
    \filldraw (1.5,2.25) circle (2.5pt);%
    \filldraw (1.5,1.5) circle (2.5pt);%
    \filldraw (1.5,0.75) circle (2.5pt);%
    \filldraw (0.8,1) circle (2.5pt);%
    \filldraw (2.2,1) circle (2.5pt);%
    \filldraw (2,0.46) circle (2.5pt);%
    \filldraw (2.5,0.73) circle (2.5pt);%
    \filldraw (1,0.46) circle (2.5pt);%
    \filldraw (0.5,0.73) circle (2.5pt);%
    \filldraw (0.5,1.41) circle (2.5pt);%
    \filldraw (1,1.82) circle (2.5pt);%
    \filldraw (2.5,1.41) circle (2.5pt);%
    \filldraw (2,1.82) circle (2.5pt);%

    \node at (1.5,-0.3) {$B_{4,2}$};%
  \end{tikzpicture}
  \caption{The bicircular matroids $B_{3,3}$ and $B_{4,2}$ in Example
    \ref{ex:rank4bicircular}.}
  \label{fig:rank4bicircular}
\end{figure}

The next result is closely related to \cite[Theorems 6.2 and
6.3]{maxvalinv}, which show that Dowling matroids are extremal.  The
proofs share some common elements.

\begin{thm}\label{thm:bic}
  Fix integers $r\geq 3$ and $t\geq 2$.  Let $M$ be a simple rank-$r$
  semi-design on $r+\binom{r}{2}t$ elements for which, for each
  $h\in[r]-\{1\}$, rank-$h$ cyclic flats have $h+\binom{h}{2}t$
  elements.  Then $M$ has at most $r$ cyclic hyperplanes.
  Furthermore, the statements below are equivalent:
  \begin{itemize}
  \item[(i)] $M$ has $r$ cyclic hyperplanes,
  \item[(ii)] among rank-$r$ semi-designs $N$ on $r+\binom{r}{2}t$
    elements for which rank-$i$ cyclic flats have $i+\binom{i}{2}t$
    elements, $M$ maximizes the valuative invariant $F(N;S)$, where
    $S=S_{B_{r,t}}=\{ h+\binom{h}{2}t\,:\,h\in[0,r]\}$, and
  \item[(iii)] $M$ is isomorphic to $B_{r,t}$.
  \end{itemize}
  Thus, $B_{r,t}$ is extremal and $\mathcal{G}$-unique. 
\end{thm}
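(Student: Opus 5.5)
The plan is to induct on $r$, working throughout with cyclic flats and using Lemma \ref{lem:msubiext} to control the sizes of non-cyclic flats. Write $m_h=h+\binom{h}{2}t$ for $h\neq 1$ and $m_1=1$. Since $M$ is simple and a semi-design, these are the size parameters and $R_{\mathcal{Z}(M)}=[0,r]-\{1\}$. By Lemma \ref{lem:sdminor}, for a cyclic hyperplane $H$ the restriction $M|H$ satisfies the same hypotheses with $r-1$ in place of $r$. So, once the base case is settled, the inductive hypothesis says that $M|H$ has at most $r-1$ cyclic hyperplanes, with equality if and only if $M|H\cong B_{r-1,t}$.

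\emph{Step 1 (pairs of cyclic hyperplanes).} Let $H\neq H'$ be cyclic hyperplanes. By submodularity $r(H\cap H')\leq r-2$. A direct computation gives $2m_{r-1}-m_{r-2}=|E(M)|-t$, so
$$|H\cup H'|\geq |E(M)|-t.$$
Now suppose the flat $H\cap H'$ is not cyclic. Then Lemma \ref{lem:msubiext}(2) gives $|H\cap H'|\leq m_{r-3}+1$, which improves the bound by $m_{r-2}-m_{r-3}-1=(r-3)t$. For $r\geq 5$ this forces $|H\cup H'|>|E(M)|$, a contradiction. Hence for $r\geq 5$, $H\cap H'$ is a cyclic flat of rank $r-2$. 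The small ranks $r=3,4$ (and the base case $r=3$, where the cyclic flats are $(2+t)$-point lines in a rank-$3$ matroid on $3+3t$ points) I would treat by direct counting in the same style; I expect $r=4$ to need a separate, slightly finer argument.

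\emph{Step 2 (each rank-$(r-2)$ cyclic flat lies in at most two cyclic hyperplanes).} Let $G$ be a cyclic flat of rank $r-2$ and suppose $G\subset H_1,H_2,H_3$. By Step 1, $H_i\cap H_j=G$ for all $i\neq j$, so the sets $H_i-G$ are pairwise disjoint subsets of $E(M)-G$. Each has size $1+(r-2)t$, while $|E(M)-G|=2+(2r-3)t$. Three of them would need $3+3(r-2)t\leq 2+(2r-3)t$, which fails for $r\geq 3$.

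\emph{Step 3 (the bound).} Let $k$ be the number of cyclic hyperplanes. Each pair $\{H,H'\}$ yields the cyclic flat $G=H\cap H'$. By Step 2, $G$ lies in exactly these two hyperplanes, so distinct pairs give distinct $G$. Double count the incidences $(G,H)$ with $G\subset H$, where $G$ has rank $r-2$ and $H$ is a cyclic hyperplane. There are at least $2\binom{k}{2}$ of them, and by induction at most $k(r-1)$. Hence $k\leq r$.

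\emph{Step 4 (equivalences).} The implication (iii)$\Rightarrow$(i) is immediate from the graph description of $B_{r,t}$. For (i)$\Rightarrow$(iii), equality $k=r$ makes every count tight:
\begin{itemize}
\item each $M|H_i$ has $r-1$ cyclic hyperplanes, so $M|H_i\cong B_{r-1,t}$ by induction;
\item every rank-$(r-2)$ cyclic flat is $H_i\cap H_j$ for some $i\neq j$.
\end{itemize}
I would then reconstruct the structure. For each $i$, let $b_i$ be the unique element lying in no $H_j$ with $j\neq i$, and let $A_{\{i,j\}}$ be the $t$-set of elements lying in all $H_l$ with $l\notin\{i,j\}$. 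Every cyclic flat is an intersection of cyclic hyperplanes: the cyclic flats below $H_i$ are intersections of hyperplanes of $M|H_i$, and these in turn are intersections with the $H_j$. This gives a rank- and size-preserving bijection $\mathcal{Z}(M)\to\mathcal{Z}(B_{r,t})$ induced by a bijection of ground sets, so $M\cong B_{r,t}$. Making this bookkeeping clean is the step I expect to be the main obstacle.

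For (ii), Lemma \ref{lem:pushtocyclicflats} reduces maximizing $F(N;S)$ to maximizing $F(\mathcal{Z}(N))$, the number of chains of cyclic flats with one of each rank $0,2,3,\ldots,r$. Then
$$F(\mathcal{Z}(N))=\sum_H F(\mathcal{Z}(N|H))\leq k\cdot\frac{(r-1)!}{2}\leq\frac{r!}{2},$$
using induction for the first inequality and Step 3 for the second. Equality holds exactly when $k=r$ and each $N|H\cong B_{r-1,t}$, which is (i), and $B_{r,t}$ attains the bound. Finally, all selection criteria are valuative invariants: simplicity, the sizes of cyclic flats by rank, and maximizing $F(N;S)$. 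So Theorem \ref{thm:vertbyinv}, together with the uniqueness up to isomorphism just established, shows that $B_{r,t}$ is extremal and $\mathcal{G}$-unique.
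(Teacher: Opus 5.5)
Your outline follows the paper's proof closely. The shared steps are:
\begin{itemize}
\item induction on $r$, with $r=3,4$ treated separately;
\item the count showing that for $r\ge 5$ two cyclic hyperplanes meet in a rank-$(r-2)$ cyclic flat that lies in no third cyclic hyperplane;
\item the bound via the restrictions $M|H$ (your double count is equivalent to the paper's remark that the sets $H_i\cap H_u$, $i<u$, are $u-1$ distinct cyclic hyperplanes of $M|H_u$);
\item (i)$\Leftrightarrow$(ii) by summing over cyclic hyperplanes.
\end{itemize}

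The genuine gap is in (i)$\Rightarrow$(iii). Your reason for ``every cyclic flat is an intersection of cyclic hyperplanes'' only covers cyclic flats contained in some $H_i$. Your second bullet has the same blind spot: a rank-$(r-2)$ cyclic flat contained in no cyclic hyperplane contributes no incidences, so the tight count says nothing about it. Nothing you wrote excludes a proper cyclic flat $F$, of rank at most $r-2$, whose only cover in $\mathcal{Z}(M)$ is $E(M)$. Such an $F$ is invisible to the isomorphisms $M|H_i\cong B_{r-1,t}$, yet it would make $\mathcal{Z}(M)$ strictly larger than $\mathcal{Z}(B_{r,t})$, so your bijection does not follow. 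The paper's property (2d) handles exactly this, and it needs an extra idea, not just bookkeeping.

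\emph{The fix.} Set $\{b_i\}=\bigcap_{j\neq i}H_j$. Note that $b_i$ lies in \emph{every} $H_j$ with $j\neq i$, not in none of them. Also, $\bigcap_{l\notin\{i,j\}}H_l$ is the $(t+2)$-point line $\cl(\{b_i,b_j\})$, so $A_{\{i,j\}}$ should be this line minus $\{b_i,b_j\}$. Counting then shows that $E(M)$ is the union of the $\binom{r}{2}$ lines $\cl(\{b_i,b_j\})$.
\begin{enumerate}
\item Choose a cyclic flat $F$ of greatest rank that is not an intersection of some of the $H_j$. Then $r(F)\le r-2$.
\item Some $e\in F-\{b_1,\ldots,b_r\}$ lies on a line $\cl(\{b_i,b_j\})\not\subseteq F$. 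Otherwise $F$ would equal $\bigcap_{l\notin T}H_l$ with $T=\{i: b_i\in F\}$.
\item Hence $b_i\notin F$. Since $\cl(F\cup\{b_i\})$ is the join of $F$ and that cyclic line, it is a cyclic flat of rank $r(F)+1\le r-1$.
\item By maximality, this flat equals $\bigcap_{k\in K}H_k$ for some nonempty $K$. So $F$ lies in a cyclic hyperplane after all, and your argument for such flats gives a contradiction.
\end{enumerate}
The paper instead concludes that $F$ is one cyclic hyperplane too many for $M|\bigcap_{k\in K}H_k$. At $r=4$ it runs the same argument with $\cl(L\cup\{b_i\})$ for a cyclic line $L$.

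Separately, you leave the $r=4$ step undone, and your Step 1 does not reach it, because the estimate degenerates to $|H\cup H'|\ge|E(M)|$. The needed observation is this: if two cyclic planes meet in a $2$-point line, they cover $E(M)$. Then any other plane has at most $2(2+t)<3+3t$ elements, so there are only two cyclic planes. Otherwise every pair meets in a cyclic line, and your Steps 2--4 apply.
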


\begin{proof}
  By Lemma \ref{lem:msubiext}, for any rank-$h$ flat $F$ of $M$ that
  is not cyclic, $|F|\leq h+\binom{h-1}{2}t$.  Note that for any
  cyclic flat $F$ of $M$ of rank at least three, the restriction $M|F$
  satisfies the same hypothesis, using $r(F)$ in place of $r$.

  We first prove properties (1) and (2) below by induction on $r$.
  Property (1) is the bound on the number of cyclic hyperplanes in the
  theorem and property (2) provides the tools that we will use to
  prove the equivalence of statements (i)--(iii) in the theorem.
  \begin{itemize}
  \item[(1)] The matroid $M$ has at most $r$ cyclic hyperplanes.
  \item[(2)] If $M$ has $r$ cyclic hyperplanes $H_1,H_2,\ldots,H_r$
    and we let $F_I=\bigcap_{i\in I}H_i$ for each $I\subseteq[r]$,
    then
    \begin{itemize}
    \item[(2a)] $r(F_I)=r-|I|$, and if $I,J\subseteq[r]$ and $I\ne J$,
      then $F_I\ne F_J$,
    \item[(2b)] if $|I|\ne r-1$, then the flat $F_I$ is cyclic,
    \item[(2c)] the set $\{b_1,b_2,\ldots,b_r\}$, where
      $\bigcap_{k\in[r]-\{i\}}H_k=\{b_i\}$ for each $i\in[r]$, is a
      basis of $M$ and $E(M)$ is the union of the $\binom{r}{2}$ lines
      $\cl(\{b_i,b_j\})=F_{[r]-\{i,j\}}$, for $\{i,j\}\subsetneq [r]$,
      and
    \item[(2d)] the only nonempty cyclic flats of $M$ are the sets
      $F_I$, for $|I|\leq r-2$.
    \end{itemize}
  \end{itemize}
  In addition to the base case $r=3$, we treat $r=4$ separately since
  an inequality that we use for the induction step applies only for
  $r>4$.  Let $H_1,H_2,\ldots,H_u$ be the cyclic hyperplanes of $M$.
  
  First consider $r=3$, so $M$ has $3+3t$ elements and the cyclic
  hyperplanes $H_i$ are $(2+t)$-point lines.  We may assume that
  $u\geq 3$. If $H_i\cap H_j=\emptyset$ for some
  $\{i,j\}\subsetneq [3]$, then we would have
  $$|H_1\cup H_2\cup H_3|\geq 2(2+t)+t=4+3t,$$ which contradicts
  having $|E(M)|=3+3t$.  If $H_1\cap H_2\cap H_3\ne\emptyset$, then we
  would have
  $$|H_1\cup H_2\cup H_3|\geq 1+ 3(1+t)=4+3t,$$ so this too is
  impossible.  Thus, $|H_i\cap H_j|=1$, for all
  $\{i,j\}\subsetneq [3]$, and $H_1\cap H_2\cap H_3=\emptyset$, from
  which we get $|H_1\cup H_2\cup H_3|=3+3t$, and so
  $H_1\cup H_2\cup H_3=E(M)$.  Thus, any cyclic line contains at least
  two points in one of $H_1,H_2,H_3$ and so is that cyclic line, so
  $H_1,H_2,H_3$ are the only cyclic lines, which proves property (1).
  Also, we have checked all parts of property (2).
  
  Consider $r=4$, so $M$ has $4+6t$ elements and the cyclic
  hyperplanes $H_1,H_2,\ldots,H_u$ are $(3+3t)$-point planes.  Since
  $$|H_1\cap H_2|\geq |H_1|+|H_2|-|E(M)|=2,$$ the line
  $H_1\cap H_2$ has either two or $2+t$ elements.  First assume that
  $|H_1\cap H_2|= 2$.  Then $H_1\cup H_2=E(M)$, so for any other plane
  $P$, since $|P\cap H_i|\leq 2+t$ for $i\in[2]$, we have
  $|P|\leq 2(2+t)<3+3t$.  Thus, $H_1$ and $H_2$ would be the only
  cyclic planes.  We may now assume that the intersection of any two
  cyclic planes of $M$ is a cyclic line.  No cyclic line is in three
  cyclic planes, for otherwise the union of the three cyclic planes
  would contain $2+t+3(1+2t)=5+7t$ elements, but $|E(M)|=4+6t$.  Since
  any cyclic plane contains at most three cyclic lines, each of which
  is in at most one more cyclic plane, there are at most four cyclic
  planes, so property (1) holds when $r=4$.  Now assume that
  $H_1,H_2,H_3,H_4$ are the cyclic planes of $M$.  Using the notation
  $F_I$ defined in property (2), the argument above shows that
  $F_{\{i,j\}}$ is a cyclic line for all $\{i,j\}\subsetneq [4]$ and
  no two such lines are equal, so each $H_i$ contains three cyclic
  lines.  From property (2) for the restrictions $M|H_i$ (the case
  $r=3$), we now get that $F_{[4]}=\emptyset$ and that $F_{[4]-\{i\}}$
  is a singleton $\{b_i\}$ for all $i\in [4]$.  From property (2) for
  $M|H_1$, the set $\{b_2,b_3,b_4\}$ is a basis of $M|H_1$; thus
  $\{b_1,b_2,b_3,b_4\}$ is a basis of $M$ since $b_1\not\in H_1$.  The
  six cyclic lines $\cl(\{b_i,b_j\})$ are the intersections of pairs
  of cyclic planes.  Counting shows that the union of the six cyclic
  lines $\cl(\{b_i,b_j\})$, for $\{i,j\}\subsetneq [4]$, is $E(M)$.
  Finally, let $L$ be any cyclic line. If $b_i\in L$, then since the
  union of the cyclic planes that contain $b_i$ is $E(M)$, the line
  $L$ must be in one of these planes and hence be $\cl(\{b_i,b_j\})$
  for some $j\in[4]-\{i\}$.  Now assume that
  $L\cap \{b_1,b_2,b_3,b_4\}=\emptyset$.  Since $E(M)$ is the union of
  the six cyclic lines $\cl(\{b_i,b_j\})$, we may assume that
  $e\in L\cap \cl(\{b_i,b_j\})$.  Thus, $\cl(L\cup \{b_i\})$ is a
  cyclic plane not among $H_j$ for $j\in [4]$, contrary to property
  (1).  Thus, property (2) holds when $r=4$.

  Now assume that $r>4$ and that properties (1) and (2) hold for lower
  ranks.  We first show that if $ \{i,j\}\subseteq [u]$, then
  $F_{\{i,j\}}$ is a rank-$(r-2)$ cyclic flat.  Now
  $|H_i\cup H_j|\leq r+\binom{r}{2}t$, so
  \begin{align*}
    |F_{\{i,j\}}|
    &\,= |H_i|+|H_j|-|H_i\cup H_j|\\
    &\,\geq 2\Bigl(r-1+\binom{r-1}{2}t\Bigr)- \Bigr(
      r+\binom{r}{2}t\Bigl)\\
    &\,= r-2+\Bigl( 2\binom{r-1}{2}-\binom{r}{2}\Bigl)t \\
    &\,>  r-2+ \binom{r-3}{2}t
  \end{align*}
  (the final inequality requires $r>4$).  It follows that
  $F_{\{i,j\}}$ is a rank-$(r-2)$ cyclic flat since if
  $r(F_{\{i,j\}}) <r-2$, then
  $|F_{\{i,j\}}|\leq r-3 +\binom{r-3}{2}t$, while if
  $r(F_{\{i,j\}})=r-2$ but $F_{\{i,j\}}$ were not cyclic, then
  $|F_{\{i,j\}}|\leq r-2+\binom{r-3}{2}t$.

  The differences $H_i-F_{\{i,j\}}$ and $H_j-F_{\{i,j\}}$ each
  contain $$r-1+\binom{r-1}{2}t-(r-2)-\binom{r-2}{2}t =1+(r-2)t$$ of
  the $$r+\binom{r}{2}t -(r-2)-\binom{r-2}{2}t =2+(2r-3)t$$ elements
  in $E(M)-F_{\{i,j\}}$, so $H_i$ and $H_j$ are the only cyclic
  hyperplanes that contain $F_{\{i,j\}}$.  Thus, all $\binom{u}{2}$
  flats $F_{\{i,j\}}$, with $\{i,j\}\subseteq[u]$, are different.  In
  particular, $F_{\{i,u\}}$, for $i\in[u-1]$, are $u-1$ different
  cyclic hyperplanes of $M|H_u$, so the induction assumption applied
  to $M|H_u$ gives $u-1 \leq r-1$, so $u\leq r$, so property (1)
  holds.

  Now assume that $M$ has $r$ cyclic hyperplanes,
  $H_1,H_2,\ldots,H_r$.  The argument above shows that for each
  $i\in[r]$, the restriction $M|H_i$ has $r-1$ cyclic hyperplanes,
  namely, $F_{\{i,j\}}$ for each $j\in[r]-\{i\}$, so property (2)
  holds for $M|H_i$.  Properties (2a) and (2b) immediately follow for
  $M$. Property (2c) follows since $\{b_1,b_2,\ldots,b_{r-1}\}$ is a
  basis of $M|H_r$ by the induction hypothesis, and $b_r\not\in H_r$.
  To prove property (2d), if there were a cyclic flat that is not of
  the form $F_I$, then let $F$ be such a cyclic flat of greatest rank.
  Since $H_1,H_2,\ldots,H_r$ are the only cyclic hyperplanes in $M$,
  we have $r(F)\leq r-2$.  Since $F$ is not of the form $F_I$, there
  is an $e\in F-\{b_1,b_2,\ldots,b_r\}$, say with
  $e\in \cl_M(\{b_i,b_j\})$, for which
  $\cl_M(\{b_i,b_j\})\not\subseteq F$.  By the maximality assumption,
  the rank$(r(F)+1)$ cyclic flat $\cl_M(F\cup \{b_i\})$ is $F_K$ for
  some $K\subsetneq [r]$.  But the hyperplane $F$ of $M|F_K$
  contradicts property (1) for $M|F_K$.  Thus, property (2) holds.
  
  We now prove the equivalence of statements (i)--(iii).  Recall that
  $S=S_{B_{r,t}}$, that is, $S=\{ h+\binom{h}{2}t\,:\,h\in[0,r]\}$.
  Since $M$ has at most $r$ cyclic hyperplanes and the restriction of
  $M$ to a cyclic hyperplane satisfies the hypotheses of the theorem,
  with $r-1$ in place of $r$, it follows by induction that
  $F(M;S)\leq r!\cdot (t+2)/2$, and if $F(M;S)= r!\cdot (t+2)/2$, then
  $M$ has $r$ cyclic hyperplanes.  The equivalence of statements (i)
  and (ii) follows.  Statement (iii) implies statement (i) since
  $B_{r,t}$ has $r$ cyclic hyperplanes.  Now assume that statement (i)
  holds.  By what we showed in property (2), it follows that the
  cyclic flats of $M$ can be identified with those of $B_{r,t}$ by a
  bijection from $E(M)$ onto $E(B_{r,t})$ that preserves rank, so $M$
  is isomorphic to $B_{r,t}$.

  Since the invariants that are used to impose the conditions above
  are valuative and we deduced that any matroid that meets them is
  isomorphic to the bicircular matroid $B_{r,t}$, it follows that
  $B_{r,t}$ is extremal and $\mathcal{G}$-unique.
\end{proof}

\begin{center}
 \textsc{Acknowledgments}
\end{center}

\vspace{3pt}

The author is grateful to Christos Athanasiadis and Joel Lewis for
helpful comments that drew \cite{MK} to his attention, and to Alex
Fink for insights into Lemma \ref{lem:dproduct} via Euler
characteristics of chain complexes.

\end{document}